\colorlet{shadecolor}{blue!15}
\newtheorem{theorem}{Theorem}
\newtheorem{corollary}[theorem]{Corollary}
\newtheorem{lemma}[theorem]{Lemma}
\newtheorem{proposition}[theorem]{Proposition}
\newtheorem{remark}[theorem]{Remark}
\newcommand{\calA}{\mathcal{A}}
\newcommand{\calC}{\mathcal{C}}
\newcommand{\calE}{\mathcal{E}}
\newcommand{\calF}{\mathcal{F}}
\newcommand{\calG}{\mathcal{G}}
\newcommand{\calH}{\mathcal{H}}
\newcommand{\calL}{\mathcal{L}}
\newcommand{\calR}{\mathcal{R}}
\newcommand{\calT}{\mathcal{T}}
\newcommand{\calV}{\mathcal{V}}
\newcommand{\bbE}{\mathbb{E}}
\newcommand{\bbG}{\mathbb{G}}
\newcommand{\bbH}{\mathbb{H}}
\newcommand{\bbN}{\mathbb{N}}
\newcommand{\bbR}{\mathbb{R}}
\newcommand{\bbV}{\mathbb{V}}
\newcommand{\bbZ}{\mathbb{Z}}
\newcommand{\lr}[1][]{\xleftrightarrow{\: #1 \:\:}}
\newcommand{\mon}{\eqref{eq:4}}
\numberwithin{equation}{section}
\newcommand{\rk}[1]{\bgroup\color{red}%
  \par\medskip\hrule\smallskip%
  \noindent\textbf{#1}%
  \par\smallskip\hrule\medskip\egroup}
\title{Renormalization of crossing probabilities in the planar random-cluster model}
\author{Hugo Duminil-Copin\thanks{Universit\'e de Gen\`eve} \thanks{Institut des Hautes \'Etudes Scientifiques}, Vincent Tassion\thanks{ETH Zurich}}
\date{\today}
\begin{document}
\maketitle
 
\begin{abstract}
The study of crossing probabilities -- i.e.~probabilities of existence of paths crossing rectangles -- has been at the heart of the theory of two-dimensional percolation since its beginning. They may be used to prove a number of results on the model, including speed of mixing, tails of decay of the connectivity probabilities, scaling relations, etc. In this article, we develop a renormalization scheme for crossing probabilities in the two-dimensional random-cluster model. The outcome of the process is a precise description of an alternative between four behaviors: 
\begin{itemize}
\item {\em Subcritical:} Crossing probabilities, even with favorable boundary conditions, converge exponentially fast to 0. 
\item {\em Supercritical:} Crossing probabilities, even with unfavorable boundary conditions, converge exponentially fast to 1. 
\item {\em Critical discontinuous:} Crossing probabilities converge to 0 exponentially fast with unfavorable boundary conditions and to 1 with favorable boundary conditions.
\item {\em Critical continuous:} Crossing probabilities remain bounded away from 0 and 1 uniformly in the boundary conditions. 
\end{itemize}
The approach does not rely on self-duality, enabling it to apply in a much larger generality, including the random-cluster model on arbitrary graphs with sufficient symmetry, but also other models like certain random height models. 
\end{abstract}

\section{Introduction}

\subsection{Framework and Motivation}

In this article, we consider an infinite {\em biperiodic} (i.e.~invariant under the action of a $\mathbb{Z}^2$-isomorphic lattice) planar connected graph $\bbG$ with vertex-set $\bbV$ and edge-set $\bbE$. We embed the graph in such a way that $0$ is a vertex of $\bbG$ and translations by vectors $x\in\mathbb Z^2$ leave the graph invariant. The graph $\bbG$ is also assumed to be {\em invariant under $\pi/2$-rotations and reflections with respect to the $x$ and $y$ axis}.
 Below, $G$ will always refer to a finite subgraph of $\bbG$ with vertex-set $V$ and edge-set $E$. The {\em boundary} of $G$, denoted by $\partial G$, is the set of vertices in $G$ having at least one neighbor (in the sense of $\mathbb G$) outside $G$.
  
Percolation was introduced in the middle of the twentieth century to describe mathematically the inside of a porous material. While the model was originally motivated by an applied problem, it soon became a major object of interest in probability and mathematical physics. In a bond percolation model, each edge $e\in  E$ is either {\em open} or {\em closed}, a fact which is encoded by a function $\omega=(\omega_e:e\in E)$ from $E$ to $\{0,1\}$, where $\omega_e$ is equal to 1 if the edge $e$ is open, and $0$ if it is closed. A bond percolation model then consists in choosing edges of $G$ to be open or closed at random. 


The simplest and oldest model of bond percolation, called {\em Bernoulli percolation}, was introduced by Broadbent and Hammersley \cite{BroHam57}. In this model, each edge of $\mathbb G$ is open with probability $p$ in $[0,1]$ and therefore closed with probability $1-p$, independently of the state of other edges. Equivalently, the $\omega_e$ for $e\in \mathbb E$ are independent Bernoulli random variables of parameter $p$. This model has been intensively studied over the last sixty years, see e.g.~\cite{Gri99a}. While the theory of Bernoulli percolation still contains major open problems, it is fair to say that mathematicians are now in possession of a deep understanding of the model, especially in two dimensions (see \cite{BefDum13} and references therein). 

In recent years, more general percolation models appeared in various areas of statistical physics as natural models associated with other random systems (for instance spin models such as Ising and Potts models). While Bernoulli percolation is a product measure, the states of edges in these percolation models are typically not independent random variables. 

A large number of techniques developed for Bernoulli percolation do not extend to more general models, in particular due to the lack of independence. For this reason, the understanding of classical two-dimensional problems remained limited for more than thirty years, before improving in the last ten years.

The typical example of a dependent percolation model is provided by the random-cluster model, also called Fortuin-Kasteleyn percolation, which was introduced by Fortuin and Kasteleyn \cite{ForKas72} as a class of percolation models satisfying specific series and parallel laws. 
It is related to many other models of statistical mechanics, including the Potts model. 
We direct the reader to the monograph \cite{Gri06}  for background on the random-cluster model, and to the lecture notes \cite{Dum17a} for an exposition of the most recent results.
 
In order to define the model, we first consider a finite subgraph $G$ of $\mathbb G$.  The {\em boundary conditions} $\xi$ on $G$ are given by a partition of $\partial G$. Two vertices of $G$ are {\em wired together} if they belong to the same element of the partition $\xi$. The {\em free} (resp.\@ {\em wired}) boundary conditions, denoted by $\xi=0$ (resp. $\xi=1$) refer to boundary conditions in which no two (resp.~all) boundary vertices are wired together. 

The random-cluster measure on $G$ with edge-weight $p\in[0,1]$, cluster-weight $q>0$ and boundary conditions $\xi$ is given by
\begin{equation}\label{eq:RCM_def1}
	\phi_{G}^\xi[\omega]=\frac{1}{Z^\xi_G}\: \Big(\frac p{1-p}\Big)^{|\omega|}\:q^{k_\xi(\omega)},
\end{equation}
where $|\omega|$ is the number of open edges in $\omega$, $k_\xi(\omega)$ is the number of connected components of the graph obtained from $\omega$ by identifying wired vertices together, and finally $Z^\xi_G$ is a normalizing constant called the partition function, chosen in such a way that $\phi_{G}^\xi$ is a probability measure. 

There is a natural notion of infinite-volume random-cluster measure. More precisely, let $\mathcal B_{G,\xi}$ be the event that two vertices of $\partial G$ are connected to each others in the configuration outside $G$ if and only if they are in the same element of the partition $\xi$. A DLR-random-cluster measure $\phi$ on $\{0,1\}^\bbE$ is a measure satisfying
$$\phi[\:\cdot\,_{|E}\:|\:\mathcal B_{G,\xi}]=\phi_G^\xi$$
for every finite subgraph $G$ of $\bbG$ and every $\xi\in\{0,1\}^\bbE$ such that $\phi[\mathcal B_{G,\xi}]>0$.
One can construct DLR-random-cluster measures on $\mathbb G$ by taking the weak limit of measures on a sequence of finite subgraphs tending to $\mathbb G$. The two measures $\phi^0_{\bbG}$ and $\phi^1_{\bbG}$ denote the measures on $\mathbb G$ obtained by taking the weak limits of the random-cluster measures with free and wired boundary conditions respectively.

The theory of percolation in two dimensions relies heavily on the study of so-called crossing probabilities, i.e.~probabilities that rectangles contain a path of open edges crossing them (say from top to bottom, or from left to right). This study relies on two important (almost independent) pillars:
\begin{itemize}  
\item The first one, called the {\em RSW theory}, states that lower and upper bounds on crossing probabilities of rectangles of a certain aspect ratio imply similar bounds for crossing probabilities for rectangles of other aspect ratios.  The first result in this direction goes back to the seminal works of Russo \cite{Rus78} and Seymour and Welsh \cite{SeyWel78} (recently, several alternative proofs of the theorem have been obtained for Bernoulli percolation \cite{BolRio06,BolRio06c,BolRio10,Tas14b}).   The interest of this theorem is that it enables us to transfer lower bounds for crossing probabilities of very wide rectangles (which are usually easy to obtain) to lower bounds for crossing probabilities of very thin rectangles. By duality, it also enables one to transform upper bounds in thin rectangles into upper bounds in wide rectangles. This tool simplifies greatly the study of crossing probabilities, since one can choose the aspect ratio of the rectangles under consideration freely, and therefore adapt this choice to the problem at hand.

\item The second pillar is a \emph{renormalization of crossing probabilities}: by bounding the crossing probabilities at one scale in terms of the crossing probabilities at a lower scale, we obtain quantitative bounds on the crossing probabilities. Here, we make a slight abuse of terminology: in this context, we do not exhibit an exact renormalization flow (this would imply the existence of a scaling limit at criticality): we only work with inequalities that are sufficient to exhibit very explicit bounds on the crossing probabilities. While similar approaches were implemented for Bernoulli percolation in the past, the case of dependent percolation processes is substantially more subtle. Indeed, for Bernoulli percolation, renormalization inequalities are obtained using the fact that crossing probabilities in disjoint rectangles are independent. For dependent processes, crossing probabilities can be very different under different ``boundary conditions''. 
\end{itemize}

Some progress for the random-cluster model has been made in the two directions above. The RSW theorem was generalized to specific examples of dependent percolation models \cite{BefDum12}, and in weaker forms for very general models \cite{Tas14b}. The study of crossing probabilities that are uniform in boundary conditions was initiated in \cite{DumHonNol11} for the FK Ising model and then extended to critical random-cluster models with cluster-weight $q\ge1$ in \cite{DumSidTas14}. 

Despite this progress, the arguments developed so far are not usually working in a general framework and often rely on specific properties of the models that should a priori not be relevant to the problem at hand (the archetypical example would be exact integrability or exact self-duality of the model). In this article, we therefore propose robust arguments improving the understanding of the two aspects above:
\begin{itemize}
\item We prove a new RSW-result for the random-cluster model, without using exact self-duality at criticality.
\item We  perform two different types of renormalization procedures. The first one is very similar to the one of Bernoulli percolation (in fact it should maybe be called coarse graining rather than renormalization), and provides bounds on the crossing probabilities with favorable boundary conditions. The second one is new and allows us to study the effect of boundary conditions (in particular it provides bounds on the crossing probabilities with unfavorable boundary conditions).
\end{itemize}

Of course, we do not claim to tackle all percolation models of interest (see Section~\ref{sec:other models}), but we believe that this paper is a first step towards a comprehensive understanding of these issues.

\begin{figure}\begin{center}
\includegraphics[width=0.50\textwidth]{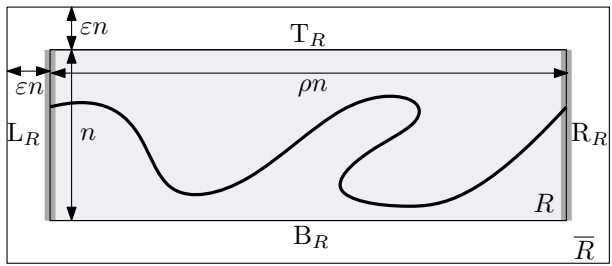}
\caption{\label{fig:1}The rectangles $R$ and $\overline R$ (this one is defined and used in Corollary~\ref{cor:z}), the different sides of $R$, as well as a crossing from left to right.}\end{center}
\end{figure}

\subsection{Russo-Seymour-Welsh theory}

As mentioned in the previous section, the study of crossing estimates relies on two pillars. We start by discussing the first one, namely the RSW theorem. 

For $n\ge1$, we define the box $\Lambda_n:= [-n,n]^2$ and the strip $S_n:=\mathbb R\times [-n,2n]$. We identify a subset $A$ of $\mathbb R^2$ with the subgraph of $\mathbb G$ induced by edges having at least one endpoint in $A$.
 For example, a rectangle $R:=[a,b]\times[c,d]$ is the subgraph of $\bbG$ induced by the edges with at least one endpoint in $R$; see Fig.~\ref{fig:1}. The quantities $b-a$ and $d-c$ are respectively called the {\em width} and the {\em height} of the rectangle.
 We denote by ${\rm T}_R$, ${\rm L}_R$, ${\rm B}_R$ and ${\rm R}_R$ the top, left, bottom and right sides of $R$. 
 
 The rectangle $R$ is {\em crossed horizontally}, denoted $\calH_R$, if $\omega\cap R$ contains a path of open edges (called {\em horizontal crossing}) from ${\rm L}_R$ to ${\rm R}_R$. Similarly, one defines the event that $R$ is {\em crossed vertically}, denoted $\calV_R$, if $\omega\cap R$ contains a path of open edges (called {\em vertical crossing}) from ${\rm B}_R$ to ${\rm T}_R$. 

 For $n\ge 1$, we consider three measures $\phi_{S_n}^0$, $\phi_{S_n}^1$ and $\phi_{S_n}^{0/1}$ in the infinite strip $S_n$ corresponding respectively to the measures with free (no boundary vertices are wired), wired (all boundary vertices are wired) and Dobrushin boundary conditions (all the vertices on the bottom of the strip are wired). These strip measures are formally defined in Section~\ref{sec:0}.

 The first result is the following.

\begin{theorem}[RSW]\label{thm:RSW}
Fix $p\in[0,1]$ and $q\ge1$. For every $\rho\ge1$, there exists an increasing homeomorphism $f=f_\rho$ of $[0,1]$ such that for every $n\ge1$,
\begin{equation}
\phi[\calH_{[0,\rho n]\times[0,n]}]\ge f(\phi[\calV_{[0,\rho n]\times[0,n]}]),
\end{equation}
where  $\phi$  can be either $\phi^\xi_{\bbG}$ with $\xi\in\{0,1\}$ or $\phi_{S_m}^\xi$ for some $m\ge n$ and $\xi\in\{0,1,0/1\}$.\end{theorem}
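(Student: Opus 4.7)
The plan is to adapt the modern RSW machinery, in the spirit of Tassion \cite{Tas14b}, to the random-cluster model. The tools at our disposal are: the FKG inequality (valid for $q\ge 1$), monotonicity in boundary conditions (free $\preceq$ Dobrushin $\preceq$ wired), and the symmetries of $\bbG$ (invariance under $\pi/2$-rotations and under reflections in the coordinate axes). The overall strategy is a gluing argument: a horizontal crossing of a long rectangle is produced by concatenating vertical and horizontal crossings of a bounded number of rectangles of aspect ratio close to $1$, with FKG quantifying the cost of each gluing.

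\textbf{Base case ($\rho=1$).} For the full-plane measures $\phi^0_\bbG$ and $\phi^1_\bbG$, the $\pi/2$-rotation invariance of $\bbG$ and of the measures yields $\phi[\calH_{[0,n]^2}]=\phi[\calV_{[0,n]^2}]$, so one can take $f_1=\mathrm{id}$. For the strip measures $\phi_{S_m}^\xi$, rotational symmetry is broken, and we instead combine the horizontal reflection symmetry with the stochastic comparisons $\phi_\bbG^0\preceq\phi_{S_m}^0$ and $\phi_{S_m}^1\preceq\phi_\bbG^1$, together with $\phi_{S_m}^0\preceq \phi_{S_m}^{0/1}\preceq \phi_{S_m}^1$, to transfer the square estimate from the full-plane setting to all strip measures.

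\textbf{Induction on $\rho$.} Passing from aspect ratio $\rho$ to $2\rho$ is achieved by covering the $2\rho n\times n$ rectangle by two overlapping $\rho n\times n$ rectangles together with a square in their overlap. A horizontal crossing of each copy (supplied by the inductive hypothesis applied in the vertical-to-horizontal direction) is glued to a vertical crossing of the overlap square via the FKG inequality. The point is that the gluing event is produced from events depending only on crossings, so one picks up a factor of the form $f_\rho(\cdot)^2\cdot g(\cdot)$, which defines $f_{2\rho}$ as another increasing homeomorphism. A Tassion-type ``pivot'' argument, locating an angle $\alpha^*$ inside the square where the probability of a crossing from the right side to $[0,\alpha^*]\times\{0\}$ is comparable to that to the entire bottom, handles the initial nontrivial ratio (say $\rho=2$) without relying on self-duality: reflecting across the vertical axis through $\alpha^*$ and using FKG gives the required doubling.

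\textbf{Main obstacle.} The delicate step is uniformity over boundary conditions, and in particular the Dobrushin case. Since the wired bottom of $S_m$ biases the measure toward vertical crossings, a large value of $\phi_{S_m}^{0/1}[\calV_{[0,\rho n]\times[0,n]}]$ does not immediately translate into a large horizontal crossing probability. The plan is to exploit monotonicity to dominate $\phi_{S_m}^{0/1}$ from below by a suitable free measure after conditioning on a dual barrier separating the rectangle from the wired side, and to dominate it from above by $\phi_\bbG^1$ to keep the pivot construction under control. Carefully threading these comparisons through the inductive step so that the same homeomorphism $f_\rho$ serves all listed measures is where most of the work lies; once this is done, iterating yields $f_\rho$ for arbitrary $\rho\ge 1$.
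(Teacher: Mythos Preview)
Your proposal has a genuine gap, and it lies exactly where you flag it: the strip measures. The two stochastic comparisons you write are in the wrong direction (one has $\phi_{S_m}^0\preceq\phi_\bbG^0$ and $\phi_\bbG^1\preceq\phi_{S_m}^1$, not the reverse), but even with the correct inequalities the transfer fails: knowing $\phi_{S_m}^0[\calV]\le\phi_\bbG^0[\calV]=\phi_\bbG^0[\calH]\ge\phi_{S_m}^0[\calH]$ gives no lower bound on $\phi_{S_m}^0[\calH]$ in terms of $\phi_{S_m}^0[\calV]$. More seriously, the Tassion pivot relies on the $\pi/2$-rotation invariance of the \emph{measure} $\phi$ to seed the argument (comparing left-to-bottom with bottom-to-left connection probabilities); the strip measures, and in particular $\phi_{S_m}^{0/1}$, simply do not have this symmetry. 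Your proposed fix --- conditioning on a dual barrier to decouple from the wired side --- is circular: producing such a barrier with controlled probability is precisely a dual horizontal crossing estimate, i.e.\ the very thing you are trying to prove.

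The paper's route avoids this entirely and is worth contrasting. It never needs $\phi$ to be rotation invariant. Instead it proves a one-scale inequality $\phi[\calH_R]\ge c_0\,\phi[\calV_R]^{1/c_0}$ directly, by (i) extracting from $\calV_R$ a connection from a short bottom segment $S_j$ to one of the sides of a nearby tall rectangle, (ii) exploring the left-most and right-most such crossings $\Gamma_1,\Gamma_2$, and (iii) trapping a third crossing between them inside an auxiliary \emph{symmetric} finite domain $\mathrm{Sym}$. The crucial point is that rotation and reflection are applied only to the \emph{finite-volume} measure $\phi_{\mathrm{Sym}}^{\mathrm{mix}}$, where the full symmetry of $\bbG$ is available regardless of which ambient $\phi$ one started from. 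The key inequality $\phi_{\mathrm{Sym}}^{*\text{-}\mathrm{mix}}\le q\,\phi_{\mathrm{Sym}}^{\mathrm{mix}}$ (a single-cluster count) then shows that the probability of bridging $\Gamma_1$ to $\Gamma_2$ dominates, up to a factor $q$, the probability that the third crossing avoids both --- this replaces self-duality. The argument uses only horizontal translation and vertical reflection of $\phi$, which all the listed measures possess, so no separate treatment of the Dobrushin case is needed. The near-$1$ behavior of $f_\rho$ is then obtained by applying the same inequality to the dual model.
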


Let us make a few concluding remarks on this theorem. 
The choices of $\phi$ are made in such a way that the measures are invariant by translation under the vector $(1,0)$, but finite volume versions of the theorem can be deduced from the same argument. 
 The homeomorphism can be taken to behave like $(x/c\rho^3)^{c\rho}$ for small values of $x$, and $1-c\rho^3(1-x)^{c\rho}$ for values of $x$ close to 1, where $c$ is independent of $\rho$. This fact can be checked by following  the constants carefully through the proof.

\subsection{Renormalization of crossing probabilities}\label{sec:renormalization}

The previous theorem has an important consequence: estimates which are valid for crossing probabilities  in the easy direction can be transferred to estimates on crossing probabilities in the hard direction. 
What the previous theorem does not answer is the possibility that crossing probabilities depend drastically on boundary conditions. In other words, do we have any estimate to start with?

The next theorem states that four different possibilities can happen. Since we know from the RSW theorem that estimates on crossing probabilities can easily be transferred between rectangles, we state the result with the simplest rectangle of all, namely the square box $\Lambda_n$.

\begin{theorem}[Quadrichotomy for crossing probabilities]\label{thm:main}
Fix $p\in[0,1]$ and $q\ge1$. Then, there exists $c>0$ such that one of the following four properties is satisfied:
\begin{description}
\item[(SubCrit)] For every $n\ge1$, $\phi_{\Lambda_{2n}}^1[\calH_{\Lambda_n}]\le \exp(-cn)$;
\item[(SupCrit)] For every $n\ge1$, $\phi_{\Lambda_{2n}}^0[\calH_{\Lambda_n}]\ge 1-\exp(-cn)$;
\item[(ContCrit)] For every $n\ge1$ and every boundary conditions $\xi$, 
$c\le \phi_{\Lambda_{2n}}^\xi[\calH_{\Lambda_n}]\le 1-c;$
\item[(DiscontCrit)] For every $n\ge 1$, 
$\phi_{\Lambda_{2n}}^0[\calH_{\Lambda_n}]\le \exp(-cn)\text{ and }\phi_{\Lambda_{2n}}^1[\calH_{\Lambda_n}]\ge 1-\exp(-cn).$
\end{description}
\end{theorem}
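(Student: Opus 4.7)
The plan is to reduce the four-way classification to the joint behavior of the two extremal crossing probabilities $a_n := \phi_{\Lambda_{2n}}^1[\calH_{\Lambda_n}]$ and $b_n := \phi_{\Lambda_{2n}}^0[\calH_{\Lambda_n}]$. Since the random-cluster model with $q\ge 1$ is monotone in boundary conditions, every intermediate $\xi$ satisfies $b_n \le \phi_{\Lambda_{2n}}^\xi[\calH_{\Lambda_n}] \le a_n$. The four cases of the theorem then correspond to: SubCrit iff $a_n$ decays exponentially; SupCrit iff $b_n$ is exponentially close to $1$; ContCrit iff $(a_n,b_n)$ lies uniformly in $[c,1-c]^2$ (with $b_n\le a_n$); and DiscontCrit iff $a_n\to 1$ and $b_n\to 0$, both exponentially fast.

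The first step is a classical coarse-graining: I would establish constants $\eps_0>0$, $\alpha>1$, $\delta>0$ such that if at some single scale $N$ one has $a_N\le \eps_0$, then $a_{\alpha N}\le a_N^{1+\delta}$, which iterates to exponential decay and hence SubCrit; and symmetrically, if $1-b_N\le \eps_0$ at some scale, then $1-b_{\alpha N}\le (1-b_N)^{1+\delta}$, yielding SupCrit. The recipe uses Theorem~\ref{thm:RSW} to inflate a square-box estimate into a long-rectangle estimate carrying the same type of boundary conditions, then combines many disjoint long rectangles into a crossing at the next scale via FKG together with comparison between boundary conditions. Since these arguments work under a single measure and only need disjointness, they adapt nearly verbatim from the Bernoulli setting.

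Assume neither trigger fires, so $a_n\ge \eps_0$ and $b_n\le 1-\eps_0$ for all $n$. The delicate content of the theorem is then to dichotomize this intermediate regime into ContCrit versus DiscontCrit, and this requires the second, genuinely new renormalization scheme announced in the introduction, whose role is to control how much the boundary conditions still affect crossings at a larger scale. Concretely, I would prove: if there exists a scale $N$ with simultaneously $b_N\ge \eta$ and $a_N\le 1-\eta$ for some $\eta>0$, then such bounds (possibly with smaller constants) propagate to all scales, giving ContCrit; otherwise, at every large scale one has $a_n\ge 1-\eta$ and $b_n\le \eta$, and a renormalization inequality that amplifies the gap between the wired and free measures produces the joint exponential rates $b_n \le e^{-cn}$ and $1-a_n \le e^{-cn}$, i.e.\ DiscontCrit. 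This last step also needs to rule out the \emph{a priori} possible mixed regimes $a_n\le 1-c$ with $b_n\to 0$ exponentially, and $a_n\to 1$ exponentially with $b_n\ge c$.

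The main obstacle is exactly this boundary-condition renormalization. Unlike the first step, which handles disjoint events under a single measure, this step must compare the wired and free measures on the same region across many scales, and must quantify how a small discrepancy $a_N-b_N$ is transmitted upward to yield either persistence of the intermediate regime or exponential amplification of the extremal one, all without appeal to planar self-duality. This is precisely the novel ingredient the paper must supply; once it is in place, the quadrichotomy is assembled by inspecting which of the four triggers (small $a_N$ at a scale, large $b_N$ at a scale, an intermediate pair $(a_N,b_N)$ at a scale, or the persistent extremal regime) occurs.
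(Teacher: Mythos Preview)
Your overall architecture is right: reduce to the extremal pair $(a_n,b_n)$, trigger \textbf{(SubCrit)}/\textbf{(SupCrit)} by a coarse-graining argument if either extremal probability is ever very small/large (this is exactly the content of Lemma~\ref{lem:a} and Corollary~\ref{cor:1}), and otherwise dichotomize the remaining regime. You also correctly identify that the hard part is the second renormalization, controlling the effect of boundary conditions without self-duality.

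The gap is that you do not have a concrete mechanism for this second renormalization, and the one you gesture at---propagating bounds on $(a_n,b_n)$ directly from one scale to the next---is not what the paper does and is not obviously workable. The difficulty with renormalizing $b_n=\phi_{\Lambda_{2n}}^0[\calH_{\Lambda_n}]$ directly is that there is no clean multiplicative structure: combining several crossings at scale $n$ into one at scale $3n$ under \emph{free} boundary conditions does not factor, because the conditioning on one crossing spoils the free boundary conditions seen by the next.

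The paper's key idea is to abandon $(a_n,b_n)$ in favor of the \emph{strip densities}
\[
p_n=\limsup_{\alpha\to\infty}\phi^0_{[0,\alpha n]\times[-n,2n]}[\calH_{[0,\alpha n]\times[0,n]}]^{1/\alpha},
\qquad
q_n=\limsup_{\alpha\to\infty}\phi^1_{[0,\alpha n]\times[-n,2n]}[\calV_{[0,\alpha n]\times[0,n]}^c]^{1/\alpha},
\]
which measure the exponential cost per unit length of a long crossing and are naturally multiplicative. Two ingredients drive the argument: (i) a comparison $p_{3n}\ge \lambda^{-C}q_n^{3+3/\lambda}$ (and dually) showing $p_n$ and $q_n$ have the same qualitative behavior even without self-duality; (ii) a \emph{Pushing Lemma} that, inside a long strip, lets one bring free (or dual-free) boundary conditions from distance $3n$ down to distance $n/9$ at bounded exponential cost. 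Combining these yields the renormalization inequality $p_{3n}\le \lambda^C p_n^{\,3-9/\lambda}$ (and dually for $q_n$), which forces the dichotomy: either $p_n$ decays exponentially along $3^i$, or $\inf_n p_n>0$. The first alternative gives \textbf{(DiscontCrit)} (and, via (i), the same for $q_n$); the second gives \textbf{(ContCrit)} after converting strip-density bounds back to box-crossing bounds. This also automatically rules out the mixed regimes you worried about, since $p_n$ and $q_n$ are tied together by (i). Your outline is missing precisely this passage through $p_n,q_n$ and the Pushing Lemma; without them the second renormalization remains a black box.
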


In the first two items, we could have stated the result with ``for all boundary conditions'' since, by comparison between boundary conditions (see Section~\ref{sec:0}), the statement of the theorem implies the estimate for arbitrary boundary conditions. The third property is equivalent to the following statement (which actually follows from the proof of the theorem), which is often referred to as the {\bf box-crossing property}.
\begin{corollary}\label{cor:z}
When {\bf (ContCrit)} occurs, then for every $\rho>0$, there exists $c_\rho>0$ such that for every $n\ge1$ and every boundary conditions $\xi$, 
\begin{equation}\label{eq:zz}c_\rho\le \phi_{[-n,(\rho+1)n]\times[-n,2n]}^\xi[\calH_{[0,\rho n]\times[0,n]}]\le 1-c_\rho.\end{equation}
\end{corollary}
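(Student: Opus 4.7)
The plan is to combine \textbf{(ContCrit)} (through the DLR equation) with a finite-volume version of Theorem~\ref{thm:RSW}. Write $D=[-n,(\rho+1)n]\times[-n,2n]$ and $R=[0,\rho n]\times[0,n]$, and assume $\rho\ge 1$ for definiteness (the case $\rho<1$ follows by exchanging the roles of $\calH$ and $\calV$ using the $\pi/2$-rotation symmetry of $\bbG$). Fix a sub-square $S$ of $R$ of side $n$ sharing the top and bottom of $R$ (so $S$ is a translate of $\Lambda_{n/2}$), with doubling $\tilde S$ (a translate of $\Lambda_n$) contained in $D$. Conditioning any measure $\phi^\xi_D$ on the configuration outside $\tilde S$, the DLR equation presents the conditional law as $\phi^\eta_{\tilde S}$ for some random partition $\eta$ of $\partial\tilde S$, to which \textbf{(ContCrit)} applies uniformly. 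Hence, for every $\xi$,
\[
c\le \phi^\xi_D[\calH_S],\ \phi^\xi_D[\calV_S]\le 1-c.
\]

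For the upper bound, because $S$ and $R$ share their top and bottom sides, any horizontal crossing of $R$ restricts to a horizontal crossing of $S$ (take the portion between its last visit to $\{x=0\}$ and its first subsequent visit to $\{x=n\}$); thus $\calH_R\subseteq \calH_S$ and $\phi^\xi_D[\calH_R]\le \phi^\xi_D[\calH_S]\le 1-c$ uniformly in $\xi$. For the lower bound, the same sharing yields $\calV_S\subseteq \calV_R$, so $\phi^0_D[\calV_R]\ge c$. The domain $D$ and the rectangle $R$ are both invariant under reflection across $\{x=\rho n/2\}$ and $\{y=n/2\}$, which are symmetries of $\bbG$; consequently $\phi^0_D$ inherits these symmetries. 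Applying the finite-volume version of Theorem~\ref{thm:RSW} to $\phi^0_D$ then gives
\[
\phi^0_D[\calH_R]\ge f_\rho\bigl(\phi^0_D[\calV_R]\bigr)\ge f_\rho(c),
\]
and comparison between boundary conditions for $q\ge 1$ yields $\phi^\xi_D[\calH_R]\ge \phi^0_D[\calH_R]\ge f_\rho(c)$ for every $\xi$. Setting $c_\rho:=\min\bigl(c,f_\rho(c)\bigr)$ completes the proof.

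The hard part will be justifying the finite-volume variant of Theorem~\ref{thm:RSW} applied to $\phi^0_D$. The statement given in the paper concerns measures that are translation-invariant in the horizontal direction (strip and infinite-volume measures), and the finite-volume version is only alluded to parenthetically. Carrying out the RSW argument directly in $D$ requires checking that all auxiliary rectangles, translated configurations, and reflected crossings used in the proof stay inside $D$, and that the two reflection symmetries of $\phi^0_D$ (combined with FKG) suffice to replace the horizontal translation invariance available in the strip/infinite-volume setting.
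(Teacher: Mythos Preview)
Your upper bound via $\calH_R\subset\calH_S$ and the DLR reduction is correct, and so is the lower bound $\phi^0_D[\calV_R]\ge c$. The gap you flag is, however, a real obstruction and not a routine check: the two reflections of $D$ do \emph{not} suffice to replace the horizontal translation invariance that the RSW argument of Section~\ref{sec:2} actually uses. Translation invariance enters twice in an essential way. First, after the union bound \eqref{eq:p} one needs the same lower bound for the three shifted events $\calC_j,\calC_{j+2},\calC_{j+4}$ simultaneously in order to apply \eqref{eq:FKG} and get $\phi[\calC_j\cap\calC_{j+2}\cap\calC_{j+4}]\ge(\phi[\calV_R]/6C)^3$; the union bound alone only gives a good segment for \emph{one} index. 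Second, the chaining step \eqref{eq:8} requires a uniform lower bound on $\phi[\calA_j]$ for \emph{every} $j$ in a range of length $O(\rho)$. Two reflections transport estimates between at most four positions; the remaining $O(\rho)$ positions are genuinely inequivalent for $\phi^0_D$. (There is also the minor issue that $\{x=\rho n/2\}$ and $\{y=n/2\}$ are graph symmetries only when $\rho n$ and $n$ are even.)

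The paper avoids this entirely by not deducing Corollary~\ref{cor:z} from {\bf(ContCrit)} at all. Instead, both are derived in Section~\ref{sec:proof_Thm} from the strip-density condition $\inf_n p_n>0$ (case (ii) in the dichotomy of Lemma~\ref{lem:3}). The lower bound in \eqref{eq:zz} is obtained by a super-multiplicativity argument in the \emph{infinite} strip, where translation invariance is available, giving $\phi_{\overline R}^{\rm mix}[\calH_R]\ge p_n^{\rho+2}$; the wired arcs of the mix boundary conditions are then ``cut off'' by dual crossings in two flanking rectangles $K,K'$ at cost $q_{n/3}^{18}$, which yields the free-boundary estimate by \mon. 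No finite-volume RSW is invoked. If you want to keep a direct implication {\bf(ContCrit)}\,$\Rightarrow$\,\eqref{eq:zz}, you would need a genuinely different RSW input --- for instance one in the spirit of \cite{Tas14b}, whose argument does not rely on translation invariance --- or else pass through the equivalence {\bf(ContCrit)}\,$\Leftrightarrow$\,(ii) established in the proof of Theorem~\ref{thm:main}.
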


The proof of the theorem relies on a renormalization on so-called strip densities for crossings. The argument is novel and should be very useful in the study of other models. The proof of these results relied on a renormalization scheme using the monotonicity and spatial Markov properties of the model, together with the self-duality that the model enjoys on the square lattice at criticality. The renormalization scheme is both simpler and more robust. In particular, {\em it does not rely on self-duality at criticality}.

We voluntarily mentioned the previous theorem without relation to the phase diagram of the random-cluster model (again, see \cite{Gri06,Dum17a} for details). The motivation comes from future applications which could deal with models without specific parameters. Nonetheless, in our case, the random-cluster model with $q\ge1$ undergoes a phase transition at a critical parameter $p_c(q)$ defined by the property that for every $p\ne p_c(q)$, $\phi^1_{\bbG}=\phi^0_{\bbG}=:\phi_{\bbG}$ and that the $\phi_{\mathbb G}$ probability that there exists an infinite connected component is zero if $p<p_c(q)$ and is one if $p>p_c(q)$. In the course of the proof, we will derive the following corollary.

\begin{corollary}\label{cor:main} The function $q\mapsto p_c(q)$ is continuous on $[1,+\infty)$ and
\begin{itemize}[noitemsep,nolistsep] 
\item if $p<p_c(q)$, then {\bf (SubCrit)} occurs, 
\item if $p>p_c(q)$, then {\bf (SupCrit)} occurs,
\item if $p=p_c(q)$, then  {\bf (ContCrit)} or {\bf (DiscontCrit)} occurs. Furthermore, the set of $q\ge1$ for which {\bf (DiscontCrit)} occurs at $p_c(q)$ is open in $[1,+\infty)$.
\end{itemize}
\end{corollary}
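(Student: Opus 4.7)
The plan is to combine the quadrichotomy of Theorem~\ref{thm:main} with standard properties of the random-cluster model at $q\ge 1$: monotonicity in the domain giving $\phi^0_{\Lambda_N}\uparrow\phi^0_{\bbG}$ and $\phi^1_{\Lambda_N}\downarrow\phi^1_{\bbG}$ on increasing events; the uniqueness $\phi^0_{\bbG}=\phi^1_{\bbG}$ at every $p\neq p_c(q)$; sharpness of the subcritical phase (uniform in the volume); Zhang's argument for the uniqueness of the infinite cluster; and joint continuity in $(p,q)$ of finite-volume crossing probabilities.

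The heart of the proof is to identify each alternative of Theorem~\ref{thm:main} with the position of $p$ relative to $p_c(q)$. In \textbf{(SubCrit)}, monotonicity gives $\phi^1_{\bbG}[\calH_{\Lambda_n}]\le \phi^1_{\Lambda_{2n}}[\calH_{\Lambda_n}]\le e^{-cn}$, which excludes an infinite cluster in $\phi^1_{\bbG}$ and forces $p\le p_c(q)$. In \textbf{(SupCrit)}, monotonicity gives $\phi^0_{\bbG}[\calH_{\Lambda_n}]\ge 1-e^{-cn}$; combining horizontal and vertical crossings via planarity and applying Borel--Cantelli along dyadic annuli produces an almost sure infinite cluster under $\phi^0_{\bbG}$, so $p>p_c(q)$. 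In \textbf{(ContCrit)}, Corollary~\ref{cor:z} and the same monotonicity yield $\phi^0_{\bbG}[\calH_R]\ge c_\rho$ and $\phi^1_{\bbG}[\calH_R]\le 1-c_\rho$ for every rectangle of aspect ratio $\rho$; were $p\neq p_c(q)$ then $\phi^0_{\bbG}=\phi^1_{\bbG}=:\phi_{\bbG}$, but $p<p_c(q)$ forces $\phi_{\bbG}[\calH_R]\to 0$ by sharpness while $p>p_c(q)$ forces $\phi_{\bbG}[\calH_R]\to 1$ by Zhang's argument, both contradictions, hence $p=p_c(q)$. In \textbf{(DiscontCrit)}, assuming $p\neq p_c(q)$ and hence $\phi^0_{\bbG}=\phi^1_{\bbG}$, the subcritical case of uniform sharpness yields $\phi^1_{\Lambda_{2n}}[\calH_{\Lambda_n}]\le (2n+1)e^{-cn}$, contradicting the lower bound $\ge 1-e^{-cn}$; the supercritical case likewise contradicts the upper bound on $\phi^0_{\Lambda_{2n}}[\calH_{\Lambda_n}]$. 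The four alternatives being exhaustive, the three bullets of Corollary~\ref{cor:main} follow.

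Continuity of $q\mapsto p_c(q)$ then comes from the openness in $(p,q)\in[0,1]\times[1,+\infty)$ of the (SubCrit) and (SupCrit) regions. In the proof of Theorem~\ref{thm:main} each of these regimes is detected by a single finite-scale inequality on a finite-volume crossing probability, and such a condition is continuous in $(p,q)$. Openness then forces $q\mapsto p_c(q)$ to be simultaneously lower- and upper-semicontinuous, hence continuous. Openness of the set of $q$ for which (DiscontCrit) occurs at $p_c(q)$ is obtained by the same mechanism: the two defining finite-scale inequalities of (DiscontCrit) at $(p_c(q),q)$ are continuous in $(p,q)$, and combined with the continuity of $p_c$, they persist for $q$ in a neighborhood of any $q_0$ satisfying (DiscontCrit) at $p_c(q_0)$.

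The delicate step is the (DiscontCrit) case of the first stage: the naive monotonicity sandwich $\phi^0_{\Lambda_{2n}}\le \phi_{\bbG}\le \phi^1_{\Lambda_{2n}}$ only holds for fixed $n$ with $N\to\infty$, whereas the (DiscontCrit) bounds link $N$ and $n$ through $N=2n$. Bridging the two requires the \emph{uniform-in-}$N$ sharpness of the phase transition available for $q\ge 1$ below $p_c$ and its dual counterpart above $p_c$; these are precisely the strengthened inputs that force (ContCrit) and (DiscontCrit) to live at the single point $p_c(q)$.
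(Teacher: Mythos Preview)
Your overall strategy mirrors the paper's: invoke sharpness of the phase transition as an external input and exploit that each regime in the quadrichotomy is detected by a finite-scale inequality, hence is open in $(p,q)$. However, your deduction of the three bullets from the four implications is incomplete. From what you actually establish---namely \textbf{(SubCrit)} $\Rightarrow p\le p_c$, \textbf{(SupCrit)} $\Rightarrow p>p_c$, and \textbf{(ContCrit)}, \textbf{(DiscontCrit)} $\Rightarrow p=p_c$---exhaustiveness yields the first two bullets, but at $p=p_c$ you have only excluded \textbf{(SupCrit)}; nothing in your list rules out \textbf{(SubCrit)} there. Your claim \textbf{(SupCrit)} $\Rightarrow p>p_c$ (strict) already hides the same gap: from ``$\phi^0_{\bbG}$ percolates'' one obtains only $p\ge p_c$ unless one also knows $\theta^0(p_c)=0$, which you neither prove nor cite.

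The fix is precisely the openness argument you present afterwards for the continuity of $p_c$. Once \textbf{(SubCrit)} is recognised as equivalent to a single finite-volume inequality (this is Lemma~\ref{lem:a}/Corollary~\ref{cor:1}) and that inequality is continuous in $p$, \textbf{(SubCrit)} at $(p_c(q),q)$ would propagate to $(p_c(q)+\varepsilon,q)$, contradicting the second bullet. This is exactly how the paper closes the loop and concludes $\{\textbf{(SubCrit)}\}=\{p<p_c(q)\}$; you should invoke that step \emph{before} claiming the third bullet, not after. A final imprecision: your openness of \textbf{(DiscontCrit)} is said to come from ``the two defining finite-scale inequalities'', but the definition involves infinitely many $n$. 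What the paper actually uses is that, under \textbf{non(SubCrit)} and \textbf{non(SupCrit)}, \textbf{(DiscontCrit)} is equivalent to the single trigger ``$p_n<c(q)$ for some $n$'' (with $c(q)$ continuous), and it is this trigger---not the full family of exponential bounds---that is open in $(p,q)$.
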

The previous results were proved in the case of the random-cluster model on the square lattice \cite{DumSidTas14} and \cite{DumGanHar16}. Furthermore, it was shown that {\bf (ContCrit)}  occurs at $p=p_c$ when $1\le q\le 4$, while {\bf (DiscontCrit)} occurs when $q>4$. These results were extended to isoradial graphs (which include the triangular and hexagonal lattices) in \cite{DLM}.

\subsection{Applications}

The previous four properties have a number of implications for the model. 
In order to state the applications  properly, let us introduce a number of notions. From now on, $\phi$ always denote a DLR-random-cluster measure on $\mathbb G$.

We say that there is {\em uniqueness of the DLR-random-cluster measure} if $\phi^0=\phi^1$ (in this case, it is known that all DLR-random-cluster measures with the same parameters are equal to $\phi^0$).
The measure $\phi$ is said to satisfy the {\em ratio weak mixing property} with speed $f(k,n)$ if 
 for every $k\le n/2$ and every two events $\mathcal A$  depending on edges in $\Lambda_k$ and $\mathcal B$ depending on edges outside $\Lambda_n$,
 \begin{equation}\label{eq:exp mixing}
  \tag{Mix}|\phi[\mathcal A\cap\mathcal B]-\phi[\mathcal A]\phi[\mathcal B]|\le f(k,n)\phi[\mathcal A]\phi[\mathcal B].\end{equation}
The mixing is said to be {\em polynomial} if $f(k,n)\le (k/n)^c$ for some constant $c\in(0,1)$, and {\em exponential} if $f(k,n)\le \exp(-cn)$. It has been  known for a long time (see e.g.~\cite{Dum17a} and references therein) that $$|\phi[\mathcal A\cap \mathcal B]-\phi[\mathcal A]\phi[\mathcal B]|\le \phi^1_{\Lambda_n\setminus\Lambda_k}[\Lambda_k\longleftrightarrow \partial\Lambda_n]\cdot\phi[\mathcal A]\phi[\mathcal B],$$
so that \eqref{eq:exp mixing} follows from the speed of decay of connectivity probabilities.

The property {\bf (SubCrit)} corresponds to the  typical behavior of a subcritical percolation measure. In this case, there is a unique DLR-random-cluster measure. 
There is no infinite connected component almost surely. The probability that the size of the connected component of $0$ is larger than $n$ decays exponentially fast in $n$ (see Proposition~\ref{cor:2} in Section~\ref{sec:appl-sub-supcrit}).
The measure $\phi$ satisfies the exponential ratio weak mixing (in particular it is ergodic). Furthermore, one can show Ornstein-Zernike asymptotics for the probability that two points are connected (see \cite{CamIofVel08}).  
One can also deduce dynamic properties of the measure, see e.g.~\cite{BlaSin17} for the example of the mixing time of the dynamics of the random-cluster model.

The property {\bf (SupCrit)} corresponds to the typical behavior of a supercritical percolation measure.  In this case, there is a unique DLR-random-cluster measure. 
There exists an infinite connected component almost surely. The probability that  $0$ is connected to a distance $n$ but not to infinity decays exponentially in $n$. Also, the probability that the volume of the connected component of $0$ is of size exactly $n$ is of order $\exp[-O(\sqrt n)]$.
The measure satisfies the exponential ratio weak mixing (in particular it is ergodic). Furthermore, one can show Ornstein-Zernike asymptotics for the probability that two points in a finite connected component are connected (see \cite{CamIofLou10}). As in the previous paragraph, there are consequences for dynamics preserving the measure.

The property {\bf (ContCrit)} corresponds to the typical behavior of a  critical system undergoing a continuous phase transition. In this case, there is a unique DLR-random-cluster measure. 
The measure satisfies the box-crossing property: crossing estimates remain bounded away from 0 and 1 uniformly in the boundary conditions (see Proposition~\ref{cor:2} in the next section).
One deduces that there exists no infinite connected component almost surely. Also, the probability that the size of the connected component of $0$ is larger than $n$ decays faster than $n^{-\alpha}$ and slower than $n^{-\beta}$ for two constants $0<\alpha<\beta<\infty$.
Finally, the measure satisfies the polynomial ratio weak mixing (in particular it is ergodic). 

The property {\bf (DiscontCrit)} corresponds to the typical behavior of a critical system undergoing a discontinuous phase transition. In particular, uniqueness of the DLR-random-cluster measure fails since $\phi^1\ne\phi^0$. Then, there
 exists (resp.~does not exist) an infinite connected component $\phi^1$-almost surely (resp.~$\phi^0$-almost surely). The $\phi^0$-probability that the connected component of $0$ but has radius larger than $n$ decays exponentially in $n$. Also, one can construct non-ergodic DLR-random-cluster measures by taking non-trivial averages of $\phi^0$ and $\phi^1$. There are also consequences for the dynamic aspects (some interesting questions are still open there), see e.g.~\cite{GheLub17}.

The table below summarizes the discussion above. Stars refer to  statements that are not proved in this paper, but should follow from the corresponding results for the random-cluster model. Question marks correspond to open questions.
\begin{center}
\begin{tabular}{|c|c|c|c|c|}
\hline
{\bf Property} & {\bf (SubCrit)} &{\bf (SupCrit)} & {\bf (ContCrit)} & {\bf (DiscontCrit)} \\  \hline
 Regime & $p<p_c$ & $p>p_c$ & $p=p_c$ and $q\le 4$ & $p=p_c$ and $q>4$ \\ \hline
 Existence infinite c.c. & no & yes & no & depends*  \\ \hline
 Ergodicity & yes & yes & yes & no \\ \hline
 uniqueness of Gibbs state & yes & yes & yes & no \\ \hline
 Decay volume of finite c.c. & exponential & stretched-exp & polynomial & exponential \\  \hline
 mixing & exponential & exponential & polynomial & none \\ \hline
 Ornstein-Zernike & yes* & yes* & no* & ?\\ \hline
 \end{tabular}
\end{center}

\subsection{Potential generalizations to other models}\label{sec:other models}

The argument presented in this paper is more general than the one in \cite{DumSidTas14}. Not only does it apply to the random-cluster model on graphs which are not self-dual, but it also finds applications in other models. For these more general applications, it is crucial to get rid of the self-duality argument used in \cite{BefDum12,DumSidTas14}. We therefore want to advertise that the argument presented here is substantially better and more robust.

The first model to come to mind is the random-cluster representation of the dilute Potts model studied in \cite{DumGlaPelSpi17}, which can be thought of as a site percolation version of the random-cluster model. This representation includes an important example, which is the $+/-$ spin representation of the Ising model.

 Even though formally the proof requires strict spatial Markov property (see Section~\ref{sec:0} for a definition), simple modifications can help treat other models having weaker forms of spatial Markov property. In recent years, the super-level lines of random functions have been the object of an intense interest. For instance, logarithmic delocalization of uniformly chosen Lipschitz functions on the hexagonal lattice \cite{GlaMan18} and uniformly chosen homomorphisms on the square lattice \cite{Hom} have been obtained. At the heart of the proof lies a dichotomy theorem that uses ideas developed in this paper. Let us mention that major additional difficulties arise in these models.

\paragraph{Organization} The paper is organized as follows. In Section~\ref{sec:0}, we present some classical facts on the random-cluster model. In Section~\ref{sec:2}, we prove Theorem~\ref{thm:RSW}. In Section~\ref{sec:4}, we discuss crossing probabilities with favorable boundary conditions. In Section~\ref{sec:3}, we show Theorem~\ref{thm:main} and Corollary~\ref{cor:z}. 
In the last section, we prove a few properties following from Theorem~\ref{thm:main} and we prove Corollary~\ref{cor:main}.  

\section{Preliminaries}\label{sec:0}

\subsection{Classical properties of the random-cluster model}

In this section, we list the important properties of the random-cluster model that we will use (they can be found in \cite{Gri06,Dum17a}):
\begin{itemize}
\item (invariance) Let  $\tau$ be an automorphism of $\bbG$. Then for every event $\mathcal A$ depending on the edges in $G$ and every boundary conditions $\xi$, we have
    \begin{equation}
      \label{eq:1}
      \phi_{\tau\cdot G}^{\tau\cdot \xi}[\tau\cdot\mathcal A]= \phi_{G}^{\xi}[\mathcal A],
    \end{equation}
    where $\tau\cdot X$ denotes the image of $X$ under the  action of $\tau$. In particular, $\phi^1_{\mathbb G}$ and $\phi^0_{\mathbb G}$ are invariant under translations.

\item (spatial-Markov property) for every subgragh $H$ (with edge-set $F$) of $G$, every boundary conditions $\xi$ on $G$, and every configuration $\xi'\in\{0,1\}^{E\setminus F}$,
\begin{equation}\label{eq:DMP}\tag{SMP}\phi_G^\xi[\cdot_{|F}|\omega_e=\xi_e \forall e\in E\setminus F]=\phi_H^{\xi\cup\xi'},\end{equation}
where $\xi\cup\xi'$ is the boundary conditions on $H$ given by $x$ and $y$ are in the same element of the partition of $\xi\cup\xi'$ if they are connected in the graph made of the vertices of $G$, where vertices in the same element of the partition $\xi$ are identified, and edge-set given by the open edges of $\xi$. 
\item (FKG inequality) for every increasing events $\mathcal A$ and $\mathcal B$ (an event is increasing if $\omega\le\omega'$ and $\omega$ belonging to this event implies that $\omega'$ also does),
\begin{equation}\label{eq:FKG}\tag{FKG}\phi_G^\xi[\mathcal A\cap \mathcal B]\ge\phi_G^\xi[\mathcal A]\phi_G^\xi[\mathcal B].\end{equation}
\item (comparison between boundary conditions) For every $\zeta$ dominating $\xi$ ($\zeta$ {\em dominates} $\xi$ if every two vertices that are wired in $\xi$ are wired in $\zeta$), and every increasing event $\mathcal A$,
\begin{equation}\label{eq:CBC}\tag{CBC}\phi_G^\xi[\mathcal A]\le \phi_G^\zeta[\mathcal A].\end{equation}
\end{itemize}
We will need a last argument to compare boundary conditions, which is almost tautological: for every two boundary conditions $\xi$ and $\zeta$,
\begin{equation}\label{eq:FI}
\phi^\xi_G[\mathcal A]\le q^{\max\{k_\xi(\omega)-k_{\zeta}(\omega)\,:\,\omega\}-\min\{k_\xi(\omega)-k_{\zeta}(\omega)\,:\,\omega\}}\phi^\zeta_G[\mathcal A].
\end{equation}
We will apply this to bound the ratio of the two probabilities by $q^{k}$, where we change the boundary conditions on $k$ vertices on the boundary. We will also use it in the following special case. Let mix boundary conditions corresponding to two non-trivial partition elements $A$ and $B$, and all the other elements of the partition are singletons. The sets $A$ and $B$ will often be two arcs on the boundary of the graph. We will want to compare these mix boundary conditions to very close ones, called $*$-mix boundary conditions, where the partition is given by $A\cup B$ and singletons. In this case, one obtains
$\phi^{*-\rm mix}_G[\mathcal A]\le \,q\phi^{\rm mix}_G[\mathcal A]$ since there can be only a difference of one between the counts of clusters in both cases. To draw the attention of the reader on the difference between the two boundary conditions, we will try to use the mix boundary conditions consistently  for the case where the two wired arcs are not wired together, and $*$-mix for the one where they are.

  \subsection{Monotonicity in the domain}
The properties \eqref{eq:DMP}, \eqref{eq:FKG} and \eqref{eq:CBC} allow us to compare measures in different domains with suitable boundary conditions. In this section, we describe this monotonicity in the domain, which will be used in many places in the rest of the  paper. Let us begin with a simple and well-known instance of this monotonicity property with wired boundary conditions.  If $G'\subset G$ are two finite subgraphs of $\mathbb G$, then the measure $\phi_G^1$ restricted to $G'$ is stochastically dominated by the measure  $\phi_{G'}^1$, meaning that for every increasing event $\mathcal A$  depending on the edges of $G'$, 
\begin{equation}\label{eq:2}
  \phi_{G}^1[\mathcal A]\le  \phi_{G'}^1[\mathcal A].
\end{equation}
The equation above  is a direct consequence of  \eqref{eq:FKG} and \eqref{eq:DMP}. Indeed, writing $E$ and $E'$ for the edge-sets of $G$ and $G'$ respectively, we have
\begin{equation}\label{eq:3}
  \phi_G^1[\mathcal A] \overset{\eqref{eq:FKG}}{\le}  \phi_G^1[\mathcal A\, | \forall e \in E\setminus E'\ \omega(e)=1] \overset{\eqref{eq:DMP}}=  \phi_{G'}^1[\mathcal A].
\end{equation} 
Using the same idea, we can also compare measures with more general boundary conditions. We formalize this by introducing  a  partial ordering on the set of pairs $(G,\xi)$ where $G$ denotes a finite subgraph of $\mathbb G$ and $\xi$ are boundary conditions on $G$. Write $(G,\xi)\preceq_1(G',\xi')$ if
\begin{equation*}
G'\subset G \quad \text{and}\quad \text{$\xi'$ dominates $\xi\cup 1$}.
\end{equation*}
Intuitively, we have $(G,\xi)\preceq_1 (G',\xi')$ when $(G,\xi)$ is obtained from $(G',\xi')$ by ``pushing'' the wired boundary conditions away. Let us give a first example.

\medskip 
\noindent \textbf{Example 1:} For every boundary conditions $\xi$ on $G$ and $G'\subset G$, we have $(G,\xi)\preceq_1(G',1)$. See Fig.~\ref{fig:5} for an illustration.
\begin{figure}[ht]
  \centering
  \includegraphics[width=5cm]{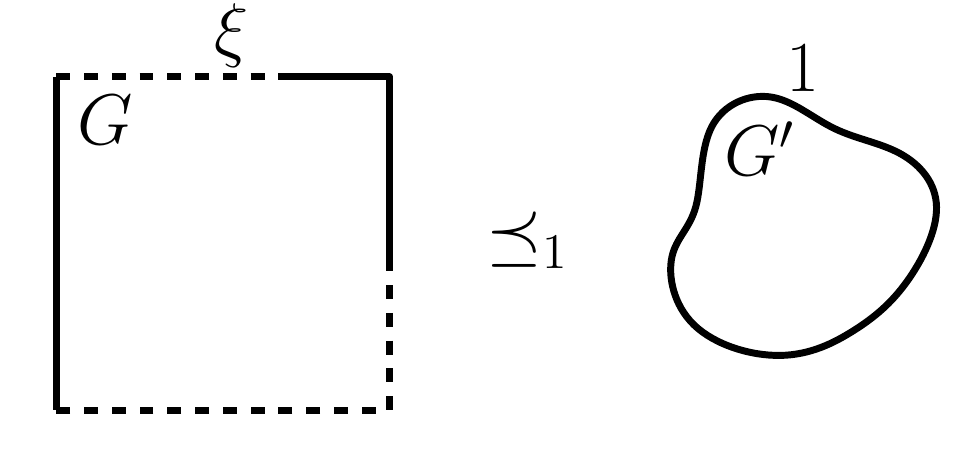}
  \caption{Illustration of Example 1. The solid lines represent wired boundary conditions and the dashed lines represent free boundary conditions. We will often compare domains resulting from explorations of random paths/circuits to a deterministic domain (on the right, a typical domain resulting from the exploration of the outermost circuit and on the left, a deterministic square).}\label{fig:5}
\end{figure}
\medskip

\noindent Of course, a similar statement holds for free boundary conditions. In particular, we can introduce a partial ordering corresponding to ``pushing'' the free boundary conditions away.   Write $(G,\xi)\preceq_0(G',\xi')$ if
\begin{equation*}
G\subset G' \quad \text{and}\quad \text{$\xi$ is dominated by $\xi'\cup 0$}.
\end{equation*} 
Let us give a second example illustrating the two orderings $\preceq_0$ and $\preceq_1$ defined above.

\medskip

\noindent \textbf{Example 2:}
Let $R=[-n,3n]\times[-n,n]$ with boundary conditions $\xi$ defined to be free on $[-n,n]\times\{-n\}$ and $[-n,n]\times\{n\}$ and wired everywhere else. Let $S=[-n,n]^2$ with boundary conditions $\xi'$ defined to be free on top and bottom  and wired everywhere else.  Let  $T=[-n,n]\times[-n,3n]$ with boundary conditions $\xi''$ defined to be wired on $ \{-n\}\times [-n,n]$ and $\{n\} \times [-n,n]$ (the two arcs are also wired together) and free everywhere else. Then, as illustrated on Fig.~\ref{fig:6}, we have
\begin{equation*}
(R,\xi)\preceq_1(S,\xi')\quad\text{ and }\quad (S,\xi')\preceq_0 (T,\xi'').\end{equation*}
\begin{figure}[h]
  \centering
  \begin{minipage}[c]{.7\linewidth} \includegraphics[width=4cm]{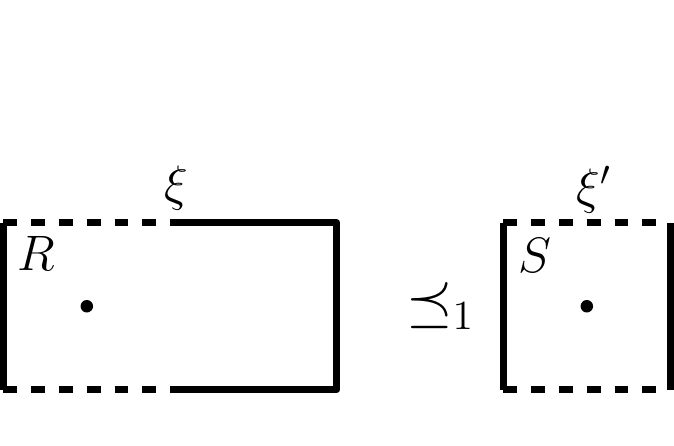}\hfill\includegraphics[width=4cm]{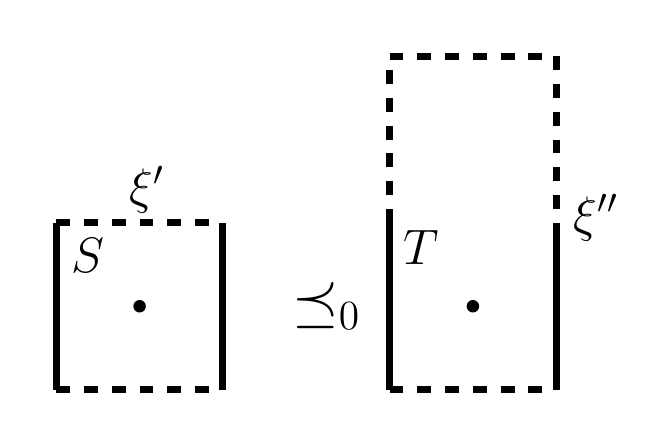}  \end{minipage}
  \caption{Illustration of Example 2. On the left (resp.\@ right) the ordering corresponds to pushing the wired (resp.\@ free)  boundary conditions.}\label{fig:6}
\end{figure}

Using the same reasoning as \eqref{eq:3} and \eqref{eq:CBC}, one can check that the following proposition holds.
\begin{proposition}
  Let $(G,\xi)\preceq_1(G',\xi')$ or $(G,\xi)\preceq_0(G',\xi')$. Then for every increasing event $\mathcal A$ depending on the edges in $G\cap G'$, we have
  \begin{equation}
    \label{eq:4}
    \phi_G^\xi[\mathcal A] \le  \phi_{G'}^{\xi'}[\mathcal A].\tag{MON}
  \end{equation}
\end{proposition}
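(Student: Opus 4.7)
The plan is to follow the strategy already illustrated by equation \eqref{eq:3}, treating the two orderings in turn via a short chain: \eqref{eq:FKG} to condition on a favorable state of the edges lying in the symmetric difference, then \eqref{eq:DMP} to identify the conditioned measure with a random-cluster measure on the smaller domain, then \eqref{eq:CBC} to absorb the change of boundary condition. Throughout, $E$ and $E'$ denote the edge-sets of $G$ and $G'$, and $\mathcal A$ is increasing and depends only on edges in $G\cap G'$.

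Consider first the case $(G,\xi)\preceq_1(G',\xi')$, so that $E'\subset E$ and $\xi'$ dominates $\xi\cup 1$. The event $\{\omega_e=1\ \forall e\in E\setminus E'\}$ is increasing, so \eqref{eq:FKG} gives
\begin{equation*}
\phi_G^\xi[\mathcal A]\;\le\; \phi_G^\xi\bigl[\mathcal A\,\big|\,\omega_e=1\ \forall e\in E\setminus E'\bigr].
\end{equation*}
By \eqref{eq:DMP}, the right-hand side equals $\phi_{G'}^{\xi\cup 1}[\mathcal A]$, where $\xi\cup 1$ is the boundary condition on $G'$ induced by $\xi$ together with all edges of $E\setminus E'$ being open. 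Since $\xi'$ dominates $\xi\cup 1$ by the definition of $\preceq_1$, \eqref{eq:CBC} yields $\phi_{G'}^{\xi\cup 1}[\mathcal A]\le \phi_{G'}^{\xi'}[\mathcal A]$, which finishes the first case.

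The second case $(G,\xi)\preceq_0(G',\xi')$ (so $E\subset E'$ and $\xi$ is dominated by $\xi'\cup 0$) is dual: we now ``push the free boundary in'' starting from the larger domain. The event $\{\omega_e=0\ \forall e\in E'\setminus E\}$ is decreasing, and applying \eqref{eq:FKG} to the complementary increasing event shows that conditioning on it can only decrease the probability of an increasing event, so
\begin{equation*}
\phi_{G'}^{\xi'}[\mathcal A]\;\ge\; \phi_{G'}^{\xi'}\bigl[\mathcal A\,\big|\,\omega_e=0\ \forall e\in E'\setminus E\bigr].
\end{equation*}
By \eqref{eq:DMP}, this conditional probability equals $\phi_G^{\xi'\cup 0}[\mathcal A]$, where $\xi'\cup 0$ is the boundary condition on $G$ read off from $\xi'$ once the edges of $E'\setminus E$ are declared closed. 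Because $\xi$ is dominated by $\xi'\cup 0$, a last application of \eqref{eq:CBC} gives $\phi_G^\xi[\mathcal A]\le \phi_G^{\xi'\cup 0}[\mathcal A]$, and chaining the two inequalities concludes the proof.

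The only step that requires any care is verifying that the induced boundary condition produced by the spatial Markov step is exactly the one with which the hypotheses of $\preceq_1$ (resp.\@ $\preceq_0$) are formulated, i.e.\@ that freezing the external edges to $1$ (resp.\@ $0$) genuinely produces the $\xi\cup 1$ (resp.\@ $\xi'\cup 0$) appearing in the definition of the partial orderings. Once this bookkeeping is unwound, the proposition is a formal three-line chain, which is why it is collected here for later reuse rather than proved in the body of the paper.
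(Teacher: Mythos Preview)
Your proof is correct and follows exactly the route the paper indicates: the paper does not spell out a proof but simply remarks that the proposition follows ``using the same reasoning as \eqref{eq:3} and \eqref{eq:CBC}'', i.e.\ condition via \eqref{eq:FKG} on the edges in the symmetric difference being all open (resp.\ all closed), apply \eqref{eq:DMP}, then finish with \eqref{eq:CBC}. Your two cases implement precisely this chain, so nothing further is needed.
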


\begin{remark}
  In applications, we often apply the proposition above twice: first we  push the wired  boundary conditions closer, and then the free boundary away. For instance, in Example 2 above, we have
  \begin{equation*}
    \phi_R^\xi[\mathcal A]\le \phi_T^{\xi''}[\mathcal A]
  \end{equation*}
  for every increasing event $\mathcal A$ depending on the edges in the square $S$.
\end{remark}

\subsection{Strip measures}\label{sec:strip}

In this section, we define random-cluster measures on the infinite strip $S_n=\bbZ\times[-n,2n]$. 

\begin{proposition}\label{prop:1}
  Let $\xi$ be some boundary conditions on $\partial S_n$. For every $m\ge 1$, let $\xi_m$ be boundary conditions on the boundary of $R_{m,n}=[-m,m]\times[-n, 2n]$ inducing the same partition as $\xi$ at the top and bottom sides of $R_{m,n}$ respectively. There exists a  measure $\phi_{S_n}^\xi$ in the strip~$S_n$  characterized by 
  \begin{equation}
    \label{eq:5}
    \phi_{S_n}^{\xi}[\mathcal A]=\lim_{m\to\infty}\phi_{R_{m,n}}^{\xi_m}[\mathcal A]
  \end{equation}
   for every event $\mathcal A$ depending on finitely many edges of $S_n$.
Furthermore, the limit above is independent of the choice of the sequence ($\xi_m$) as long as it induces the same partition as $\xi$ at the top and bottom sides of $R_{m,n}$.
\end{proposition}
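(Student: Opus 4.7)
The plan is to construct $\phi_{S_n}^\xi$ as a monotone limit using extremal boundary conditions on the two vertical sides of $R_{m,n}$, and then to sandwich any admissible sequence between the two extremes via \eqref{eq:CBC}. Concretely, I define $\xi_m^1$ (respectively $\xi_m^0$) to be the boundary conditions on $\partial R_{m,n}$ that coincide with $\xi$ on the top and bottom sides and are fully wired (respectively free) on the two vertical sides $\{\pm m\}\times[-n,2n]$. For any increasing event $\mathcal A$ depending on finitely many edges and any $m'\ge m$ large enough that $\mathcal A$ is supported in $R_{m,n}$, the orderings $(R_{m',n},\xi_{m'}^1)\preceq_1(R_{m,n},\xi_m^1)$ and $(R_{m',n},\xi_{m'}^0)\preceq_0(R_{m,n},\xi_m^0)$ combined with \eqref{eq:4} give
\begin{equation*}
\phi_{R_{m',n}}^{\xi_{m'}^1}[\mathcal A]\le\phi_{R_{m,n}}^{\xi_m^1}[\mathcal A]
\quad\text{and}\quad
\phi_{R_{m',n}}^{\xi_{m'}^0}[\mathcal A]\ge\phi_{R_{m,n}}^{\xi_m^0}[\mathcal A].
\end{equation*}
The key verification is that, with all edges of $R_{m',n}\setminus R_{m,n}$ open, the wiring induced on $\partial R_{m,n}$ from $\xi_{m'}^1$ merges every side vertex of $R_{m,n}$ into a single class, which matches $\xi_m^1$; the free case is symmetric. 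Monotone convergence then produces well-defined limits $\mu^1[\mathcal A]$ and $\mu^0[\mathcal A]$, extended to measures on $S_n$ by the monotone class theorem.

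For an arbitrary admissible sequence $(\xi_m)$, \eqref{eq:CBC} gives
\begin{equation*}
\phi_{R_{m,n}}^{\xi_m^0}[\mathcal A]\le\phi_{R_{m,n}}^{\xi_m}[\mathcal A]\le\phi_{R_{m,n}}^{\xi_m^1}[\mathcal A]
\end{equation*}
for every increasing local event $\mathcal A$, so both the existence of the limit and its independence of the choice of $(\xi_m)$ reduce to proving $\mu^0=\mu^1$.

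The hard part is this final equality. I would exploit the quasi-one-dimensional structure of $S_n$: using \eqref{eq:DMP}, the strip measure can be encoded as a Markov chain indexed by $k\in\bbZ$ whose state $C_k$ records the configuration of edges meeting the column $\{k\}\times[-n,2n]$. This chain lives on a finite state space with strictly positive transition probabilities, so a transfer-matrix / Perron--Frobenius argument produces a unique translation-invariant measure on $S_n$ compatible with the prescribed top/bottom partition. Combining this with \eqref{eq:FI}, which ensures that the ratio $\mu^1[\mathcal A]/\mu^0[\mathcal A]$ is bounded by a constant $q^{K_n}$ depending only on $n$, then forces $\mu^0=\mu^1$, after which the second step delivers both assertions of the proposition. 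The principal obstacle is that the $O(n)$ boundary vertices on the two vertical sides of $R_{m,n}$ exert a non-negligible direct influence on cluster counts which monotonicity alone cannot eliminate, so genuine one-dimensional mixing is required.
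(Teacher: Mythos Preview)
Your setup --- extremal side boundary conditions $\xi_m^0,\xi_m^1$, monotone limits $\mu^0,\mu^1$ via \eqref{eq:4}, and sandwiching any admissible $(\xi_m)$ via \eqref{eq:CBC} --- is exactly the paper's. The divergence is only in how you establish $\mu^0=\mu^1$.

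The paper does this far more cheaply than your Perron--Frobenius proposal. By finite energy, each unit-width vertical slab of the strip has, conditionally on everything outside it, probability at least some $\delta=\delta(n)>0$ of being entirely closed; since there are order $m$ disjoint slabs separating the window $\Lambda=[-k,k]\times[-n,2n]$ from the vertical sides of $R_{m,n}$, the $\phi_{R_{m,n}}^{\xi^{(1)}}$-probability that $\Lambda$ is connected to either side is at most $2(1-\delta)^{m-k}\to0$. On the complementary event the wiring on the sides is invisible from $\Lambda$, so $\phi_{R_{m,n}}^{\xi^{(1)}}[\mathcal A]-\phi_{R_{m,n}}^{\xi^{(0)}}[\mathcal A]$ vanishes in the limit. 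This works for \emph{arbitrary} $\xi$ on top and bottom, with no periodicity hypothesis.

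Your transfer-matrix sketch has two genuine gaps. First, the state ``configuration of edges meeting the column'' does not make the process Markov: because of the non-local factor $q^{k_\xi(\omega)}$, the conditional law to the right of column $k$ depends on the \emph{partition} of the column-$k$ vertices induced by connectivity through the left half-strip, not merely on which edges are incident to that column. The correct state space must carry this induced partition. Second, even with the right state space, Perron--Frobenius delivers a unique stationary measure only for a homogeneous chain; the proposition allows an arbitrary (non-periodic) $\xi$, for which the chain is inhomogeneous and $\mu^0,\mu^1$ need not be translation-invariant, so ``unique translation-invariant measure'' does not pin them down. A uniform Doeblin/minorization bound (again coming from finite energy) would repair this, but at that point you have essentially reproduced the paper's argument in heavier notation. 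Finally, the appeal to \eqref{eq:FI} is not doing real work: a bounded ratio $\mu^1[\mathcal A]/\mu^0[\mathcal A]\le q^{K_n}$ does not by itself force equality, and it does not combine with Perron--Frobenius in the way you suggest.
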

The strip measures inherit the properties of $\phi$. Namely, they satisfy \eqref{eq:DMP}, \eqref{eq:CBC}, \eqref{eq:FKG}.
\begin{proof}
Let $\mathcal A$ be an increasing event depending of the edges in $\Lambda:=[-k,k]\times [-n,2n]$. Then, by monotonicity, we can define the increasing limit
 \begin{equation}
   \label{eq:6}
    \phi_{S_n}^\xi[\mathcal A]:=\lim_{\substack{m\to\infty\\ m\ge k}}\phi_{[-m,m]\times[-n,2n]}^{\xi^{(1)}}[\mathcal A],
 \end{equation}
 where $\xi^{(1)}$ are the boundary conditions where all the vertices on the left and right sides of $[-m,m]\times[-n,2n]$ are wired together. 
 By inclusion-exclusion, we extend the measure $\phi_{S_n}^\xi$ to all the events depending on finitely many edges in $S_n$, and by Kolmogorov theorem, to all events.
 
 Now, on the strip, one can easily check using finite-energy that $\Lambda$ is connected to the left or right side of $[-m,m]\times[-n,2n]$ with probability tending to 0 as $m$ tends to infinity. Thus, 
 $$\lim_{\substack{m\to\infty\\ m\ge k}}\phi_{[-m,m]\times[-n,2n]}^{\xi^{(1)}}[\mathcal A]=\lim_{\substack{m\to\infty\\ m\ge k}}\phi_{[-m,m]\times[-n,2n]}^{\xi^{(0)}}[\mathcal A],$$
 where $\xi^{(0)}$ is the boundary conditions on $[-m,m]\times[-n,2n]$ corresponding to $\xi$ except that no two vertices on the left or right sides are wired together.
  The convergence \eqref{eq:5} follows readily by \eqref{eq:CBC}. 
\end{proof}

From now on, write $\phi_{S_n}^0$, $\phi_{S_n}^1$ and $\phi_{S_n}^{0/1}$ for the measures corresponding to the boundary conditions $\xi$ defined to be respectively always equal to 0, always equal to 1, and equal to 1 if and only if $e\subset \bbR\times(-\infty,0]$. It follows from the previous proposition and the hypotheses on $\phi$ that these three measures are invariant w.r.t.~to horizontal translations and vertical reflections. 

\subsection{Duality}
\label{sec:duality}
Define $\bbG^*$  to be the dual  graph of $\bbG$, obtained by putting a vertex in every face of $\bbG$, and a dual edge $e^*$ between vertices corresponding to faces bordered by the same edge $e$ of $\bbG$. When $G$ is a finite subgraph of $\bbG^*$, let $G^*$ be the subgraph of $\bbG^*$ with vertex set given by the edges $e^*$ with $e\in E$, and vertices being the endpoints of these vertices. 
Then, for a measure $\phi$ on $G$, define a dual measure $\phi^*$ on $G^*$ as follows: $e^*$ is open in $\phi^*$ if $e$ is closed in $\phi$, and vice versa. 

The only information that we will need is that the dual of $\phi^1_{\Lambda_n}$ (resp.~$\phi^0_{\Lambda_n}$) is a random-cluster measure on the dual graph with free boundary conditions (resp.~wired). In fact, one can check that the parameters $(p^*,q^*)$ of the dual measure are given by the equations
$$\frac{pp^*}{(1-p)(1-p^*)}=q\qquad\text{and}\qquad q^*=q.$$
Furthermore, the relation extends to infinite volume.  The dual of $\phi^1$ and $\phi^0$ are DLR-random-cluster measures on $\bbG^*$ with free and wired boundary conditions respectively. Equivalently, dual measures can be defined in the strip in a natural way. One can define the dual strip $S_n^*$ as subgraphs of $\mathbb G^*$ and the dual of $\phi_{S_n}^0,\phi_{S_n}^1$ and $\phi_{S_n}^{0/1}$ are $\phi_{S_n^*}^0,\phi_{S_n}^1$ and $\phi_{S_n}^{1/0}$.

Note that the property {\bf (SupCrit)} is dual to {\bf (SubCrit)} in the sense that the model satisfies one if and only if its dual satisfies the other one. On the contrary {\bf (ContCrit)} and {\bf (DiscontCrit)} are self-dual: the model satisfies one if and only if its dual does. 

\section{Russo-Seymour-Welsh theory}\label{sec:2}

 In this whole section, $\phi$ can be either $\phi^\xi_{\bbG}$ with $\xi\in\{0,1\}$ or $\phi_{S_m}^\xi$ for some $m\ge n$ and $\xi\in\{0,1,0/1\}$. The goal in this section is to establish Theorem~\ref{thm:RSW} and the key step in the proof will be the following proposition.

\begin{proposition}\label{prop:q} For any $\rho>0$, there exists $c_0=c_0(\rho)>0$ such that for every $n\ge 1/\rho$, 
\begin{equation}\label{eq:prop:q}\phi[\calH_R]\ge c_0\phi[\calV_R]^{1/c_0},\end{equation}
where $R:=[0,\rho n]\times[0,n]$. \end{proposition}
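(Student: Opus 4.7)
\emph{Plan.} I would prove Proposition~\ref{prop:q} by a Russo--Seymour--Welsh-type argument, using only the ingredients available uniformly for every admissible $\phi$: \eqref{eq:FKG}, \eqref{eq:DMP}, \eqref{eq:CBC}, the domain monotonicity \eqref{eq:4}, and the invariance of $\phi$ under horizontal translations and reflections across vertical lines. Self-duality is deliberately avoided, which is essential since the Dobrushin strip measure $\phi_{S_m}^{0/1}$ has no such structure.

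\emph{Reduction to a key estimate.} The argument reduces to establishing a ``doubling'' estimate of the form
\begin{equation*}
\phi[\calH_{[0,2n]\times[0,n]}] \ \ge\ c\,\phi[\calV_{[0,n]^2}]^{C}
\end{equation*}
for some $c,C>0$. From this, the general statement follows by standard FKG-based constructions. For $\rho>2$, one uses the overlap construction: writing $R=[0,\rho n]\times[0,n]$ as the union of two shorter overlapping rectangles $R_1,R_2$ whose common overlap is a square of side $n$, the inclusion $\calH_{R_1}\cap\calH_{R_2}\cap\calV_{\mathrm{overlap}}\subseteq\calH_R$ combined with \eqref{eq:FKG} yields
\begin{equation*}
\phi[\calH_R]\ \ge\ \phi[\calH_{R_1}]\,\phi[\calH_{R_2}]\,\phi[\calV_{\mathrm{overlap}}],
\end{equation*}
and iteration reduces the aspect ratio below $2$ in finitely many steps. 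The case $1\le\rho<2$ follows from $[0,2n]\times[0,n]\supset R$ and the elementary inclusion $\calH_{[0,2n]\times[0,n]}\subseteq\calH_R$ (the initial segment of a horizontal crossing of the wider rectangle up to its first hit of $\{\rho n\}\times[0,n]$ provides one of $R$). The case $\rho<1$, the ``easy'' direction, is handled by stacking $O(1/\rho)$ horizontal sub-rectangles of aspect ratio $\approx 1$ and applying \eqref{eq:FKG} inside each.

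\emph{The doubling step and its obstacles.} For the key estimate, the plan is to reveal an extremal vertical crossing $\Gamma$ of $[0,n]^2$ (e.g.~the leftmost one) through an exploration; by \eqref{eq:DMP}, the configuration to the right of $\Gamma$ inherits favorable wired boundary conditions along $\Gamma$. A horizontal translation by $n$ (invariance of $\phi$) together with \eqref{eq:4} provides a companion crossing inside $[n,2n]\times[0,n]$, and the two are combined with a short horizontal connection in the overlap via another application of \eqref{eq:FKG}, producing a horizontal crossing of $[0,2n]\times[0,n]$. Two obstacles dominate the analysis. First, the gluing: a horizontal crossing assembled by concatenating paths at a single point is not an increasing event, so the desired H-shape must be decomposed into genuinely increasing subevents and explored interface by interface through \eqref{eq:DMP}, with \eqref{eq:CBC} and \eqref{eq:4} controlling the conditional boundary conditions and comparing them to unconditional ones. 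Second, the Dobrushin strip measure $\phi_{S_m}^{0/1}$ lacks reflection symmetry across horizontal lines, so every geometric construction must respect only the reduced symmetry group (horizontal translations and vertical-line reflections), and \eqref{eq:4} must be invoked each time a boundary condition would otherwise break the setup. Managing both points uniformly over all admissible $\phi$, without any recourse to self-duality, is the core technical challenge this section is designed to overcome.
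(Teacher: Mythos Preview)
Your reduction via overlapping rectangles and \eqref{eq:FKG} is fine, but the ``doubling step'' is where the entire difficulty lies, and your proposal does not actually solve it. Revealing the leftmost crossing $\Gamma$ of $[0,n]^2$ and invoking translation invariance to produce a second vertical crossing in $[n,2n]\times[0,n]$ gives you two vertical paths, but nothing forces them to connect; the phrase ``combined with a short horizontal connection in the overlap via another application of \eqref{eq:FKG}'' is the gap. There is no overlap (the squares share only a segment), and even with overlapping squares a horizontal crossing of the overlap need not meet either vertical crossing. What you would need is a lower bound on $\phi[\Gamma\longleftrightarrow\{2n\}\times[0,n]\mid\Gamma]$ in terms of $\phi[\calV_{[0,n]^2}]$, and none of \eqref{eq:FKG}, \eqref{eq:DMP}, \eqref{eq:CBC}, \eqref{eq:4} alone provide this: the wired arc along $\Gamma$ is favorable, but you have no a priori control on connection probabilities in the irregular domain to its right. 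This is precisely the step where the classical Bernoulli argument uses independence and the square-root trick, and where earlier FK arguments used self-duality---both unavailable here.

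The paper's proof supplies the missing idea. One conditions on \emph{two} extremal crossings $\Gamma_1,\Gamma_2$ emanating from well-separated short segments $S_0,S_4$ on the bottom, builds a symmetric domain ${\rm Sym}$ between them, and compares the conditional probability that $\Gamma_1\longleftrightarrow\Gamma_2$ in ${\rm Sym}$ (which implies the bridge event $\calA_0$) with the conditional probability that a \emph{third} crossing from the intermediate segment $S_2$ traverses ${\rm Sym}$ without touching $\Gamma_1$ or $\Gamma_2$. By \eqref{eq:4} and the $\pi/2$-symmetry of ${\rm Sym}$, these two probabilities are the ``left--right'' and ``top--bottom'' crossing probabilities of ${\rm Sym}$ under boundary conditions that differ only by whether the two wired arcs are wired together; \eqref{eq:FI} then bounds their ratio by $q$. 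This yields $\phi[\calA_0\mid\Gamma_1,\Gamma_2]\ge\tfrac1q\,\phi[\calC_2\setminus(\calA_0\cup\calA_2)\mid\Gamma_1,\Gamma_2]$, and hence $\phi[\calA_0]\ge c\,\phi[\calV_R]^3$. The third-crossing comparison is the substitute for self-duality; your outline contains nothing playing this role.
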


The proof is presented next section but before that, we show how the proposition implies the theorem. This part of the proof is classical.
\begin{proof}[Proof of Theorem~\ref{thm:RSW} (using Proposition~\ref{prop:q})]
The inequality \eqref{eq:prop:q} provided by Proposition~\ref{prop:q} is useful for values of $\phi[\calV_R]$ which are close to $0$, but proving the existence of the homeomorphism $f$ for every $x\in[0,1]$ requires to check that if $\phi[\calV_R]$ is close to 1, then so is $\phi[\calH_R]$. 

In order to do that, we apply the proposition to the dual measure $\phi^*$ of $\phi$ (see the definition in  Section~\ref{sec:duality}). If $\phi=\phi_{S_n}^\xi$ is a strip measure, one can check that its dual version $\phi^*$ corresponds to a strip measure for the dual model, hence Proposition~\ref{prop:q}  applies and
$$\phi^*[\calH_R]\ge c_0\phi^*[\calV_R]^{1/c_0}.$$
Yet, $\phi^*[\calH_R]=1-\phi[\calV_R]$ and $\phi^*[\calV_R]=1-\phi[\calH_R]$ give that 
$$1-\phi[\calV_R]\ge c_0(1-\phi[\calH_R])^{1/c_0},$$
which in turns implies that 
\begin{equation}\label{eq:prop2}\phi[\calH_R]\ge 1-c_0^{-c_0}\times(1-\phi[\calV_R])^{c_0}.\end{equation}
The existence of $f$ follows readily from \eqref{eq:prop:q} and \eqref{eq:prop2}.
\end{proof}

To conclude the proof of Theorem~\ref{thm:RSW}, we therefore need to show Proposition~\ref{prop:q}. 
Set $k=\lceil n/50\rceil$ and introduce the rectangle $R_0:=[-17k,18k]\times[0,n]$ and the horizontal segment $S_0:=[0,k]\times\{0\}$ centered on its bottom (the constants 17, 18 and 50 are there for convenience but any constants $a,b,c$ larger than those and satisfying $b=a+1$ and $c\ge 2a+5$ would do). We also introduce the translates $R_j$ and $S_j$ of these sets by the vector $(jk,0)$.

In the RSW theory, the difficulty comes from the fact that vertical crossings of wide rectangles are not much constrained, so that it is difficult to combine them into crossings staying in some chosen area (for instance to create horizontal crossings of very long rectangles). It will therefore not come as a surprise that the heart of the proof is encapsulated in the following lemma, which shows that the probability of having a ``bridge'' between segments at the bottom of the rectangle, of size $k$ and separated by a segment of size $k$ can be bounded from below in terms of the probability of crossing the rectangle vertically. Different crossings bridging between different segments can then easily be combined to create long horizontal segments. Let us formalize this. Consider the events (see Fig.~\ref{fig:2})
\begin{equation}
\calA_j:=\big\{S_j\lr[R_j\cup R_{j+4}] (S_{j+2}\cup S_{j+4})\big\}.\label{eq:7}
\end{equation}

\begin{figure}\begin{center}
\includegraphics[width=0.70\textwidth]{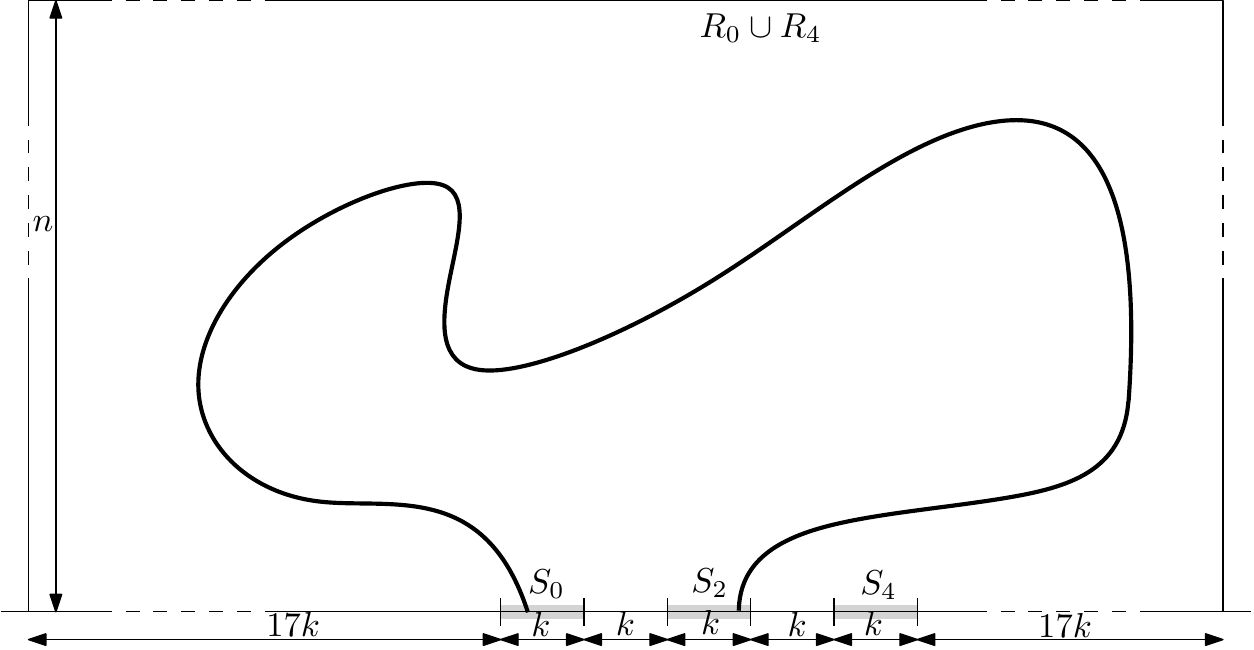}
\caption{\label{fig:2}The event $A_0$, the segments $S_0$, $S_2$, $S_4$ as well as the rectangle $R_0\cup R_4$. Despite what may appear in the picture, the height $n$ of the rectangle $R_0\cup R_4$ is larger than its width $\frac{39}{50}n$.}\end{center}
\end{figure}

\begin{lemma}\label{lem:RSW}
There exists a constant $c_1>0$ such that, for every $\lambda>0$ and every integer $n$,
$$\phi[\calA_0]\ge \frac{c_1}{\lambda^3}\phi[\calV_{[0,\lambda n]\times[0,n]}]^3.$$
\end{lemma}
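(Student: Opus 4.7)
The plan is to combine a pigeonhole over the bottom endpoint of a vertical crossing, the reflection symmetries of $\phi$, the FKG inequality, and a planar-topology argument in order to deduce $\calA_0$ from a triple of vertical crossings landing on $S_0,S_2,S_4$.

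\medskip

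\noindent\textbf{Step 1: pigeonhole.} Write $\alpha:=\phi[\calV_{[0,\lambda n]\times[0,n]}]$ and $M:=\lceil \lambda n/k\rceil\le 50\lambda+1$. Partition the bottom side of $[0,\lambda n]\times[0,n]$ into $M$ consecutive intervals of length $k$; any vertical crossing has its bottom endpoint in one of them. By the union bound some specific interval is the landing point of a vertical crossing with probability $\ge \alpha/M$. Applying the horizontal translation invariance \eqref{eq:1}, after a shift we may assume this interval coincides with $S_0$, so that there is a rectangle $R^{(0)}$ of width $\lambda n$ and height $n$ containing $S_0$ in its bottom such that
\[
V^{(0)}:=\{\exists\text{ vertical crossing of } R^{(0)} \text{ with bottom endpoint in } S_0\}
\]
satisfies $\phi[V^{(0)}]\ge \alpha/M$.

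\medskip

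\noindent\textbf{Step 2: reflections and FKG.} Each measure $\phi$ under consideration is invariant under reflections across vertical axes (directly from the symmetries of $\bbG$ for the infinite-volume measures, and from the construction in Section~\ref{sec:strip} for the strip measures). Reflecting $R^{(0)}$ across $x=3k/2$ and across $x=5k/2$ produces rectangles $R^{(2)},R^{(4)}$ of the same dimensions and sends $S_0$ to $S_2$ and $S_4$ respectively. The corresponding increasing events $V^{(2)},V^{(4)}$ thus satisfy $\phi[V^{(2)}]=\phi[V^{(4)}]=\phi[V^{(0)}]\ge\alpha/M$, and \eqref{eq:FKG} yields
\[
\phi\bigl[V^{(0)}\cap V^{(2)}\cap V^{(4)}\bigr]\;\ge\;(\alpha/M)^3\;\ge\;\frac{c_1}{\lambda^3}\alpha^3.
\]

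\medskip

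\noindent\textbf{Step 3: from the three crossings to $\calA_0$ (the main obstacle).} The remaining task is to show that on $V^{(0)}\cap V^{(2)}\cap V^{(4)}$ there is an open path from $S_0$ to $S_2\cup S_4$ inside $R_0\cup R_4$. The three open paths $\gamma_0,\gamma_2,\gamma_4$ start respectively in $S_0,S_2,S_4$ and reach the tops of their rectangles, and one would like to conclude by planarity that at least one pair of them shares a vertex inside $R_0\cup R_4$. This is the delicate point: a priori the three paths might form three disjoint, horizontally-ordered pillars, and no pairwise intersection would be forced. To resolve this I would refine Step~1 in two ways, at a cost of only constants that can be absorbed in $c_1$. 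First, restrict the pigeonhole to those shifts for which $R^{(0)},R^{(2)},R^{(4)}$ all lie inside $R_0\cup R_4$ and have a common horizontal overlap (a proportion of the $M$ candidate shifts remains admissible, which is why only a constant is lost). Second, add a second, cheap pigeonhole localising the top endpoints of $\gamma_0$ and $\gamma_2$ in such a way that after reflection, the top of $\gamma_0$ is placed to the right of that of $\gamma_2$ while the bottom of $\gamma_0$ (in $S_0$) is to the left of that of $\gamma_2$ (in $S_2$); by planarity the two paths are then forced to share a vertex, producing the connection. An alternative route, more in the spirit of the paper, is to condition on the leftmost vertical crossing $\gamma_0$ and then use the spatial Markov property \eqref{eq:DMP} combined with the monotonicity \eqref{eq:4}: the conditional measure to the right of $\gamma_0$ has effectively wired boundary on $\gamma_0$, which is favourable, and one can iterate Steps 1--2 in that sub-domain to find an open path from (the wired) $\gamma_0$ to $S_2\cup S_4$. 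Either route constitutes the main work; everything else is bookkeeping.
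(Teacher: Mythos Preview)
Your Steps 1 and 2 are fine and indeed parallel the paper's opening moves. The difficulty is entirely in Step 3, and neither of your two proposed fixes closes the gap.

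For the first fix (a second pigeonhole on top endpoints): after the pigeonhole you only know that \emph{some} interval $I$ on the top side is hit with probability $\ge \alpha/M^2$. Reflecting across $x=3k/2$ sends $I$ to $I'$, and the two crossings are forced to meet only if $I$ lies to the right of $I'$, i.e.\ if $I$ lies to the right of the reflection axis. Nothing in the pigeonhole guarantees this; the most likely top interval could perfectly well sit directly above $S_0$. The natural symmetry argument (``half the time the top is on the right'') only works around the axis $x=k/2$ of $S_0$, not around $x=3k/2$, so you cannot force the interlacing you need. This is exactly why the paper first splits into the events $\calT_0,\calL_0,\calL'_0$: the ``easy'' case $\calL_0$ is precisely the one where the crossing wanders far enough sideways that a reflection argument suffices.

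For the second fix (condition on the leftmost crossing $\gamma_0$ and use \eqref{eq:DMP}/\eqref{eq:4}): this is the right starting point, but ``iterate Steps 1--2 in the sub-domain'' does not produce a connection to $\gamma_0$. You would obtain a vertical crossing from $S_2$ in the conditional measure, but there is no reason it should touch $\gamma_0$; the wired boundary on $\gamma_0$ is favourable, yet you still need a quantitative estimate on the connection probability, and that estimate is the whole point. The paper's key idea here --- which your sketch does not contain --- is to also condition on the rightmost crossing $\gamma_4$ from $S_4$, build a \emph{symmetric} domain $\Omega$ between $\gamma_0$ and $\gamma_4$, and then observe that if the crossing from $S_2$ fails to hit either $\gamma_0$ or $\gamma_4$, it must traverse $\Omega$ from bottom to top. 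By the symmetry of $\Omega$ (and a comparison of boundary conditions costing only a factor $q$), this vertical traversal probability \emph{bounds from below} the probability that $\gamma_0$ is connected to $\gamma_4$ inside $\Omega$. That inequality, \eqref{eq:kk} in the paper, is the substitute for self-duality and is the heart of the proof; without it Step 3 remains a wish rather than an argument.
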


Before proving this statement, let us conclude the proof of the proposition. 
 If the events $\calA_j$
occur for every $-1\le j\le 50\rho n$, then $[0,\rho n]\times[0,n]$ is crossed horizontally by an open path. Therefore, using the invariance under translation and the previous lemma in the last inequality, we obtain that 
\begin{equation}\label{eq:8}\phi[\calH_{[0,\rho n]\times[0,n]}]\ge \phi\big[\bigcap_{j=-1}^{50\rho n}\calA_j\big]\stackrel{\eqref{eq:FKG}}\ge \prod_{j=-1}^{50\rho n}\phi[\calA_j]\ge \Big(\frac{c_1}{\lambda^3}\phi[\calV_{[0,\lambda n]\times[0,n]}]^3\Big)^{50\rho n+2}.\end{equation}
The theorem follows by taking $\lambda =\rho$. 

As mentioned above, combining bridges to create long crossings is a standard fact. The real difficulty of the theorem remains hidden in the proof of Lemma~\ref{lem:RSW} below. Before diving into the proof, let us explain the strategy. 

Assume that the segments $S_0$, $S_2$ and $S_4$ are all connected to the top of the rectangle. We would like to show that with good probability, two of these segments are connected. 
The idea will be to show that the left-most vertical crossing $\Gamma_1$ starting from $S_0$ and the right-most vertical crossing $\Gamma_2$ starting from $S_4$ can be used to create a symmetric domain, i.e.~a domain that enjoys some rotation or reflection symmetry.  Then, we will show that, conditioned on $\Gamma_1$ and $\Gamma_2$, the symmetric domain is ``bridged'' by an open path with good probability.
The idea of using symmetric domains goes back to \cite{BefDum12}, and was later used in \cite{DumSidTas14}. In previous works, estimates on crossing probabilities were obtained using the self-duality of the model. Instead of using self-duality (which is unavailable here), we used that conditioned on $\Gamma_1$ and $\Gamma_2$,  the vertical crossing from $S_2$ to the top must in particular cross the symmetric domain, and that it must do it in such a way that its connected component does not intersect $\Gamma_1$ or $\Gamma_2$. It is possible to use \eqref{eq:CBC} to prove that the probability of this event is in fact smaller than the conditional probability that the symmetric domain is bridged by an open path. In conclusion,  we replace the estimate obtained by duality by an estimate obtained thanks to the existence of this other path from $S_2$ to the top. 

\begin{proof}[Proof of Lemma~\ref{lem:RSW}] By increasing $\lambda$ if needed, we may assume that $\lambda\ge1$. Set $R:=[0,\lambda n]\times[0,n]$ and $C:=\lambda n/k$. Define the events (see Fig.~\ref{fig:3})
\begin{align*}
\calT_j&:=\{S_j\lr[R_j]{\rm T}_{R_j}\},&\\
\calL_j&:=\{S_j\lr[R_{j-13}]{\rm L}_{R_{j+4}}\}&&\text{and}\quad\qquad\ \calR_j:=\{S_j\lr[R_{j+13}]{\rm R}_{R_{j-4}}\},\\
\calL_j'&:=\{S_j\lr[R_j]{\rm L}_{R_j}\}\setminus\calL_j&&\text{and }\quad\qquad\calR_j':=\{S_j\lr[R_j]{\rm R}_{R_j}\}\setminus\calR_j.\end{align*}
For $\calV_{R}$ to occur, one of the segments $S_j$ for $0\le j<C$ must be connected either to the left, top or right of $R_j$ (in $R$). If it is to the left, then $\omega$ is either in $\calL_j$ or $\calL'_j$, and similarly for the right. The union bound implies that
\begin{equation}\label{eq:p}\max\{\phi[\calT_j],\phi[\calL_j],\phi[\calR_j],\phi[\calL_j'],\phi[\calR_j']:0\le j<C\}\ge \frac{\phi[\calV_R]}{6C}.\end{equation}
Thus, translation and reflection invariances give that
\begin{equation}
  \label{eq:9} \max\{\phi[\calT_0],\phi[\calL_0],\phi[\calL_0']\}\ge \frac{\phi[\calV_R]}{6C}.
\end{equation}
\begin{figure}\begin{center}
\includegraphics[width=0.365\textwidth]{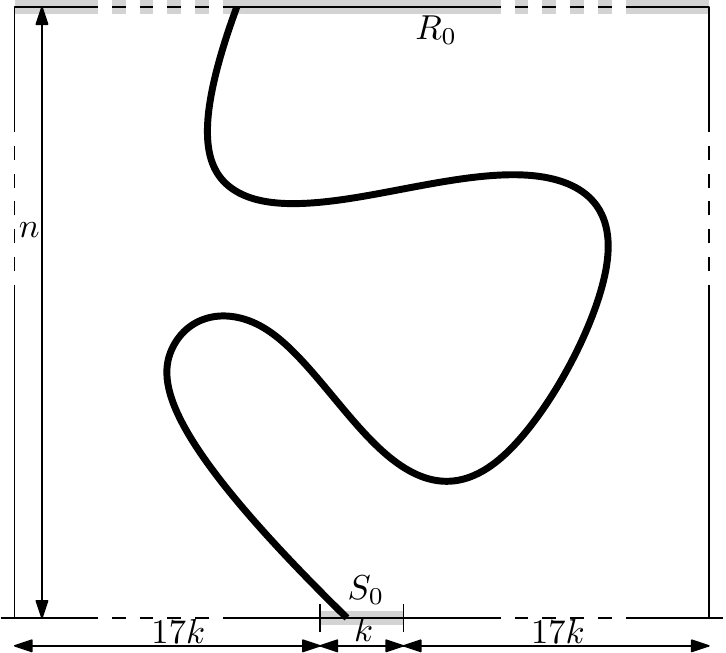}\includegraphics[width=0.29\textwidth]{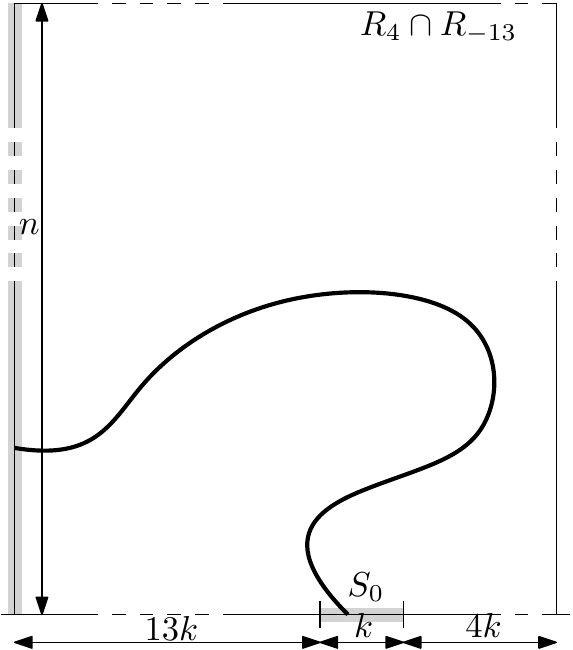}\includegraphics[width=0.365\textwidth]{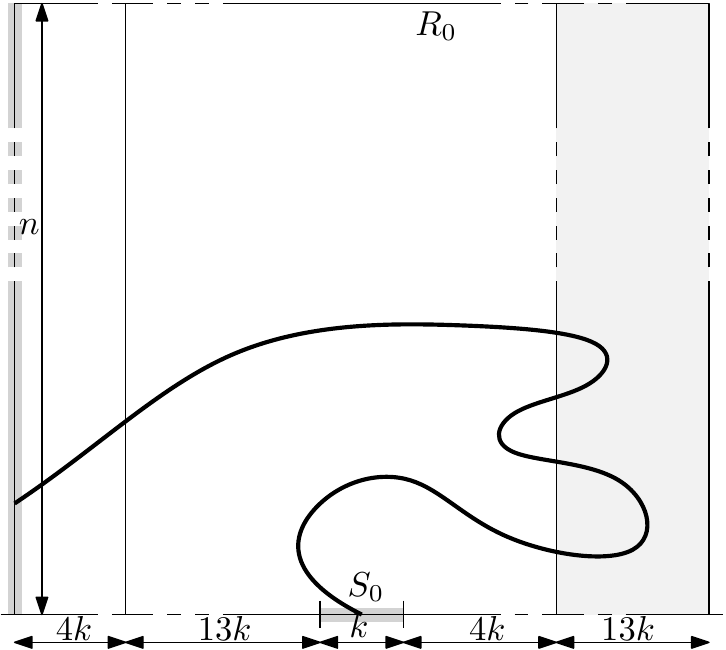}
\caption{\label{fig:3}The three events $\calT_0$, $\calL_0$ and $\calL'_0$.}\end{center}
\end{figure}

The proof is easy to conclude when $\phi[\calL_0]\ge \phi[\calV_R]/6C$. Indeed, in this case, by reflection and invariance under translations, we have that $\phi[\calR_0]$ and $\phi[\calL_4]$ are larger than or equal to ${\phi[\calV_R]}/{6C}$. Thus,
\begin{equation}
  \label{eq:10}
  \phi[\mathcal A_0]\ge \phi[ \calR_0\cap \calL_4]\stackrel{\eqref{eq:FKG}}\ge \Big(\frac{\phi[\calV_R]}{6C}\Big)^2.
\end{equation}
Therefore, for the rest of the proof, we can assume that
\begin{equation}
\max\{\phi[\calT_0],\phi[\calL_0']\}\ge \frac{\phi[\calV_R]}{6C}.
\end{equation}
Assume for a moment that we proved that for  $\calC$ equal to  $\calT$ or  $\calL'$, 
\begin{equation}\label{eq:kk}\phi[\calA_0|\calC_0\cap\calC_4]\ge \tfrac1q\cdot\phi[\calC_2\setminus(\calA_0\cup\calA_2) |\calC_0\cap\calC_4].\end{equation}
Thus, $\phi[\calA_0]=\phi[\calA_2]$ implies that
\begin{align*}(q+2)\phi[\calA_0]&\stackrel{\phantom{\eqref{eq:kk}}}\ge q\,\phi[\calA_0\cap\calC_0\cap\calC_4]+\phi[(\calA_0\cup\calA_2)\cap\calC_0\cap\calC_4]\\
                                &\stackrel{\eqref{eq:kk}}\ge \phi[\calC_0\cap\calC_2\cap\calC_4]\stackrel{\eqref{eq:FKG}}\ge \phi[\calC_0]^3.\end{align*}
                              Combined with \eqref{eq:10}, this implies the lemma with $c_1:=(6C)^{-3}/(q+2)$>0. 
                                                            To finish the proof completely, we now prove \eqref{eq:kk}. The construction is different depending on whether $\calC$ is equal to $\calT$ or $\calL'$.

\paragraph{Proof of \eqref{eq:kk} with $\calC=\calT$.} 
For $\omega\in\calT_0\cap\calT_4$, let $\Gamma_1(\omega)$ be the left-most open path in $R_0$ from $S_0$ to ${\rm T}_{R_0}$ and $\Gamma_2(\omega)$ be the right-most open path in $R_4$ from $S_4$ to ${\rm T}_{R_4}$. 
It is sufficient to show that for every $\gamma_1$ and $\gamma_2$ such that  $\{\Gamma_1=\gamma_1,\Gamma_2=\gamma_2\}\subset\calT_0\cap\calT_4$, 
\begin{equation}\label{eq:ak}\phi[\calA_0|\Gamma_1=\gamma_1,\Gamma_2=\gamma_2]\ge \tfrac1q\cdot\phi[\calT_2\setminus(\calA_0\cup\calA_2) |\Gamma_1=\gamma_1,\Gamma_2=\gamma_2].\end{equation}
Consider the square ${\rm Sym}:=[-17k,22k]\times[0,39k]$ and let $\Omega$ be the points in ${\rm Sym}$ that are between $\gamma_1$ and $\gamma_2$, i.e.~on the right of $\gamma_1$ and the left of $\gamma_2$. 

If $\gamma_1$ is connected to $\gamma_2$ in $\Omega$, then $S_0$ is connected to $S_4$ in $R_0\cup R_4$, and in particular $\calA_0$ is satisfied. Hence
\begin{align*}
  \phi[\calA_0\:|\:\Gamma_1=\gamma_1,\Gamma_2=\gamma_2]&\ge\phi[\gamma_1\longleftrightarrow\gamma_2\text{ in }\Omega\: |\: \Gamma_1=\gamma_1,\Gamma_2=\gamma_2].
\end{align*}
Observe that conditioned on $\Gamma_1=\gamma_1$ and $\Gamma_2=\gamma_2$, the boundary conditions on $\Omega$ are  dominating the boundary conditions $\xi$ with vertices of $\gamma_1$ wired together, and vertices  $\gamma_2$ wired together. Therefore, by \eqref{eq:DMP} and \mon, we have
\begin{equation*}
  \phi[\gamma_1\longleftrightarrow\gamma_2\text{ in }\Omega\: |\: \Gamma_1=\gamma_1,\Gamma_2=\gamma_2]\ge \phi_\Omega^{\xi}[\gamma_1\longleftrightarrow\gamma_2]\ge\phi_{\rm Sym}^{\rm mix}[\gamma_1\longleftrightarrow\gamma_2\text{ in }\Omega],
\end{equation*}
where the ${\rm mix}$ boundary conditions are wired on ${\rm L}_{\rm Sym}$, wired on the ${\rm R}_{\rm Sym}$, and free everywhere else. The two equations above give
\begin{equation}
  \label{eq:11}
  \phi[\calA_0\:|\:\Gamma_1=\gamma_1,\Gamma_2=\gamma_2]\ge\phi_{\rm Sym}^{\rm mix}[\gamma_1\longleftrightarrow\gamma_2\text{ in }\Omega]\ge\phi_{\rm Sym}^{\rm mix}[{\rm L}_{\rm Sym}\longleftrightarrow{\rm R}_{\rm Sym}] .
\end{equation}
\medbreak
On the other hand, for $\omega\in \calT_2\setminus(\calA_0\cup\calA_2)$, $\omega$ must contain a crossing of $\Omega$ from bottom to top included in a connected component in $\Omega$ that does not touch $\gamma_1$ or $\gamma_2$. Calling this  event $\calE$ and using \eqref{eq:DMP} and \mon, we find that
 $$\phi[\calT_2\setminus(\calA_0\cup\calA_2) |\Gamma_1=\gamma_1,\Gamma_2=\gamma_2]\le \phi^{\xi'}_\Omega[\calE],$$
 where the boundary conditions $\xi'$ are 0 on $\gamma_1$ and $\gamma_2$, and 1 everywhere else. Then using \eqref{eq:DMP}, \mon~and symmetries, we get
 \begin{align*}\phi^{\xi'}_\Omega[\calE]\le \phi_{\rm Sym}^{\rm mix'}[\calE]\le \phi_{\rm Sym}^{\rm mix'}[{\rm T}_{\rm Sym}\longleftrightarrow{\rm B}_{\rm Sym}]= \phi_{\rm Sym}^{*-\rm mix}[{\rm L}_{\rm Sym}\longleftrightarrow{\rm R}_{\rm Sym}],\end{align*}
where the $*-{\rm mix}$ boundary conditions are wired on ${\rm L}_{\rm Sym}\cup{\rm R}_{\rm Sym}$, and free elsewhere, and ${\rm mix}'$ is the rotation of $*-{\rm mix}$ by $\pi/2$.
In conclusion
\begin{equation}\label{eq:o}\phi[\calT_2\setminus(\calA_0\cup\calA_2) |\Gamma_1=\gamma_1,\Gamma_2=\gamma_2]\le \phi_{\rm Sym}^{*-\rm mix}[{\rm L}_{\rm Sym}\longleftrightarrow{\rm R}_{\rm Sym}].\end{equation}
Now, \eqref{eq:FI} implies that the probabilities with ${\rm mix}$ and $*-{\rm mix}$ boundary conditions are related by a factor at most $q$. Hence, \eqref{eq:11} and \eqref{eq:o} imply \eqref{eq:ak}, and therefore \eqref{eq:kk} by averaging over every $\gamma_1,\gamma_2$.

\paragraph{Proof of \eqref{eq:p} with $\calC=\calL'$.}The proof is based on the same idea, except that the construction of $\Omega$ and ${\rm Sym}$ is slightly more complicated (in particular ${\rm Sym}={\rm Sym}(\Omega)$ will depend on $\Omega$).  Let $\omega\in\calL'_0\cap\calL'_4$. Let $\Gamma_1(\omega)$ be the left-most open path in $R_0$ from $S_0$ to ${\rm L}_{R_4}$ and $\Gamma_2(\omega)$ be the right-most open path in $R_4$ from $S_4$ to ${\rm L}_{R_4}$. We wish to prove the equivalent of \eqref{eq:ak} and therefore fix $\gamma_1$ and $\gamma_2$.

Introduce the vertical line $\ell=\{5k\}\times\bbR$ and consider the following paths:
\begin{itemize}[noitemsep]
\item Let $\gamma'_1$ be the part of $\gamma_1$ going from $S_0$ to the first intersection $x$ with $\ell$;
\item Let $\gamma'_2$ be the part of $\gamma_2$ bordering the connected component of $x$ in $\bbH\setminus \gamma_2$, where  $\bbH$ is the half-plane on the right of $\ell$;
\item Let $\tilde \gamma_1$ be the part of the reflection (with respect to $\ell$) of $\gamma'_1$ going from $\ell$ to the first intersection with $\gamma'_2$;
\item Let $\tilde\gamma_2$ be the part of the reflection of $\gamma'_2$ going from $\ell$ to the first intersection with $\gamma'_1$;
\end{itemize}
The assumption that $\omega\in \calL'_0\cap\calL'_4$ guarantees that the point $x$ exists and that the paths intersect. Let ${\rm Sym}(\Omega)$ be everything enclosed in $\gamma'_1\cup\tilde\gamma_1\cup\gamma'_2\cup\tilde\gamma_2$. Let $\Omega$ be the subdomain of ${\rm Sym}(\Omega)$ made of points between $\gamma_1$ and $\gamma_2$; see Fig.~\ref{fig:4}. 

Now, the proof runs as before. The boundary conditions induced by $\Gamma_1=\gamma_1$ and $\Gamma_2=\gamma_2$ on $\Omega$ dominate 
the boundary conditions $\xi$ equal wired on $\gamma_1$ and wired on $\gamma_2$, and free on the rest of the boundary of $\Omega$.  Also, if $\gamma'_1$ is connected to $\gamma'_2$ in ${\rm Sym}(\Omega)$, then $\gamma_1$ is connected to $\gamma_2$ in $\Omega$. Thus, exactly in the same way as we obtained \eqref{eq:11}, we find
\begin{equation*}\phi[\calA_0|\Gamma_1=\gamma_1,\Gamma_2=\gamma_2]\ge\phi^{\rm mix}_{{\rm Sym}(\Omega)}[\gamma_1'\longleftrightarrow\gamma_2'],\end{equation*}
where the {\rm mix} boundary conditions are wired on $\gamma'_1$, wired on $\gamma'_2$, and free everywhere else.

Also, observe that for $\calL_2\setminus(\calA_0\cup\calA_2) $ to occur, $\Omega$ must contain an open path from $\tilde\gamma_1$ to $\tilde\gamma_2$ which is not connected to $\gamma_1$ or $\gamma_2$ in $\Omega$. Exactly as we obtained \eqref{eq:o}, we find \begin{align*}\phi[\calL_2\setminus(\calA_0\cup\calA_2) |\Gamma_1=\gamma_1,\Gamma_2=\gamma_2]& \le \phi^{*-\rm mix}_{{\rm Sym}(\Omega)}[\gamma_1'\longleftrightarrow\gamma'_2],\end{align*}
where the reader will easily deduce from the previous case the definition of $*-{\rm mix}$. The end of the proof is the same.
\end{proof}
\begin{figure}[htbp]\begin{center}
\includegraphics[width=0.463\textwidth]{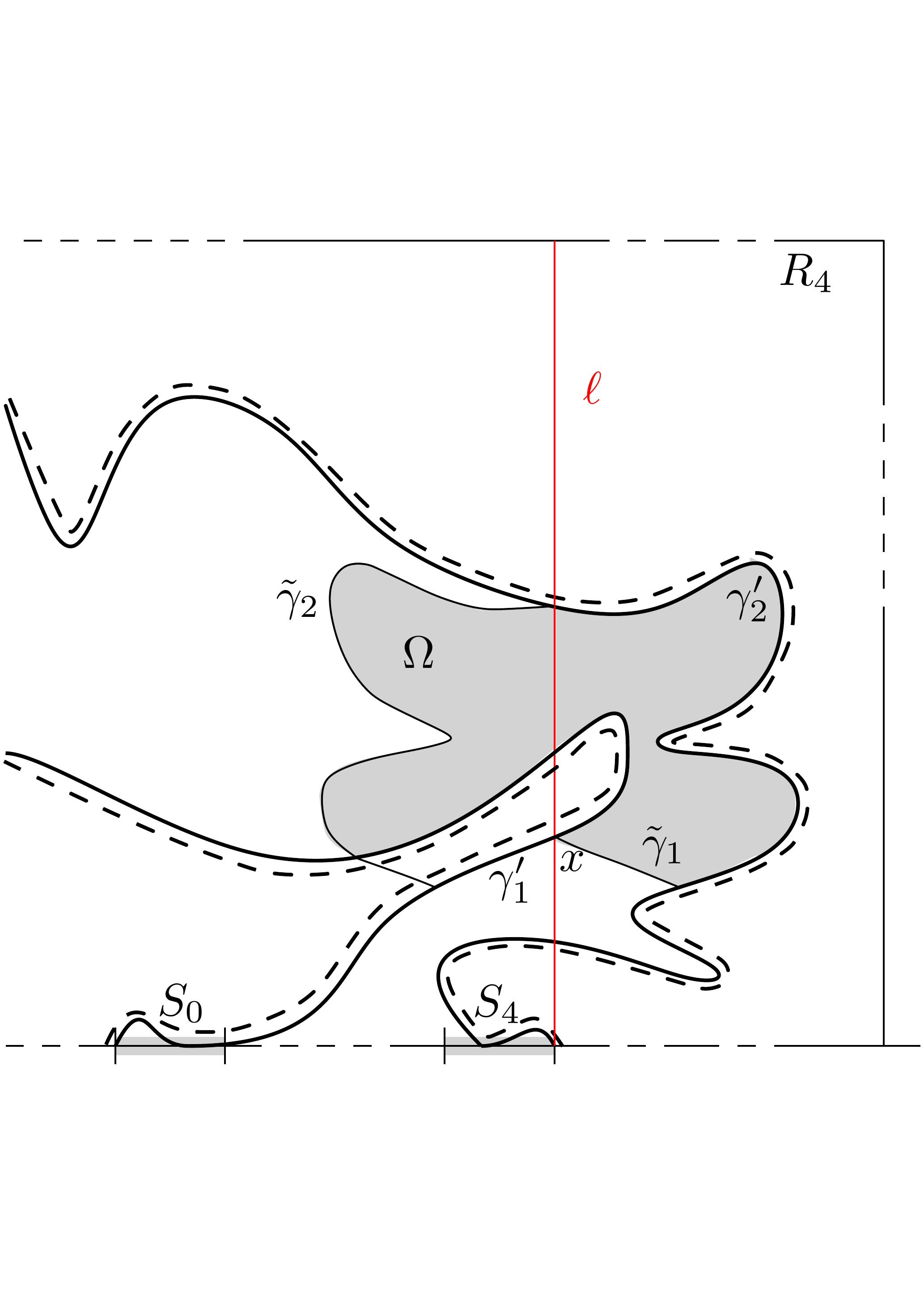}
\caption{\label{fig:4}The construction of $\Omega$ in the second case.}\end{center}
\end{figure}

\section{Crossing probabilities with wired boundary conditions}\label{sec:4}

In this section, we use coarse graining ideas to control crossing probabilities with wired boundary conditions.

\begin{lemma}\label{lem:a}
For every $C\ge 2$, there exists $\delta>0$ such that, if $\phi_{\Lambda_{Ck}}^1[\Lambda_k\longleftrightarrow\partial\Lambda_{2k}]<\delta$ for some $k$, then there exists $c>0$ such that  for every $n,N\ge2$ and every $x\in \Lambda_n$,
$$\phi_{\Lambda_{n}}^1[\text{the connected component of $x$ in $\Lambda_{n-Ck}$ has volume $N$}]\le \exp(-cN).$$
\end{lemma}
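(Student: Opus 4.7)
The plan is a Peierls-style coarse-graining argument: the scale-$k$ hypothesis will be upgraded to exponential decay of the cluster volume by combining it with the random-cluster analogue of independence provided by \eqref{eq:DMP} and \eqref{eq:CBC}.

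First, I tile $\mathbb{Z}^2$ into disjoint $k\times k$ boxes $B_i := x_i + [0,k)^2$ with centers $x_i \in k\mathbb{Z}^2$, and for each $i$ with $B_i^{*} := x_i + \Lambda_{Ck} \subset \Lambda_n$ I set $A_i := \{B_i \lr \partial(x_i + \Lambda_{2k})\}$, realized by open edges inside $x_i + \Lambda_{2k}$. By translation invariance the hypothesis reads $\phi^1_{B_i^*}[A_i] < \delta$. The decoupling estimate I want to establish is that, for any indices $i_1,\ldots,i_m$ whose enlarged boxes $B_{i_j}^*$ are pairwise disjoint,
\begin{equation}\label{eq:plan:decoup}
\phi_{\Lambda_n}^1\Big[\bigcap_{j=1}^m A_{i_j}\Big] \le \delta^m.
\end{equation}
I prove \eqref{eq:plan:decoup} by induction on $m$. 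Conditioning on the edges outside $B_{i_1}^*$, the property \eqref{eq:DMP} expresses the inner law as $\phi_{B_{i_1}^*}^\xi$ for some boundary conditions $\xi$; since $A_{i_1}$ is increasing and $\xi$ is dominated by the wired ones, \eqref{eq:CBC} yields $\phi_{B_{i_1}^*}^\xi[A_{i_1}] \le \phi_{B_{i_1}^*}^1[A_{i_1}] < \delta$. As $A_{i_2},\ldots,A_{i_m}$ depend only on edges in the pairwise-disjoint boxes $B_{i_j}^*$ for $j\ge 2$, they are measurable outside $B_{i_1}^*$ and factor out of the conditional expectation.

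Next comes the geometric accounting. Assume the connected component $\mathcal{C}(x)$ of $x$ in $\Lambda_{n-Ck}$ has volume at least $N$, with $N > 16k^2$. Since each box contains $k^2$ vertices, $\mathcal{C}(x)$ meets a set $\mathcal{T}$ of at least $T := \lceil N/k^2 \rceil$ box-indices. For every $i \in \mathcal{T}$ the event $A_i$ holds, since otherwise $\mathcal{C}(x)$ would be entirely contained in $x_i + \Lambda_{2k}$, contradicting $|\mathcal{C}(x)| > 16k^2$. Moreover $\mathcal{T}$ is $*$-connected in the box-index lattice and contains the index of $x$'s box, so the standard lattice-animal bound gives at most $C_0^T$ possible choices of $\mathcal{T}$ for some universal $C_0$. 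Finally, from any such $\mathcal{T}$ one extracts a subfamily $\mathcal{T}' \subset \mathcal{T}$ of size at least $T/(2C)^2$ whose enlarged boxes are pairwise disjoint, by restricting the indices to a suitable coset of the sublattice $(2C)k\mathbb{Z}^2 \subset k\mathbb{Z}^2$.

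Combining \eqref{eq:plan:decoup} applied to $\mathcal{T}'$ with the enumeration over $\mathcal{T}$ yields
\begin{equation*}
\phi_{\Lambda_n}^1\bigl[|\mathcal{C}(x)| \ge N\bigr] \;\le\; \sum_{T \ge N/k^2} C_0^T\, \delta^{T/(2C)^2},
\end{equation*}
which decays exponentially in $N$ as soon as $\delta$ is chosen small enough that $C_0\, \delta^{1/(2C)^2} \le 1/2$; the trivial range $N \le 16k^2$ is absorbed into the constant $c$. The conceptual heart of the proof is the decoupling step \eqref{eq:plan:decoup}: in Bernoulli percolation it is immediate from independence, but here it relies crucially on the fact that any boundary condition is dominated by the wired one, so the hypothesis applies uniformly over the random outside configuration. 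The ``buffer'' $\Lambda_n \setminus \Lambda_{n-Ck}$ appearing in the statement is precisely what guarantees that each $B_i^*$ fits inside $\Lambda_n$, allowing us to invoke the scale-$k$ hypothesis everywhere.
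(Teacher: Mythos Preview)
Your argument is correct and follows essentially the same route as the paper's proof: a Peierls-type coarse-graining where a large cluster forces a connected family of boxes each witnessing the scale-$k$ connection event, from which one extracts a well-separated subfamily and decouples via \eqref{eq:DMP} and \eqref{eq:CBC}. Your presentation is in fact more explicit than the paper's (you spell out the decoupling inequality \eqref{eq:plan:decoup} and its inductive proof, whereas the paper compresses this into one sentence); the only cosmetic discrepancies are that on a general biperiodic $\mathbb{G}$ the box $B_i$ need not contain exactly $k^2$ vertices (replace $k^2$ by $|\Lambda_k|$ as the paper does), and the sublattice spacing should be taken strictly larger than $2C$ to ensure the enlarged boxes are genuinely edge-disjoint rather than just touching.
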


\begin{proof}Fix a constant $\mu<\infty$ such that the number of connected sets of size $\ell$ containing the origin in $\bbZ^2$ is smaller than $\mu^\ell$. (The existence of $\mu$ is a standard fact in the study of ``animals'' in a graph, see e.g.~\cite{Gri99a}).     
If the connected component of $x$ is of size $N$, there exists a connected set $S$ of $N/|\Lambda_k|$ vertices in $\bbZ^2$ containing $x$ such that for every $y\in S$, the box of size $k$ around $y$ is connected to the boundary of the box of size $2k$ around $y$. One may choose a subset of $4^{-C}N/|\Lambda_k|$ vertices of $S$ which are at a distance $2Ck$ of each other, so that the union bound, \eqref{eq:DMP} and  \eqref{eq:CBC} imply that
$$\phi_{\Lambda_{n}}^1[\text{the connected component of $x$ in $\Lambda_{n-Ck}$ has volume $N$}]\le (\mu \delta^{4^{-C}})^{N/|\Lambda_k|}.$$
The proof follows by choosing $\delta$ small enough.\end{proof}

This lemma shows that if {\bf non(SubCrit)}, then $\phi_{\Lambda_{Ck}}^1[\calA_k]\ge \delta$ for every $k$. It is simple to apply the argument of the previous section to show that crossing probabilities of a rectangle of size $n\times\rho n$ in a box of size $Cn$ with boundary conditions 1 do not tend to zero. In fact, we wish to show a slightly stronger result dealing with probability measures in strips. This corollary will be instrumental in the proof of Theorem~\ref{thm:main}. 
\begin{corollary}\label{cor:1}
For every $\rho>0$ and $\lambda\ge 1$, there exists $c_3=c_3(\lambda,\rho)>0$ such that \begin{itemize}
\item if {\bf non(SubCrit)}, then for every $n$, $\displaystyle\phi^1_{S_{\lambda n}}[\calH_{[0,\rho n]\times[0,n]}]\ge c_3,$
\item if {\bf non(SupCrit)}, then for every $n$, $\phi^0_{S_{\lambda n}}[\calV_{[0,\rho n]\times[0,n]}]\le 1-c_3.$
\end{itemize}
Furthermore, for $\rho,\lambda \ge 2$, we can choose $c_3=c_3(\lambda,\rho)=\lambda^{-C\rho}$ where $C>0$ is a constant independent of $\rho$ and $\lambda$. 
\end{corollary}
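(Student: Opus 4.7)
The plan is to prove the first bullet (the non(SubCrit) case); the second follows by planar duality, since the dual of $\phi^1_{S_{\lambda n}}$ is a free-boundary measure on the dual strip, and $\calH_R$ is complementary to the dual crossing $\calV_{R^*}$. Since Proposition~\ref{prop:q} applies to the strip measure $\phi^1_{S_{\lambda n}}$, it suffices to establish a lower bound of the form $\phi^1_{S_{\lambda n}}[\calV_{R'}]\ge c$ for some rectangle $R'$ of fixed aspect ratio: the horizontal crossing of $R=[0,\rho n]\times[0,n]$ will then follow from Theorem~\ref{thm:RSW} together with FKG applied to $O(\rho)$ translated horizontal crossings. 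For the explicit $c_3=\lambda^{-C\rho}$, I would follow the constants through and FKG-concatenate $O(\rho)$ shifted unit-rectangle crossings, each with probability at least $\lambda^{-C'}$.

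The base estimate is provided, as explained in the paragraph preceding the corollary, by Lemma~\ref{lem:a}: under non(SubCrit), for every $C\ge 2$ there exists $\delta=\delta(C)>0$ with
\[
  \phi^1_{\Lambda_{Ck}}\bigl[\Lambda_k\longleftrightarrow\partial\Lambda_{2k}\bigr]\ \ge\ \delta,\qquad\forall k\ge 1.
\]
Exploiting the rotation and reflection symmetries of $\phi^1_{\Lambda_{Ck}}$, the annulus crossing decomposes into connections from $\Lambda_k$ to each of the four sides of $\partial\Lambda_{2k}$, each occurring with probability at least $\delta/4$. Combining these ``radial'' connections via \eqref{eq:FKG} and an RSW-style bridging argument in the spirit of Lemma~\ref{lem:RSW} produces rectangle crossings of bounded aspect ratio inside the box, with probability bounded below by a constant depending only on $\delta(C)$.

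The main obstacle will be to transfer these box crossing bounds to the strip measure $\phi^1_{S_{\lambda n}}$, because the natural monotonicity \mon goes the wrong way: if $B$ is a box contained in the strip, \mon gives $\phi^1_{S_{\lambda n}}\le\phi^1_B$ on increasing events in $B$, so a lower bound on $\phi^1_B$ does not descend to $\phi^1_{S_{\lambda n}}$. I would get around this by arguing directly inside the strip: the union-bound/\eqref{eq:DMP}/\eqref{eq:CBC} scheme used to prove Lemma~\ref{lem:a} relies only on the base estimate above (via \eqref{eq:CBC}, the relevant conditional annulus probabilities at scale $k$ are uniformly controlled by $\phi^1_{\Lambda_{Ck}}$), and so it applies verbatim to any measure satisfying the properties of Section~\ref{sec:0}, in particular to $\phi^1_{S_{\lambda n}}$. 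Consequently, if the vertical crossing of $[0,n]\times[0,n]$ in $\phi^1_{S_{\lambda n}}$ had too small a probability, one would derive exponential decay of the volume of interior clusters in the strip, which via the ordering $\phi^1_{\bbG}\le\phi^1_{S_{\lambda n}}$ (obtained by applying \mon to $(\Lambda_N,1)\preceq_1(R_M,1)$ and then letting $N,M\to\infty$) would in turn contradict non(SubCrit).

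Once the vertical crossing bound in the strip is in hand, Proposition~\ref{prop:q} converts it into a horizontal crossing bound of bounded aspect ratio, and \eqref{eq:FKG} applied to $O(\rho)$ translated unit rectangles (using the horizontal translation invariance of $\phi^1_{S_{\lambda n}}$) yields the quantitative $\lambda^{-C\rho}$ form. The whole argument is symmetric under duality: the second bullet is obtained by running the same scheme for the dual model, using that the dual version of Lemma~\ref{lem:a} combined with the duality relations of Section~\ref{sec:duality} provides the analogous base estimate for $\phi^0_{S_{\lambda n}}$.
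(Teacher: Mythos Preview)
You correctly isolate the real difficulty—transferring the wired-box lower bound to the strip measure, where \mon\ points the wrong way—but the contradiction argument you propose does not close this gap. The Peierls scheme behind Lemma~\ref{lem:a} bounds each conditional annulus-crossing probability, via \eqref{eq:DMP} and \eqref{eq:CBC}, by the \emph{wired-box} quantity $\phi^1_{\Lambda_{Ck}}[\Lambda_k\!\leftrightarrow\!\partial\Lambda_{2k}]$; that number, and only that number, is the input to the scheme. Under {\bf non(SubCrit)} this input is $\ge\delta$ for every $k$, so the scheme produces no decay whatsoever, irrespective of how small $\phi^1_{S_{\lambda n}}[\calV_{[0,n]^2}]$ might be. There is no route from ``small strip crossing'' back to ``small wired-box annulus'' (indeed $\phi^1_{S_{\lambda n}}\le\phi^1_{\Lambda_{Ck}}$, the wrong inequality), so the implication ``small strip crossing $\Rightarrow$ exponential cluster decay in the strip'' is unsupported. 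Even granting it, a contradiction argument would at best give positivity for each fixed $n$, not a bound uniform in $n$, and passing from decay under $\phi^1_{\bbG}$ to decay under $\phi^1_{\Lambda_{2n}}$ (the content of {\bf (SubCrit)}) again runs into $\phi^1_{\bbG}\le\phi^1_{\Lambda_{2n}}$.

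The paper supplies a constructive multiscale idea you are missing: one \emph{brings the wired boundary in from infinity}. Working first in $S_n$, set $n_i=2^in$, $R_i=[-16n_i,16n_i]\times[0,n]$, and let $\calE_i$ be the event that both $R_i^\pm=[\pm4n_i,\pm8n_i]\times[0,n]$ are crossed vertically. Slicing the tall rectangle $[4n_i,8n_i]\times[0,n_i]$ into $2^i$ height-$n$ strips and using \mon\ turns the base estimate at scale $n_i$ into $\phi^1_{R_i}[\calE_i]\ge e^{-C2^{1-i}}$. Conditioning on the outermost crossings in $\calE_i$ and applying \eqref{eq:DMP}/\mon\ gives $\phi^1_{R_j}[\calE_{i-1}\mid\calE_i]\ge\phi^1_{R_{i-1}}[\calE_{i-1}]$, so the telescoping product $\prod_{i\ge1}e^{-C2^{1-i}}$ converges and, after $j\to\infty$, yields $\phi^1_{S_n}[\calV_{[4n,8n]\times[0,n]}]\ge e^{-4C}$. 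Applying this at height $\lambda n$ and then invoking \eqref{eq:8} gives the horizontal crossing of $[0,\rho n]\times[0,n]$ together with the explicit $\lambda^{-C\rho}$ bound.
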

This corollary illustrates the fact that the difficulty in Section~\ref{sec:3} will be to show that either the probabilities of crossing with free (resp.~wired) boundary conditions go exponentially fast to 0 (resp.~1), or that they remain uniformly bounded away from 0 and 1. Indeed, with wired boundary conditions, crossing probabilities remain bounded away from  zero, while with free, they remain bounded away from 1. At the risk of repeating ourselves after what we wrote in the introduction, Theorem~\ref{thm:main} is really dealing with the impact of boundary conditions.

The strategy of the proof is the following (see Fig~\ref{fig:R_i}). We will show that, in a strip of fixed height $n$ with wired boundary conditions on bottom and top, one can ``bring'' wired boundary conditions from $-\infty$ and $\infty$ to a distance $8n$ by asking for the existence of vertical crossings in rectangles of width $2^in$ for $i\ge3$. Proceeding from infinity guarantees that, at each step, previous crossings induce wired boundary conditions at a small distance. This enables us to use the estimate on $\phi_{\Lambda_{Ck}}^1[\calA_k]$ to show that the probability of having crossings (say) at step $i$ knowing those at step $i+1$ is of order $1-\exp(-c/2^i)$, and that therefore the probability of having all of them is bounded away from 0 uniformly in $n$.

\begin{figure}[htbp]\begin{center}
\includegraphics[width=1.00\textwidth]{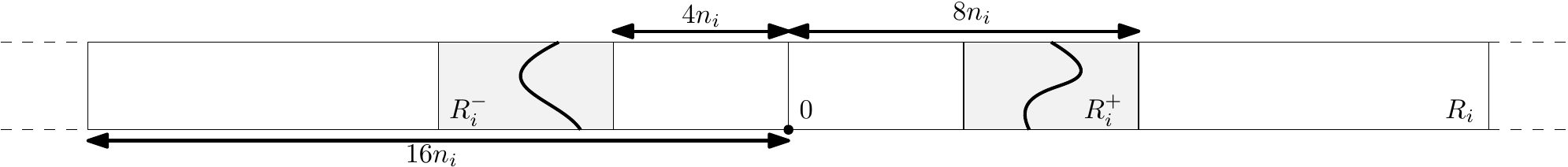}
\caption{\label{fig:R_i}
The rectangles $R_i$, $R_i^-$ and $R_i^+$. Conditionally on the vertical crossings $R_i^-$ and $R_i^+$, induced boundary conditions on $R_{i-1}$ dominate wired boundary conditions on the boundary of $R_{i-1}$.
}\end{center}
\end{figure}

\begin{proof}
  The second statement follows from the first one by duality so we focus on the first one. By monotonicity, one can assume that $\rho\ge 1$. The previous lemma combined with {\bf non(SubCrit)} implies the existence of $C>0$ such that for every $k\ge1$,
$$\phi_{\Lambda_{22k}}^1[\calA_{k}]\ge 4 e^{-C}.$$
If the event $\calA_{k}$ occurs, then one of the four rotated versions of $\mathcal V_{[-2k,2k]\times[k,2k]}$ (we consider the four rotations with center 0 and angle $j\tfrac \pi 2$, $j=0,1,2,3$) must occur. Hence, by symmetry and the union bound, we deduce that\begin{equation}
  \label{eq:12}
\phi_{[-16k,16k]\times[0,k]}^1[\mathcal V_{[4k,8k]\times[0,k]}]\stackrel{\mon}\ge  \phi_{\Lambda_{22k}}^1[\mathcal V_{[-2k,2k]\times[k,2k]}]\ge e^{-C}. 
\end{equation}
For $i\ge 0$ and $n$, set $n_i=2^in$ and define the rectangles (see Fig.~\ref{fig:R_i})
\begin{align*}
R_i&:=[-16n_{i},16n_{i}]\times[0,n],\\
R_i^-&:=[-8n_{i},-4n_{i}]\times[0,n],\\
R_i^+&:=[4n_{i},8n_{i}]\times[0,n],
\end{align*}
and let $\mathcal E_i$ be the event that $R_i^-$ and $R_i^+$ are crossed vertically. Thus, using in the second inequality that a vertical crossing of $[4n_{i},8n_{i}]\times[0,n_i]$ crosses vertically $2^i$ translates of $[4n_i,8n_i]\times[0,n]$, we deduce that \begin{align}
 \phi_{R_i}^1[\mathcal E_i]\stackrel{\eqref{eq:FKG}}\ge\phi_{R_i}^1[\mathcal V_{R_i^+}]^2\stackrel{\eqref{eq:4}}\ge \phi_{[-16n_i,16n_i]\times[0,n_i]}^1[\mathcal V_{[4n_{i},8n_{i}]\times[0,n_i]}]^{2^{1-i}}\stackrel{\eqref{eq:12}}\ge e^{-C2^{1-i}}.\label{eq:14}
\end{align}
Fix $j\ge 3$. Consider the rectangle $R_{j}$ and assume that the event $\mathcal E_i$ occurs for some $i\in \{1,\ldots,j\}$. Conditioning on the left-most vertical crossing in $R_i^-$ and the right-most vertical crossing in $R_i^+$ and considering the boundary conditions induced on the area $\Omega$ between them, \eqref{eq:DMP} and \eqref{eq:4} imply 
\begin{equation}
  \label{eq:15}
  \phi_{R_j}^1[\mathcal E_{i-1}\:|\:\mathcal E_i]\ge  \phi_{R_{i-1}}^1[\mathcal E_{i-1}] \ge  e^{-C 2^{1-i}}.
\end{equation}
Therefore,
\begin{equation}
\phi_{R_j}^1[\mathcal E_0]\ge \phi^1_{R_j}[\mathcal E_j] \cdot\prod_{1\le i\le j}  \phi_{R_j}^1[\mathcal E_{i-1}\:|\:\mathcal E_i] \ge  e^{-4C}.\label{eq:16}
\end{equation}
Letting $j$ tend to infinity, we obtain that
\begin{equation}
  \label{eq:17}
  \phi_{S_n}^1[\mathcal V_{[4n,8n]\times[0,n]}]\ge \phi_{S_n}^1[\mathcal E_0]\ge e^{-4C}.
\end{equation}
Applying the inequality above to $\lambda n$ and then Theorem~\ref{thm:RSW} -- more precisely \eqref{eq:8} if one wants the bound on $c_3$ -- to translate this estimate on the probability of $[4\lambda n,8\lambda n]\times[0,\lambda n]$ being crossed vertically into the probability that the rectangle $[0,\rho n]\times[0,n]$ is crossed horizontally concludes the proof.
\end{proof}

\section{Proof of Theorem~\ref{thm:main} and Corollary~\ref{cor:z}}\label{sec:3}

The proof of Theorem~\ref{thm:main} will be based on a renormalization involving the following {\em strip densities}:
 \begin{align}
    \label{eq:18}
    p_n&:=\limsup_{\alpha\to\infty} \left(\phi_{[0,\alpha n]\times[-n,2n]}^0[\mathcal H_{[0,\alpha n]\times[0,n]}]\right)^{1/\alpha},\\
    q_n&:=\limsup_{\alpha\to\infty} \left(\phi_{[0,\alpha n]\times[-n,2n]}^1[\mathcal V_{[0,\alpha n]\times[0,n]}^c]\right)^{1/\alpha},
\end{align}
where $\calV^c_R$ denotes the complement of the event $\calV_R$. The quantity $p_n$ provides information on the linear cost of a long open path in the strip. The quantity $q_n$ is its dual analogue. Even though we will not be using those facts, let us note that one may prove that the limsup is in fact a true limit. Also, it follows from~\mon~that for every $n$ and $\lambda\ge1$,  $p_{\lambda n}\ge p_n^\lambda$ and $q_{\lambda n}\ge q_n^\lambda$.

The proof of the theorem will be divided into four parts. In the next section, we explain how to relate $p_n$ to $q_n$. In Section~\ref{sec:pushing}, we derive a lemma, called the pushing lemma, which will be fundamental in the reminder of the proof. Section~\ref{sec:renor} contains the proof of a recursive inequality between $p_{3n}$ and $p_n$. This inequality implies that either $p_n$ does not decay at all, or it decays exponentially fast. The last section wraps up the proof of Theorem~\ref{thm:main}. 

\begin{figure}[htbp]\begin{center}
\includegraphics[width=0.45\textwidth]{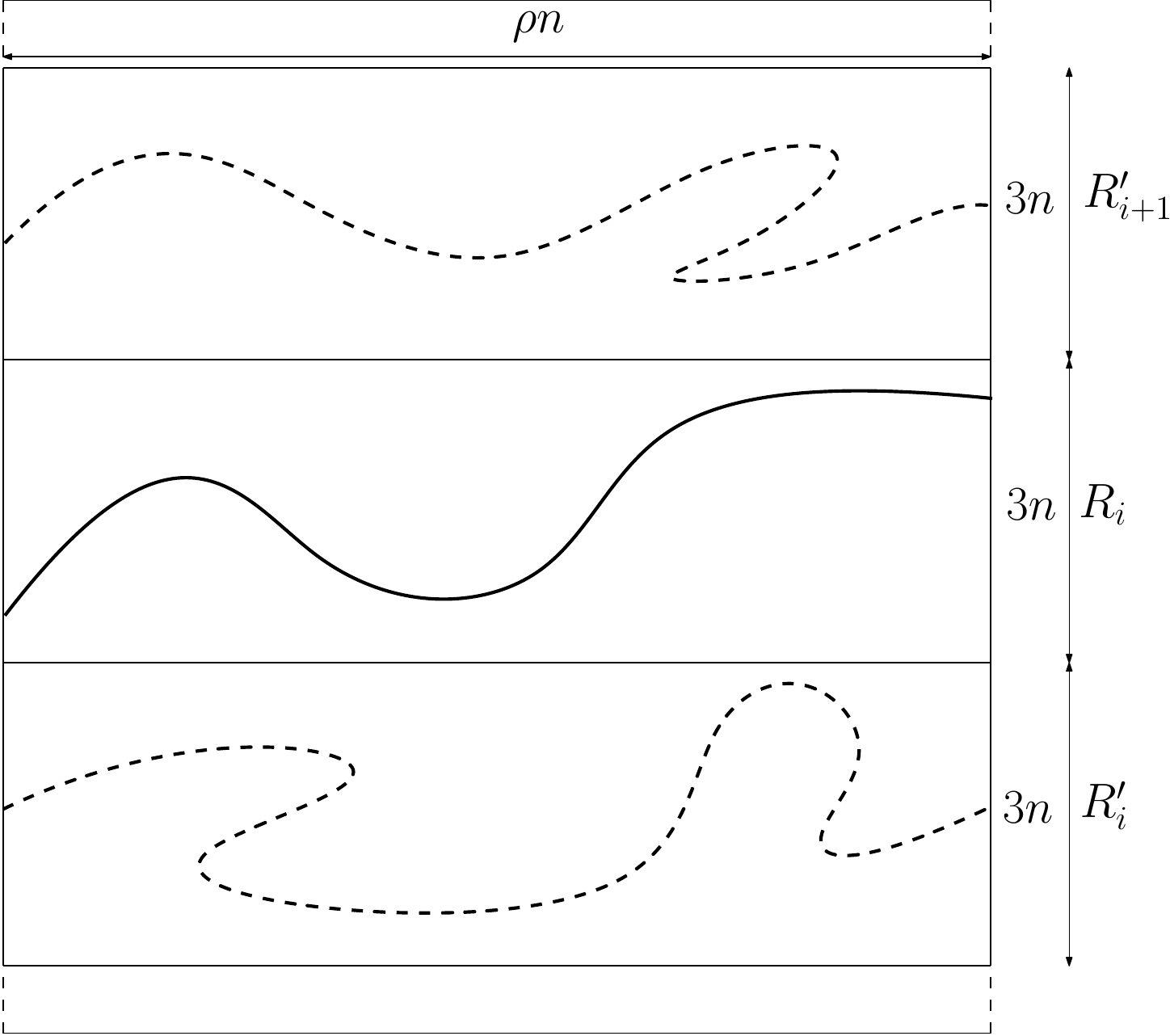}
\caption{\label{fig:lem_2}The rectangles $R_i$, $R'_i$ and $R'_{i+1}$. Also, we depicted the events involved in $\mathcal E$ and $\mathcal F$, namely the existence of a horizontal open crossing in each $R_i$, and the non-existence of vertical crossings in each $R_{i}'$ (here, in $R_i'$ and $R_{i+1}'$), which we depicted by their dual picture, meaning a dash path referring to the existence of a dual open path.}\end{center}
\end{figure}

\subsection{Relation between $p_n$ and $q_n$}
\label{sec:cross-dens-infin}

Notice that $p_n\simeq q_n$ when the system is self-dual like for instance the square lattice (this is not exactly an equality because the dual graph is slightly translated compared to the primal graph). The next lemma shows that crossing densities $(p_n)$ and $(q_n)$ have the same behavior at criticality even for more general systems not enjoying self-duality.

\begin{lemma}\label{lem:1}
  Assume \textbf{non(SubCrit)} and \textbf{non(SupCrit)}. Then, there exists a constant $C>0$ such that for every integer $\lambda\ge 2$ and every $n\in3 \mathbb N$,
\begin{equation}
  \label{eq:20}
   p_{3n}\ge \frac 1{\lambda^C} \ q_{n}^{3+3/\lambda} \quad\text{and}\quad q_{3n}\ge \frac 1{\lambda^C} \ p_n^{3+3/\lambda}.  
\end{equation} 
\end{lemma}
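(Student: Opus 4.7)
By the duality discussion in Section~\ref{sec:duality}, passing to the dual random-cluster model interchanges the roles of $p_n$ and $q_n$ (and swaps {\bf (SubCrit)} with {\bf (SupCrit)}), so the two inequalities in \eqref{eq:20} are equivalent; I focus on the first, $p_{3n} \geq \lambda^{-C} q_n^{3 + 3/\lambda}$.

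The strategy is to lower-bound $\phi^0[\mathcal H_R]$ for $R := [0, 3\alpha n] \times [0, 3n]$ in the strip of height $9n$ with free boundary conditions (with $\alpha$ large). I will cover $R$ by a sequence of ``blocks'' of horizontal length $\sim 3n(1+1/\lambda)$, combined via the FKG inequality \eqref{eq:FKG}; the exponent $3+3/\lambda$ will emerge as the total $q_n$-rate contribution per block. Within each block, the geometry interleaves ``main'' rectangles $R_i$ with narrower ``buffer'' rectangles $R'_i$ at the scale $n$ (Figure~\ref{fig:lem_2}); the buffers have width $\sim n/\lambda$ relative to the main widths $\sim n$, and they are positioned so that the deterministic containment
\[
\mathcal E \cap \mathcal F \subset \mathcal H_R, \qquad \mathcal E := \bigcap_i \mathcal H_{R_i}, \quad \mathcal F := \bigcap_i \mathcal V^c_{R'_i},
\]
holds: the dual horizontal crossings realised by $\mathcal F$ block vertical escapes between successive $R_i$, so that the primal crossings comprising $\mathcal E$ chain together into a horizontal crossing of $R$.

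I bound $\phi^0[\mathcal E \cap \mathcal F]$ in two stages. For the decreasing event $\mathcal F$, FKG applied to decreasing events gives $\phi^0[\mathcal F] \geq \prod_i \phi^0[\mathcal V^c_{R'_i}]$, and each factor is bounded below by $q_n^{w_i/n + o(1)}$ via \eqref{eq:CBC} and \eqref{eq:4} to pass from the free-BC measure on the height-$9n$ strip to the wired-BC strip measure $\phi^1_{S_n}$ entering the definition of $q_n$; the total exponent is $\sum_i w_i/n = (3+3/\lambda)\alpha$ by the geometric choice of widths. For the increasing event $\mathcal E$, conditioning on the outermost dual crossings realising $\mathcal F$ and applying the spatial Markov property \eqref{eq:DMP} yields induced boundary conditions on each $R_i$-region that are at worst free along those dual paths, up to the finite-energy correction \eqref{eq:FI} between ``mix'' and ``$*$-mix'' configurations (contributing at most a factor $q^{O(1)}$ per buffer, uniformly bounded by $\lambda^{C\alpha}$). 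Corollary~\ref{cor:1} combined with Theorem~\ref{thm:RSW}, valid under {\bf non(SubCrit)}, then shows that each conditional probability $\phi^0[\mathcal H_{R_i}\,|\,\mathcal F]$ is at least some $c(\lambda) > 0$, so their product contributes a factor $\geq \lambda^{-C\alpha}$. Taking the $\alpha$-th root and letting $\alpha \to \infty$ delivers $p_{3n} \geq \lambda^{-C} q_n^{3 + 3/\lambda}$.

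The main obstacle is the interplay between the decreasing events composing $\mathcal F$ and the increasing events composing $\mathcal E$: FKG alone does not control their intersection. The resolution lies in the careful geometric choice of the $R_i$ and $R'_i$ so that, conditional on the outermost dual crossings realising $\mathcal F$, the $R_i$-regions become effectively decoupled via the spatial Markov property, with essentially free induced boundary conditions. The assumption {\bf non(SubCrit)} enters critically at this point through Corollary~\ref{cor:1}, which supplies uniform crossing estimates under wired boundary conditions that prevent the conditional probabilities of the $\mathcal H_{R_i}$ from degrading with $\alpha$.
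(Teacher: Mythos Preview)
Your proposal has a fundamental gap in the treatment of the event $\mathcal E$. You claim that Corollary~\ref{cor:1} yields a uniform lower bound on $\phi^0[\mathcal H_{R_i}\mid \mathcal F]$, but Corollary~\ref{cor:1} only provides crossing bounds under \emph{wired} boundary conditions (it asserts $\phi^1_{S_{\lambda n}}[\mathcal H]\ge c_3$ under {\bf non(SubCrit)}). In your setup the ambient measure is $\phi^0$, and conditioning on the decreasing event $\mathcal F$ only makes the induced boundary conditions on each $R_i$ \emph{more} free; by \eqref{eq:FKG} you in fact have $\phi^0[\mathcal H_{R_i}\mid\mathcal F]\le \phi^0[\mathcal H_{R_i}]$, and there is no a~priori lower bound on the right-hand side --- that is precisely the quantity $p_{3n}$ you are trying to control. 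The argument is circular.

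There is also a geometric problem: the containment $\mathcal E\cap\mathcal F\subset\mathcal H_R$ does not hold for the configuration you describe. If the buffers $R'_i$ sit horizontally between the main rectangles $R_i$, then $\mathcal V^c_{R'_i}$ is equivalent to a dual horizontal crossing of $R'_i$, which \emph{blocks} primal paths and prevents the $\mathcal H_{R_i}$ from concatenating. Chaining horizontal crossings requires primal vertical crossings in the overlaps, i.e.~$\mathcal V_{R'_i}$, not its complement.

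The paper's proof avoids both issues by reversing the roles of upper and lower bounds. It works in a tall rectangle $R$ with \emph{wired} boundary conditions and stacks $\lambda$ copies of a rectangle of height $3n$ and width $\alpha n$ vertically, separated by height-$3n$ buffers (so the geometry of Fig.~\ref{fig:lem_2} is a vertical stack, not a horizontal chain). Under $\phi^1_R$, Corollary~\ref{cor:1} legitimately lower-bounds $\phi^1_R[\mathcal E]\ge\lambda^{-C\lambda\alpha}$; the buffers then contribute $\phi^1_R[\mathcal F\mid\mathcal E]\ge \phi^1_{[0,\alpha n]\times[-n,2n]}[\mathcal V^c]^{\lambda+1}$ by \eqref{eq:DMP} and \eqref{eq:4}. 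The decisive step is an \emph{upper} bound: after closing the left and right sides via \eqref{eq:FI} (at cost $q^{-O(\lambda n)}$, which vanishes after taking the $1/\alpha$ root), each layer is isolated by the dual crossings in $\mathcal F$ and sees effectively free boundary conditions, so $\phi^1_R[\mathcal E\cap\mathcal F\cap\mathcal G]\le \phi^0_{[0,\alpha n]\times[-3n,6n]}[\mathcal H_{[0,\alpha n]\times[0,3n]}]^\lambda$. Sandwiching the two bounds and letting $\alpha\to\infty$ gives $p_{3n}^{1/3}\ge\lambda^{-C}q_n^{1+1/\lambda}$. The point is that $p_{3n}$ enters as an \emph{upper} bound on a probability that has been lower-bounded by other means; it is never bounded from below directly.
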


The proof consists of four steps. First, we estimate the probability in a strip with wired boundary conditions of the event $\mathcal E$ that some rectangles $R_i$ of height $3n$ and width $\alpha n$, vertically spaces by rectangles $R'_i$ of height $3n$ (see Fig.~\ref{fig:lem_2}),  are horizontally crossed. To do this, we use Corollary~\ref{cor:1} and the fact that boundary conditions are wired. Second, consider the event $\mathcal F$ that none of the rectangles $R'_i$ in between the previous rectangles $R_i$ are vertically crossed. In order to estimate the probability of $\mathcal F$ conditioned on $\mathcal E$, one uses the probability that a rectangle of height $n$ is not crossed vertically when there are wired boundary conditions at a distance $n$ of the rectangle. This type of probability is involved in the definition of $q_{n}$. Third, using \eqref{eq:FI} to impose free boundary conditions on the left and right of the rectangles $R_i$ and $R'_i$ -- this event is denoted by $\mathcal G$ -- paying an exponential cost on the probability which involves $n$ but not $\alpha$. Finally, one estimates the probability of the intersection $\mathcal E\cap\mathcal F\cap\mathcal G$ using that conditionally on $\mathcal F\cap\mathcal G$, the boundary conditions in each rectangle $R_i$ are dominated by the boundary conditions induced by free boundary conditions at a distance $3n$ by \eqref{eq:DMP} and \mon. As a consequence, the probability of a horizontal crossing involved in the definition of $q_n$ can be bounded in terms of crossing probabilities involved in the definition of $p_{3n}$. Overall, letting $\alpha$ go to infinity at fixed $n$ implies an inequality between $q_n$ and $p_{3n}$. The other inequality can be obtained by duality. 

\begin{proof}
  Fix $\lambda\in\bbN$ and $n\in 3\mathbb N$. We prove the first inequality of \eqref{eq:20} only since the second inequality follows from the same reasoning by duality.
  Let $\alpha\gg1$ be a large number such that $\alpha n$ is an integer (the reader should keep in mind that $\alpha$ will tend to infinity at the end of the proof). 
For every $0\le i\le\lambda$, define the rectangles (see Fig.~\ref{fig:lem_2})
 \begin{align*}
 R&:=[0,\alpha n]\times[0,6\lambda n+3n],\\
 R_i&:=[0,\alpha n]\times[6in+3n,6in+6n],\\
  R'_i&:=[0,\alpha n]\times[6in,6in+3n]. 
 \end{align*}
 Let $\calE$ be the event that each $R_i$ with $0\le i\le \lambda-1$ is crossed horizontally. Using Corollary~\ref{cor:1} and~\mon~in the last inequality, we find that 
  \begin{equation}
    \label{eq:21}
       \phi_{R}^1[\mathcal E]\stackrel{\eqref{eq:FKG}}\ge \prod_{0\le i\le \lambda-1}\phi_R^1[\calH_{R_i}]\ge \Big(\frac 1 {\lambda^C} \Big)^{\lambda\alpha}.
  \end{equation}
  Let $\calF $ be the event that none of the rectangles $R'_i$ with $0\le i\le \lambda$ is crossed vertically. 
Since the event $\mathcal E$ depends only on edges outside of the union of the $R'_i$, \eqref{eq:DMP}, \eqref{eq:4} and the inclusion of events give that
  \begin{equation}
    \label{eq:22}
    \phi_R^1[\mathcal F |\mathcal E]\ge \prod_{0\le i\le \lambda}\phi_{R'_i}^1[\calV^c_{R'_i}]\ge\phi^1_{[0,\alpha n]\times[-n,2n]}[\mathcal V_{[0,\alpha n]\times[0,n]}^c]^{\,\lambda+1}.
  \end{equation}
By \eqref{eq:FI}, we have that  
\begin{equation}
  \label{eq:23}
  \phi_R^1[\mathcal E\cap \mathcal F\cap \calG]\ge q^{-(12\lambda+6) n} \phi_R^1[\mathcal E\cap \mathcal F].
\end{equation}
  Now,  we provide an upper bound on $\phi_R^1[\mathcal E\cap \mathcal F\cap\mathcal G]$. 
  Using \eqref{eq:DMP} and \eqref{eq:4} several times implies \begin{equation}
    \label{eq:24}
  \phi_R^1[\mathcal E\cap \mathcal F\cap\mathcal G] \le \phi_R^1[\mathcal E| \mathcal F\cap\mathcal G]\le \phi_{[0,\alpha n]\times [-3n,6n]}^0[\mathcal H_{[0,\alpha n]\times[0,3n]}]^{\,\lambda}.
  \end{equation}
Combining the four previous displayed equations gives
  \begin{equation}
    \label{eq:25}
\phi_{[0,\alpha n]\times [-3n,6n]}^0[\mathcal H_{[0,\alpha n]\times[0,3n]}]^{\lambda}\ge \frac {q^{-(12\lambda+6) n}}{\lambda^{C\lambda\alpha}} \ \phi^1_{[0,\alpha n]\times[-n,2n]}[\mathcal V_{[0,\alpha n]\times[0,n]}^c] ^{\lambda+1}.
  \end{equation}
Taking both sides to the power $1/(\lambda\alpha)$ and then letting $\alpha$ tend to infinity leads to
  \begin{equation}
    \label{eq:26}
    p_{3n}^{1/3}\ge \frac 1{\lambda^C} \ q_{n}^{1+1/\lambda}. 
  \end{equation}
\end{proof}

\subsection{The pushing lemma}\label{sec:pushing}

We will use the previous result through the following lemma that we pompously named the pushing lemma since it will enable us to ``push'' free boundary conditions later on in the next section.
\begin{lemma}[Pushing Lemma]\label{lem:2} There exists $c>0$ such that for every $n\ge1$, we have either
\begin{equation}
  \label{eq:27}
 \forall \alpha\ge1, \quad \phi^{1/0}_{\overline R}[\calH_R]\ge c^\alpha, \tag{\text{PushPrimal}}  
  \end{equation}
or
\begin{equation}
  \forall \alpha\ge1, \quad  \phi^{0/1}_{\overline R}[\calV_R^c]\ge c^\alpha,\tag{\text{PushDual}}\label{eq:28}
\end{equation}
where $R:=[0,\alpha n]\times[0,n]$, $\overline R:=[0,\alpha n]\times[0,26n]$ \text{ and }1/0 (resp.~0/1) refers to the wired boundary conditions  on the union of the left, top and right sides of $\overline R$ and free elsewhere (resp.~wired on the bottom and free elsewhere).
\end{lemma}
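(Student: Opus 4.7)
I propose to prove the dichotomy by contradiction: suppose that for some small universal $c>0$ (to be chosen later) there is a scale $n$ such that both \eqref{eq:27} and \eqref{eq:28} fail, so that for some $\alpha_1,\alpha_2\ge1$ one has
\begin{equation*}
\phi^{1/0}_{\overline R}[\mathcal H_R]<c^{\alpha_1}\quad\text{and}\quad \phi^{0/1}_{\overline R}[\mathcal V^c_R]<c^{\alpha_2}.
\end{equation*}
From this I will derive a contradiction using the strip densities $p_n,q_n$ from Section~\ref{sec:renor} together with Lemma~\ref{lem:1} and Corollary~\ref{cor:1}. Note that by duality on $\mathbb{G}^*$ (see Section~\ref{sec:duality}), (PushDual) for the primal model is just (PushPrimal) for the dual model, so the two estimates play symmetric roles and it suffices to analyze one in detail.

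\paragraph{Step 1: Transfer to strip densities.} I will show that the failure of (PushPrimal) forces $p_n\le C c$ for a dimensional constant $C$ (and analogously failure of (PushDual) forces $q_n\le Cc$). Since $\mathcal H_R$ is increasing and $1/0$ dominates $0$, \eqref{eq:CBC} gives $\phi^0_{\overline R}[\mathcal H_R]\le \phi^{1/0}_{\overline R}[\mathcal H_R]<c^{\alpha_1}$. Using the $\preceq_0$ ordering of \mon, together with translation invariance of the graph, I relate $\phi^0_{[0,\alpha_1 n]\times[-n,2n]}[\mathcal H_R]$ to $\phi^0_{\overline R}[\mathcal H_R]$ (the height $26n$ of $\overline R$ provides sufficient slack to push free boundary conditions away from the relevant crossing strip). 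Since the strip quantity behind $p_n$ is super-multiplicative in $\alpha$ (by \eqref{eq:FKG} and horizontal translation invariance of $\phi^0_{S_n}$), even a single-$\alpha$ upper bound on it determines $p_n$ up to constants, yielding $p_n\le Cc$. The same argument in the dual, using that (PushDual) failure gives an upper bound on $\phi^{*,1/0}_{\overline R^*}[\mathcal H^*_{R^*}]$ through Section~\ref{sec:duality}, gives $q_n\le Cc$.

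\paragraph{Step 2: Contradiction under non(SubCrit) and non(SupCrit).} Under these assumptions, Lemma~\ref{lem:1} yields $p_{3n}\ge \lambda^{-C}q_n^{3+3/\lambda}$ and $q_{3n}\ge\lambda^{-C}p_n^{3+3/\lambda}$ for every $\lambda\ge2$. I apply Corollary~\ref{cor:1} to obtain an a priori universal lower bound on the strip crossing probability with wired boundary conditions, which via \mon (pushing free boundary conditions in a bigger strip) gives a lower bound on the free-bc strip probability, hence on $p_n$ (or $q_n$) at some nearby scale $3^k n$. Combined with Step~1, iterating Lemma~\ref{lem:1} forces the densities at all larger scales $3^k n$ to be bounded above by roughly $(Cc)^{3^k}$, which for $c$ small enough contradicts the a priori bound. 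This closes the contradiction under non(SubCrit) and non(SupCrit).

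\paragraph{Step 3: The sub- and supercritical cases.} In (SubCrit), the dual is in (SupCrit), so dual horizontal crossings in $R^*$ with dual wired boundary conditions are abundant: in fact $\phi^{*,1}_{\overline R^*}[\mathcal H^*_{R^*}]\ge 1-e^{-cn}$. Transferring this through duality, $\phi^{0/1}_{\overline R}[\mathcal V^c_R]$ is close to $1$, so (PushDual) holds with $c$ close to $1$. The (SupCrit) case is symmetric and yields (PushPrimal).

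\paragraph{Main obstacle.} The delicate step is Step~1: the definition of $p_n$ uses purely free boundary conditions on a strip of height $3n$, whereas $\phi^{1/0}_{\overline R}$ mixes wired (top, left, right) and free (bottom) boundary conditions on a rectangle of height $26n$. One cannot pass freely between these using \eqref{eq:FI}, since the number of boundary vertices that would need to be changed is of order $n$, producing a factor $q^{O(n)}$ that is not uniform in $n$ and would destroy the contradiction in Step~2. The workaround is to rely entirely on \mon (together with translation invariance to shift the target rectangle inside $\overline R$ without changing probabilities), using the generous height $26n$ to accommodate pushing wired conditions in and free conditions out without any per-vertex cost. Getting the constants in Lemma~\ref{lem:1} to close the loop with those coming from Corollary~\ref{cor:1} and Theorem~\ref{thm:RSW} is the main bookkeeping burden.
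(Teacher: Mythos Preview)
Your proposal has two genuine gaps, and the overall strategy is quite different from the paper's.

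\textbf{Gap in Step 1.} The supermultiplicativity argument goes the wrong way. By \eqref{eq:FKG} and horizontal translation invariance, the sequence $\alpha\mapsto \phi^0_{[0,\alpha n]\times[-n,2n]}[\mathcal H_{[0,\alpha n]\times[0,n]}]$ is indeed supermultiplicative, so by Fekete's lemma the $\limsup$ is actually a $\sup$ over $\alpha$. Therefore a single-$\alpha$ \emph{lower} bound would control $p_n$ from below, but a single-$\alpha$ \emph{upper} bound (which is all you extract from the failure of \eqref{eq:27}) says nothing about $p_n$. Your claim ``even a single-$\alpha$ upper bound on it determines $p_n$ up to constants'' is false.

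\textbf{Gap in Step 2.} This step is circular in two ways. First, Corollary~\ref{cor:1} produces lower bounds on crossing probabilities with \emph{wired} boundary conditions; there is no \mon-type move that converts these into lower bounds with \emph{free} boundary conditions (that is precisely the content the Pushing Lemma is meant to supply). Second, Lemma~\ref{lem:1} only gives \emph{lower} bounds $p_{3n}\ge \lambda^{-C}q_n^{3+3/\lambda}$; it cannot propagate smallness of $p_n,q_n$ to smaller values at larger scales. The inequality that does propagate smallness is Lemma~\ref{lem:3}, whose proof \emph{uses} the Pushing Lemma. So you would be assuming what you are trying to prove.

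\textbf{How the paper proceeds.} The paper's proof is direct and does not go through the strip densities at all. It first proves a dichotomy in the infinite Dobrushin strip $S_n$: either $\phi_{S_n}^{0/1}[\mathcal H_{[0,\alpha n]\times[0,n]}]\ge c^\alpha$ for all $\alpha$, or the dual statement holds. The key tool is Theorem~\ref{thm:RSW} applied to $\phi_{S_n}^{0/1}$, together with a short symmetric-domain argument (in the spirit of the RSW proof) that turns a horizontal crossing of a thin sub-rectangle into a connection between two specified segments on the bottom. Once the strip dichotomy is established, the lemma follows by slicing $\overline R$ into roughly $78$ thin horizontal strips and iteratively pushing the free boundary condition from the top of $\overline R$ down to height $n$, one strip at a time, using \mon~at each step. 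No appeal to $p_n$, $q_n$, Lemma~\ref{lem:1} or Corollary~\ref{cor:1} is needed, and the argument is uniform over all regimes.
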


In the following lemma, we show that in the strip $S_n:=\mathbb Z\times[-n,2n]$ with free boundary conditions on top and wired on bottom, we can either create long horizontal open crossings in $S_n$, or that this statement is true for the dual measure. 
\begin{lemma}
There exists a constant  $c>0$ such that for every $n\ge 1$, either
\begin{equation}
  \label{eq:29}
 \forall \alpha\ge1, \quad\phi_{S_n}^{0/1}[\mathcal H_{[0,\alpha n]\times[0,n]}]\ge c^\alpha,  
\end{equation}
or
\begin{equation}
 \label{eq:30}
  \forall \alpha\ge1, \quad\phi_{S_n}^{0/1}[\mathcal V_{[0,\alpha n]\times[0,n]}^c]\ge c^\alpha.
\end{equation}
\end{lemma}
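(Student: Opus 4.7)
The approach is a dichotomy based on the first scale at which vertical crossings of a long thin rectangle become likely in $\phi_{S_n}^{0/1}$. Define
\begin{equation*}
K(n) := \inf\bigl\{ k \ge 1 : \phi_{S_n}^{0/1}[\mathcal V_{[0, kn] \times [0,n]}] \ge \tfrac{1}{2} \bigr\} \;\in\; \{1,2,\dots\}\cup\{\infty\}.
\end{equation*}
I will show that $K(n) = \infty$ forces \eqref{eq:30}, while $K(n) < \infty$ forces \eqref{eq:29}, in both cases with a constant depending only on the model parameters.

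\textbf{Case $K(n) = \infty$.} Here $\phi_{S_n}^{0/1}[\mathcal V^c_{[0, kn] \times [0,n]}] > 1/2$ for every integer $k \ge 1$. Since enlarging a rectangle cannot destroy any vertical crossing, the map $\alpha \mapsto \phi_{S_n}^{0/1}[\mathcal V^c_{[0,\alpha n]\times[0,n]}]$ is non-increasing, so for every real $\alpha \ge 1$,
\begin{equation*}
\phi_{S_n}^{0/1}\bigl[\mathcal V^c_{[0,\alpha n]\times[0,n]}\bigr] \;\ge\; \phi_{S_n}^{0/1}\bigl[\mathcal V^c_{[0,\lceil\alpha\rceil n]\times[0,n]}\bigr] \;>\; \tfrac{1}{2} \;\ge\; (1/2)^\alpha.
\end{equation*}
This yields \eqref{eq:30} with $c = 1/2$.

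\textbf{Case $K(n) = K < \infty$.} The monotonicity of vertical crossings in width and Theorem~\ref{thm:RSW} applied to $\phi_{S_n}^{0/1}$ with $\rho = 2K$ give
\begin{equation*}
\phi_{S_n}^{0/1}[\mathcal H_{[0,2Kn] \times [0,n]}] \;\ge\; f_{2K}\bigl(\phi_{S_n}^{0/1}[\mathcal V_{[0,2Kn] \times [0,n]}]\bigr) \;\ge\; f_{2K}(1/2) \;=:\; c_1(K).
\end{equation*}
To combine such crossings, I introduce, for each integer $m \ge 1$, the overlapping rectangles $R_j := [(j-1)Kn,(j+1)Kn]\times[0,n]$ for $j = 1,\dots,m$, and the overlap rectangles $Q_j := R_j\cap R_{j+1} = [jKn,(j+1)Kn]\times[0,n]$ for $j = 1,\dots,m-1$. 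By horizontal translation invariance (Section~\ref{sec:strip}), $\phi_{S_n}^{0/1}[\mathcal H_{R_j}] \ge c_1(K)$ and $\phi_{S_n}^{0/1}[\mathcal V_{Q_j}] \ge 1/2$. The essential topological observation is that whenever all the increasing events $\mathcal H_{R_j}$ and $\mathcal V_{Q_j}$ occur, the restriction of the $R_j$-crossing to the overlap $Q_j$ is a left-right crossing of $Q_j$ (the crossing enters and exits $R_j$ on its two vertical sides), which must intersect the vertical crossing supplied by $\mathcal V_{Q_j}$; chaining these intersections produces a horizontal crossing of $[0,(m+1)Kn]\times[0,n]$. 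Applying \eqref{eq:FKG} then gives
\begin{equation*}
\phi_{S_n}^{0/1}\bigl[\mathcal H_{[0,(m+1)Kn]\times[0,n]}\bigr] \;\ge\; c_1(K)^{m} \,(1/2)^{m-1}.
\end{equation*}
A rounding of $\alpha \ge 1$ to a multiple of $K$, combined with the monotonicity that horizontal crossings of a larger rectangle are contained in horizontal crossings of the smaller, upgrades this bound into $\phi_{S_n}^{0/1}[\mathcal H_{[0,\alpha n]\times[0,n]}] \ge c(K)^\alpha$ for every $\alpha \ge 1$, with $c(K)$ of the form $(c_1(K)/2)^{1/K}$ up to an absorbed absolute prefactor.

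\textbf{Conclusion and main obstacle.} It remains to verify that $c(K)$ is bounded below uniformly in $K$, so that $c := \min\bigl(1/2,\inf_{K\ge 1}c(K)\bigr)$ is strictly positive. Each $c(K)$ is positive, and the remark following Theorem~\ref{thm:RSW} implies $f_\rho(1/2)\to 1$ as $\rho\to\infty$, so $c_1(K) \to 1$ and hence $c(K) \to 1$ as $K\to\infty$; the infimum over $K\ge 1$ is therefore strictly positive. The genuinely delicate part of the proof is this uniformity of the chaining constant in the scale $K = K(n)$, which can a priori vary with $n$; once this is controlled, the remaining ingredients (FKG, translation invariance of $\phi_{S_n}^{0/1}$, the standard planar fact that any horizontal and any vertical crossing of a common rectangle share a vertex, and the rounding to multiples of $K$) follow routinely from the tools of Section~\ref{sec:0} and Theorem~\ref{thm:RSW}.
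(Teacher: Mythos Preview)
Your argument has a genuine gap in the uniformity claim at the end. You assert that the remark after Theorem~\ref{thm:RSW} yields $f_\rho(1/2)\to 1$ as $\rho\to\infty$; this is a misreading. That remark only describes the asymptotic $f_\rho(x)\sim 1-c\rho^3(1-x)^{c\rho}$ for $x$ close to $1$, not at $x=1/2$. In fact $f_\rho(1/2)\to 0$ is forced: take any subcritical $p$ and the full-plane measure $\phi=\phi_{\mathbb G}^0$, fix $n$, and let $\rho\to\infty$; then $\phi[\mathcal V_{[0,\rho n]\times[0,n]}]\to 1$ (many columns, each with positive probability of being crossed) while $\phi[\mathcal H_{[0,\rho n]\times[0,n]}]\to 0$ exponentially, so $f_\rho(1/2)\le f_\rho(\phi[\mathcal V])\le\phi[\mathcal H]\to 0$. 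More quantitatively, the explicit lower bound on $f_\rho$ coming from \eqref{eq:8} is of order $(c/\rho^3)^{c\rho}$ at $x=1/2$, hence $c_1(K)=f_{2K}(1/2)$ is only guaranteed to exceed $K^{-c'K}$, and your $c(K)=(c_1(K)/2)^{1/K}$ is then only bounded below by $K^{-c'}$, which tends to $0$. So $\inf_{K\ge1}c(K)=0$ and no uniform constant emerges from your construction.

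The structural issue is that the aspect ratio $2K(n)$ at which you invoke RSW varies with $n$, so the RSW constant is not uniform. The paper avoids this by working at a \emph{fixed} aspect ratio: it slices $[0,2n]\times[0,n]$ into three sub-rectangles $R_0,R_1,R_2$ of height $n/3$ and aspect ratio $6$, and runs a trichotomy: if some $\phi_{S_n}^{0/1}[\mathcal V_{R_i}]\ge\tfrac16$ then RSW with $\rho=6$ gives \eqref{eq:29}; if some $\phi_{S_n}^{0/1}[\mathcal H_{R_i}^c]\ge\tfrac16$ then RSW for the dual gives \eqref{eq:30}; otherwise all $\mathcal H_{R_i}$ are likely and all $\mathcal V_{R_i}$ are unlikely, and a bridging argument between two segments of fixed size (using symmetry and a comparison of mix and $*$-mix boundary conditions) connects $I_1$ to $I_4$ with a probability depending only on $q$. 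Every constant is therefore independent of $n$. Your dichotomy on $K(n)$ does not capture this, and I do not see a cheap repair that makes the chaining constant uniform.
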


\begin{figure}\begin{center}
\includegraphics[width=0.65\textwidth]{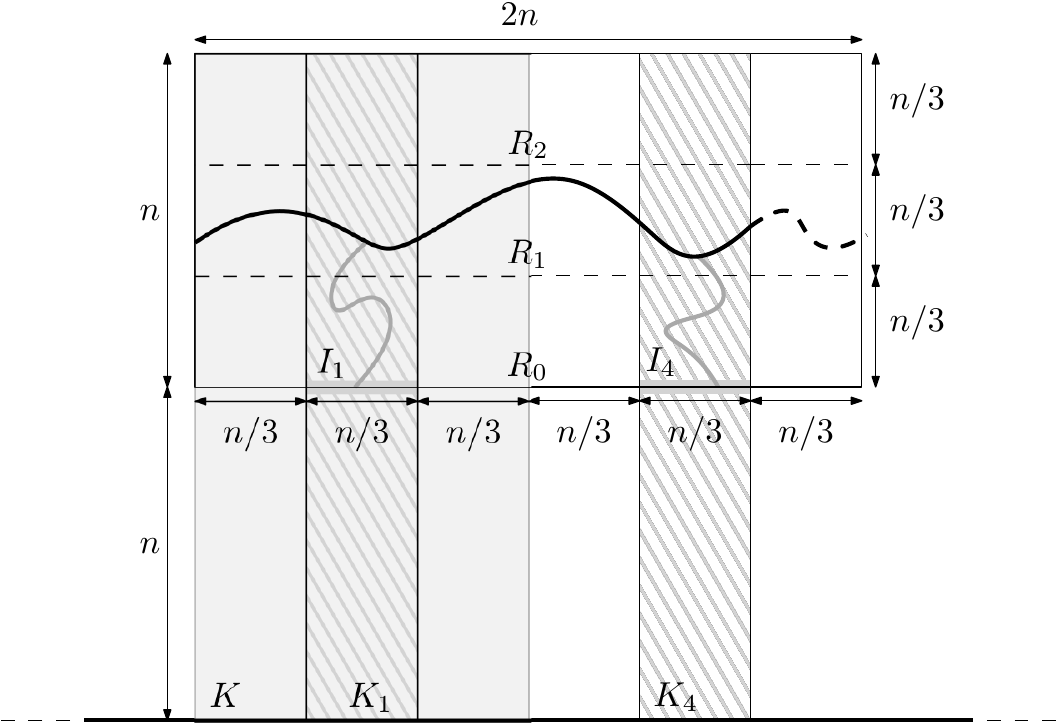}
\caption{\label{fig:lem-3}The rectangles $R_i$ and  $K_i$, as well as the horizontal segments $I_1$ and $I_4$. The rectangle $K=K_0\cup K_1\cup K_2$ is denoted in light gray. The path $\Gamma$ and wired boundary conditions  at the bottom induce boundary conditions that are dominating the mix$'$ boundary conditions on the parts of $K_1$ and $K_4$ that is below $\Gamma$.}\end{center}
\end{figure}
\begin{proof}
  Consider the three rectangles 
  $$R_i:=[0,2n]\times[\tfrac i3n,\tfrac{i+1}3n]\quad\text{for } i=0,1,2.$$ If $\phi_{S_n}^{0/1}[\mathcal V_{R_i}]\ge \frac16$  (resp.~$\phi_{S_n}^{0/1}[\mathcal H_{R_i}^c]\ge \frac16$) for some $i$, then  Theorem~\ref{thm:RSW} applied to $\phi_{S_n}^{0/1}$ (resp.~its dual)  concludes that \eqref{eq:29}  (resp.~\eqref{eq:30}) holds. Therefore, we may assume that for every $i$,  
  \begin{equation}
\phi_{S_n}^{0/1}[\mathcal V_{R_i}^c]\ge \tfrac56 \quad\text{ and }\quad\phi_{S_n}^{0/1}[\mathcal H_{R_i}]\ge \tfrac 56\label{eq:31}.
\end{equation}
Using the union bound, together with \eqref{eq:DMP} and~\mon~in the second inequality, we find that 
  \begin{equation}
    \label{eq:32}
    \tfrac12\le  \phi_{S_n}^{0/1}[\mathcal V_{R_0}^c\cap \mathcal H_{R_1}\cap \mathcal V_{R_2}^c]\le \phi_{R}^{*-\rm mix}[\mathcal H_{R_1}],  
  \end{equation}
  where $R:=[0,2n]\times[0,n]$ and the $*-\rm mix$ boundary conditions are wired on the union of the left and right sides, and free elsewhere. 
  
For $i=0,\dots,5$, introduce the horizontal segments and the rectangles (see Fig.~\ref{fig:lem-3})\begin{align*}
I_i&:=[\tfrac i3n,\tfrac{i+1}3n]\times\{0\},\\
 K_i&:=[\tfrac i3n,\tfrac{i+1}3n]\times[-n,n].\end{align*}  We claim that
\begin{equation}
  \label{eq:33}
  \phi_{S_n}^{0/1}[I_1\longleftrightarrow I_4\text{ in $R$}\: |\: \mathcal H_{R_1}]\ge\tfrac1{64}.
\end{equation}
Indeed, condition on  the top-most horizontal crossing $\Gamma$  of $R_1$. Properties \eqref{eq:DMP} and \eqref{eq:4} give that the boundary conditions induced by $\Gamma$ on the part of $K_1$ below $\Gamma$ are dominating the boundary conditions induced by the mix boundary conditions equal to wired on bottom, wired on top parts of $K:=K_0\cup K_1\cup K_2$, and free everywhere else. Using the rotational symmetry (to compare $K$ to $R$) and comparing mix and $*$-mix boundary conditions, we deduce that   
\begin{equation}
  \label{eq:34}
  \phi_{S_n}^{0/1}[\Gamma \lr[] I_1\text{ in $K_1$}|\Gamma]\ge \tfrac1q\,\phi_{R}^{\rm mix}[\mathcal H_{R_1}] \stackrel{\eqref{eq:32}}\ge\tfrac1{8q}.
\end{equation}
Since the same holds true for $I_4$, \eqref{eq:FKG} implies that
\begin{equation}
  \label{eq:35}
  \phi_{S_n}^{0/1}[I_1 \lr[] I_4\text{ in $R$}|\Gamma]\ge\tfrac{1}{64q^2},  
\end{equation}
which gives \eqref{eq:33} by integrating on $\Gamma$. 

In conclusion, \eqref{eq:31} and \eqref{eq:33} give that
\begin{equation}
  \label{eq:36}
   \phi_{S_n}^{0/1}[I_1\lr[]I_4\text{ in $R$}]\ge  \phi_{S_n}^{0/1}[I_1\lr[]I_4\text{ in $R$} |\mathcal H_{R_1}]\phi_{S_n}^{0/1} [\mathcal H_{R_1}]\ge \tfrac{5}{384q^2},
\end{equation}
which can be combined with \eqref{eq:FKG} as in the proof of \eqref{eq:8} to get \eqref{eq:29}. \end{proof}

\begin{proof}[Proof of Lemma~\ref{lem:2}]
  Without loss of generality, we may assume $n\in 3\mathbb N$: the cases $n=1,2$ can be treated using finite energy, and the other cases can be obtained using \eqref{eq:4}.
We assume that \eqref{eq:30} holds for $n/3$ and prove that \eqref{eq:28} holds for $n$. If \eqref{eq:29} holds instead of \eqref{eq:30}, the same argument proves \eqref{eq:27}.  

For $i=1,\ldots,78$, consider the rectangles 
$$R_i:=[0,\alpha n]\times [\tfrac i3n,\tfrac{i+1}3n].$$ 
By \eqref{eq:4} and \eqref{eq:29} applied to $n/3$, we get
\begin{equation*}
    \phi^{0/1}_{\overline R}[\mathcal V_{R_{78}}^c]\ge c^\alpha.
\end{equation*}
Then, by conditioning on the top-most dual path in $R_{i+1}$ and using \eqref{eq:4}, we find that for every $1\le i<78$, 
\begin{equation*}
  \phi^{0/1}_{\overline R}[\mathcal V_{R_i}^c\: |\: \mathcal V_{R_{i+1}}^c ]\ge c^\alpha,
\end{equation*}
The two equations above imply that  $\phi^{0/1}_{\overline R}[\mathcal V_{R_1}^c]\ge c^{78\alpha}$ and the proof is complete.
\end{proof}

\subsection{The renormalization inequality}\label{sec:renor}
The main motivation for introducing crossing densities is the renormalization inequality in the following lemma.

\begin{lemma}\label{lem:3}
If \textbf{non(SubCrit)} and  \textbf{non(SupCrit)}, then there exists a constant $C>0$ such that for every integer $\lambda\ge2$ and every $n\in 3\mathbb N$, we have 
  \begin{equation}
    \label{eq:37}
    p_{3n}\le \lambda^C\, p_n^{3-9/\lambda}\quad and \quad q_{3n}\le \lambda^C\, q_n^{3-9/\lambda}.
  \end{equation}
\end{lemma}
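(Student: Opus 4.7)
By the duality observations in Section~\ref{sec:duality}, the inequality $q_{3n} \leq \lambda^C q_n^{3-9/\lambda}$ follows from the first by applying the same argument to the dual model, so it suffices to prove $p_{3n} \leq \lambda^C p_n^{3-9/\lambda}$. The proof will be an upper-bound counterpart to Lemma~\ref{lem:1}: we construct an event in a tall rectangle whose probability can be sandwiched between a lower bound in terms of $p_{3n}$-type crossings and an upper bound in terms of $p_n$-type crossings.

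Fix $\alpha\gg 1$ and consider a tall rectangle $R=[0,\alpha n]\times[0,H]$ with \emph{free} boundary conditions (contrasting with the wired conditions used in Lemma~\ref{lem:1}), with $H$ of order $\lambda n$. Inside $R$, interlace a family of ``target'' slabs of height $3n$ (corresponding to $p_{3n}$-type crossings) with ``buffer'' slabs adapted so that their non-crossings yield $p_n$-type quantities, in direct analogy with the geometry of Lemma~\ref{lem:1}. Define the event $\mathcal{E}$ that each target slab admits a horizontal open crossing, the event $\mathcal{F}$ that each buffer slab is \emph{not} crossed vertically (equivalently, admits a dual horizontal crossing), and an auxiliary event $\mathcal{G}$ converting the sides of $R$ to suitable boundary conditions via~\eqref{eq:FI}.

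The probability of $\mathcal{E}\cap\mathcal{F}\cap\mathcal{G}$ is then bounded in two ways. For the lower bound, conditioning on $\mathcal{F}\cap\mathcal{G}$ and invoking~\eqref{eq:DMP} and~\mon\ shows that the induced boundary conditions on each target slab are dominated by free boundary conditions at distance $3n$; combined with~\eqref{eq:FKG}, this gives
\[
P(\mathcal{E}\cap\mathcal{F}\cap\mathcal{G}) \ge \mathrm{cost}(\mathcal{F}, \mathcal{G}) \cdot P_{3n}(\alpha)^\lambda.
\]
For the upper bound, the buffer non-crossings in $\mathcal{F}$ are converted into $p_n$-type crossing probabilities: one applies the pushing lemma (Lemma~\ref{lem:2}) to propagate free boundary conditions into each buffer, and then invokes the definition of $p_n$ in the resulting favourable geometry, producing a bound of the form $\lambda^{C\alpha\lambda}\cdot P_n(\alpha)^{m}$ with an explicit $m$ depending on the geometry. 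Equating the two estimates, taking the $1/(\lambda\alpha)$ power and letting $\alpha\to\infty$, one arrives at $p_{3n}\leq\lambda^{C}p_n^{3-9/\lambda}$, the exponent $3-9/\lambda$ emerging from the careful accounting of the ratio between target and buffer contributions and the scaling factor~$3$ between heights $3n$ and $n$.

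The principal technical obstacle is the boundary-condition bookkeeping in the upper bound. In Lemma~\ref{lem:1}, the ambient wired conditions make the buffer non-crossings directly translatable into the $q_n^{(\lambda+1)\alpha}$-estimate at essentially no cost. In the present free setting, however, one needs the pushing lemma to transport free boundary conditions from the ambient domain into each buffer before a $p_n$-type estimate applies, and this transport is asymmetric between the two alternatives \eqref{eq:27} and \eqref{eq:28} of Lemma~\ref{lem:2}. A case analysis based on which alternative holds at scale~$n$, analogous to the case distinctions appearing in the proofs of Lemmas~\ref{lem:RSW} and~\ref{lem:2}, may therefore be required to close the argument.
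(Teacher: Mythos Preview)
Your plan has a structural gap that prevents it from closing. The crux is your choice of \emph{free} ambient boundary conditions on $R$ together with the attempt to obtain a lower bound of the form $P(\mathcal{E}\cap\mathcal{F}\cap\mathcal{G})\ge \mathrm{cost}\cdot P_{3n}(\alpha)^\lambda$. As you yourself write, conditioning on $\mathcal{F}\cap\mathcal{G}$ makes the induced boundary conditions on each target slab \emph{dominated} by free boundary conditions at distance $3n$; by \mon\ this yields $P(\mathcal{E}\mid\mathcal{F}\cap\mathcal{G})\le P_{3n}(\alpha)^\lambda$, i.e.\ an \emph{upper} bound, not a lower bound. More fundamentally, $p_{3n}$ is defined as a crossing density under the most unfavourable (free) boundary conditions, so in a free-ambient setup there is nothing below it to compare to: no event built out of primal crossings in such a domain can produce a lower bound of order $p_{3n}^{\lambda\alpha}$. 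Your ``upper bound'' side is also off: the pushing lemma does not convert dual buffer non-crossings into $p_n$-type primal quantities; its role is to propagate \emph{free} boundary conditions inward once some dual separation is already present.

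The paper avoids this obstruction by \emph{not} seeking a direct $p$-to-$p$ inequality. It keeps the \emph{wired} ambient setup of Lemma~\ref{lem:1} verbatim (same $R$, same $\mathcal{F}$, same $\mathcal{G}$), but replaces the height-$3n$ target crossings by height-$n/9$ ones (event $\tilde{\mathcal{E}}$). The lower bound then reads exactly as in Lemma~\ref{lem:1} and produces a factor $q_n^{\lambda+1}$. The pushing lemma is invoked \emph{after} this step, to create thin dual buffers $\tilde{\mathcal{F}}$ at distance $n/9$ from $\tilde R_i$, at a cost $c^{2\lambda\alpha}$; only then does the upper bound $P(\tilde{\mathcal{E}}\mid\tilde{\mathcal{F}}\cap\mathcal{G})\le p_{n/9}^{9\lambda\alpha}$ become available. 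The outcome is a mixed inequality $p_{n/9}^{9}\ge \lambda^{-C} q_n^{1+1/\lambda}$, and the loop is closed by a second application of Lemma~\ref{lem:1} (converting $p_{n/9}$ back to $q_{n/3}$) to obtain $q_{n/3}^{3}\ge \lambda^{-C'} q_n^{(1+1/\lambda)^2}$, which is the desired $q$-renormalization. This final use of Lemma~\ref{lem:1} is the step your plan is missing; without routing through $q_n$ (or dually through $p_n$ when proving the $p$-inequality via the dual), the sandwich cannot be completed.
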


The proof is very similar to the proof of the second inequality of Lemma~\ref{lem:1}, except that we use the pushing lemma to bring back the boundary conditions induced by the occurrence of the event $\mathcal F$ closer to the rectangles $R_i$. In order to salvage as much notation from the proof of Lemma~\ref{lem:1} as possible, we consider $n\in 9\mathbb N$ and prove the inequality for $n/3$. Also, we modify slightly the definition of $\mathcal E$ by forcing the horizontal crossings to occur in rectangles of height $n/9$ instead of $3n$. These two modifications allow us to replace $p_{3n}^{1/3}$ by $p_{n/9}^9$. Finally, using Lemma~\ref{lem:1} enables us to replace $p_{n/9}^9$ by $q_{n/3}^3$, a fact which concludes the proof.
\begin{proof}Assume $n\in 9\mathbb N$. 
We focus on the second inequality only (the first one follows by applying the same reasoning to the dual). By Lemma~\ref{lem:2}, either \eqref{eq:28} or \eqref{eq:27} holds true. Assume that it is \eqref{eq:28} that holds. We explain at the end of the proof how to modify the proof if it is \eqref{eq:27} that holds. Fix $\lambda,\alpha\ge 1$. We use again the rectangle  $R$ and the events  $\mathcal F$ and $\mathcal G$ defined in the proof of Lemma~\ref{lem:1}. Divide, for every $0\le i\le \lambda-1$, the middle of $R_i$ into three thinner rectangles
 \begin{align*}
  \tilde R^-_i&:=[0,\alpha n]\times[6in+3n+\tfrac{12}9n,6in+3n+\tfrac{13}9n],\\
   \tilde R_i&:=[0,\alpha n]\times[6in+3n+\tfrac{13}9n,6in+3n+\tfrac{14}9n],\\
    \tilde R^+_i&:=[0,\alpha n]\times[6in+3n+\tfrac{14}9n,6in+3n+\tfrac{15}9n]. 
 \end{align*}
Let $\tilde{\mathcal E}$ be the event that every rectangle $\tilde R_i$ is crossed horizontally. The same steps as in the proof of Lemma~\ref{lem:1} lead to
\begin{equation}
  \label{eq:38}
  \phi_{R}^1[\tilde{\mathcal E}\cap \mathcal F\cap \mathcal G]\ge \frac {q^{-(12\lambda+6) n}} {\lambda^{C\lambda\alpha}}\phi^1_{[0,\alpha n]\times[-n,2n]}[\mathcal V_{[0,\alpha n]\times[0,n]}^c]^{\lambda+1}.
\end{equation}
We now use \eqref{eq:28}. Let $\tilde{\mathcal F}$ be the event that none of the rectangles $\tilde R^\pm_i$ are crossed vertically.
By the same reasoning that we have already used several times (conditioning on lowest/highest dual crossings and using \eqref{eq:4}) and the assumption that  \eqref{eq:28} holds for $n/9$, we find
\begin{equation*}
  \phi_{R}^1[\tilde{\mathcal F}\:|\:\tilde{\mathcal E}\cap \mathcal F\cap \mathcal G] \ge  c^{2\lambda\alpha}.
\end{equation*}
Therefore,
\begin{equation}
  \label{eq:39}
  \phi_R^1[\tilde{\mathcal E}\cap \tilde{\mathcal F}\cap \mathcal G]= \phi_{R}^1[\tilde{\mathcal F}|\tilde{\mathcal E}\cap \mathcal F\cap \mathcal G] \phi_{R}^1[\tilde{\mathcal E}\cap \mathcal F\cap \mathcal G]\ge  c^{2\lambda\alpha} \phi_{R}^1[\tilde{\mathcal E}\cap \mathcal F\cap \mathcal G].  
\end{equation}
Using the same reasoning as in \eqref{eq:24}, we can also prove
\begin{equation}
  \label{eq:40}
 \phi_R^1[\tilde{\mathcal E}\cap \tilde{\mathcal F}\cap \mathcal G]\le \phi^0_{[0,\alpha n]\times[-n/9,2n/9]}[\mathcal H_{[0,\alpha n]\times[0,n/9]}]^\lambda.
\end{equation}
Combining \eqref{eq:38}, \eqref{eq:39} and \eqref{eq:40} and letting $\alpha$ go to infinity, we deduce that, by possibly increasing $C$,  
\begin{equation}
  \label{eq:42}
  p_{n/9}^9\ge \frac{1}{\lambda^C}\ q_{n}^{1+1/\lambda}.
\end{equation}
The second inequality of Lemma~\ref{lem:1} applied to $n/9$ implies that
\begin{equation}
  \label{eq:43}
  q_{n/3}^3\ge \frac 1{\lambda^{(4+1/\lambda)C}} \ q_{n}^{(1+1/\lambda)^2}.
\end{equation}
This concludes the proof when \eqref{eq:28} holds for $n/9$. If on the contrary \eqref{eq:27} holds for $n/3$, it also does for $n$ by \eqref{eq:4} (with a potentially larger constant $C$) so that we may establish
\begin{equation}
  \label{eq:44}
  q_{n/3}^9\ge  \frac 1{\lambda^C} \ p_{3n}^{1+1/\lambda}
\end{equation}
in the same way we proved \eqref{eq:42} (with the appropriate modifications of the rectangles, and working with the dual picture). The first inequality of Lemma~\ref{lem:1}  applied to $n$ implies that \eqref{eq:43} holds in this case as well. 
\end{proof}

\subsection{Proof of Theorem~\ref{thm:main} and Corollary~\ref{cor:z}}
\label{sec:proof_Thm}

We assume \textbf{non(SubCrit)} and  \textbf{non(SupCrit)} and  prove that either \textbf{(ContCrit)} or \textbf{(DiscontCrit)} occur. In fact, the proof of \textbf{(ContCrit)} will also imply Corollary~\ref{cor:z}.

Lemma~\ref{lem:3} implies that, along the geometric subsequence $n=3^i$, we have either
    \begin{itemize}
    \item[(i)] $p_n\le \exp(-c n)$ for every $n$ (for some constant $c>0$ independent of $n$), or
    \item[(ii)] $\inf p_n>0$ for every $n\ge1$.
    \end{itemize}
    To see this, apply first \eqref{eq:37} to (say) $\lambda=20$. This shows that either $p_n$ is uniformly positive, or it decays stretch exponentially fast. Then, apply \eqref{eq:37} with $\lambda=n^2$ to strengthen  the stretched exponential decay into an exponential one.
\bigbreak
In order to conclude the proof, we show that (i) implies  \textbf{(DiscontCrit)} and (ii) implies \textbf{(ContCrit)}.
 
Assume that (i) holds. If $\Lambda_n$ is crossed horizontally and $L$ denotes the length of the largest edge in $\mathbb G$, there exist two vertices in $x\in[-n,-n+L]\times[-n,n]$ and $y\in [n-L,n]\times[-n,n]$ respectively that are connected to each others inside $\Lambda_n$. By the union bound and quasi-transitivity, we have that 
\begin{equation}
  \phi_{\Lambda_{2n}}^0[\mathcal H_{\Lambda_n}]\le cn^2  \phi_{\Lambda_{2n}}^0[x\lr y \text{ in $\Lambda_n$}].
\end{equation}
where $c>0$ is a constant. 
 By quasi-transitivity and finite energy, we can further assume that  $x$ and $y$ are in fact in $\{-n\}\times[-n,n]$ and $\{n\}\times[-n,n]$.
For every integer $k$, let $x_k:=x+(4kn,0)$. Then, using \eqref{eq:4}, symmetry by reflection with respect to the vertical line passing through $y$, and \eqref{eq:FKG}, we deduce that
\begin{equation}
  \phi_{S_{2n}}^0[x\longleftrightarrow x_1]\ge\phi_{\Lambda_{2n}}^0[x\lr y \text{ in $\Lambda_n$}]^2.
\end{equation}
Plugging the two previous displayed equations together, and then using \eqref{eq:FKG} for translates of the event on the left, we deduce that 
\begin{equation}
  \phi_{S_{2n}}^0[x\longleftrightarrow x_k]\ge\Big(\frac 1{cn^2}   \phi_{\Lambda_{2n}}^0[\mathcal H_{\Lambda_n}]\Big)^{2k},
\end{equation}
which, by letting $k$ tend to infinity, leads to the inequality 
\begin{equation}
  p_{2n}^2\ge \frac 1{cn^2}   \phi_{\Lambda_{2n}}^0[\mathcal H_{\Lambda_n}],
\end{equation}
In conclusion, $ \phi_{\Lambda_{2n}}^0[\mathcal H_{\Lambda_n}]$ also decays exponentially fast in $n$.

By Lemma~\ref{lem:1} applied to (say) $\lambda=3$, the sequence $(q_n)$ also decays exponentially fast. The reasoning above applied to the dual model implies that $1-\phi_{\Lambda_{2n}}^1[\mathcal V_{\Lambda_n}]$ decays exponentially fast in $n$. Therefore, \textbf{(DiscontCrit)} holds.

Now, assume (ii). We will show \eqref{eq:zz} which obviously implies {\bf (ContCrit)} and Corollary~\ref{cor:z} at once. By Lemma \ref{lem:1}, we also have $\inf q_n>0$. Fix $\rho>0$. Consider the rectangles
 \begin{align*}
 R&:=[0,\rho n]\times[0,n],\\
 \overline R&:=[-n,\rho n+n]\times[-n,2n],\\
 K&:=[-\tfrac23n,-\tfrac13n]\times[-n,2n],\\
K'&:=[ \rho n+\tfrac n3,\rho n+\tfrac{2n}3]\times[-n,2n].
 \end{align*} 
By \eqref{eq:DMP} and \eqref{eq:4}, we have that for every $\alpha\ge1$, $$\phi_{\overline R}^{\rm mix}[ \mathcal H_{R}]^{\alpha-1}\ge \phi_{S_{n}}^0[ \mathcal H_{[0,(\rho+2)\alpha n]\times[0,n]}],$$
where the mix boundary conditions are given by wired on left, wired on right, and free elsewhere.  Raising to the power $1/\alpha$ and letting $\alpha$ tend to infinity implies
\begin{equation}
  \phi_{\overline R}^{\rm mix}[ \mathcal H_{R}] \ge p_{n}^{\rho+2}. 
\end{equation}
The same reasoning as above with the dual gives
\begin{equation}
  \label{eq:45}
  \phi_{\overline R}^{\rm mix}[ \mathcal H_{K}^c\cap \mathcal H_{K'}^c|\mathcal H_{R}]\ge q_{n/3}^{18}.
\end{equation}
The two equations above imply 
\begin{equation}
  \label{eq:46}
  p_n^{\rho+2}q_{n/3}^{18}\le \phi_{\overline R}^{\rm mix}[ \mathcal H_{R}\cap\mathcal H_{K}^c\cap \mathcal H_{K'}^c ]\le \phi_{\overline R}^{\rm mix}[ \mathcal H_{R}| \mathcal H_{K}^c\cap \mathcal H_{K'}^c ]\le \phi_{\overline R}^0[\mathcal H_{R}],
\end{equation}
where the last inequality follows from \eqref{eq:DMP} and \eqref{eq:4}.

By duality and rotation invariance we also have $\phi_{\overline R}^1[\mathcal H_{R}]\le 1-q_n^{\rho+2}p_{n/3}^{18}.$
Finally,  \eqref{eq:CBC} concludes that for every boundary conditions~$\xi$,
\begin{equation}
  p_{n}^{\rho+2} q_{n/3}^{18}\le  \phi_{\overline R}^\xi [ \mathcal H_{R}] \le 1-q_n^{\rho+2}p_{n/3}^{18} .
\end{equation}
Therefore, (ii) implies \eqref{eq:zz}.

\section{Some simple applications of the properties in Theorem~\ref{thm:main}}

\subsection{Applications of \text{(SubCrit)} and \text{(SupCrit)}}
\label{sec:appl-sub-supcrit}

 In this section, set $0\longleftrightarrow\infty$ for the event that $0$ belongs to an infinite connected component.
\begin{proposition}\label{cor:2}
Assume {\bf (SubCrit)}. There exists $c>0$ such that for every $n\ge1$,
\begin{equation}\label{eq:kj}\phi_{\Lambda_n}^1[0\longleftrightarrow \partial\Lambda_n]\le \exp(-cn).\end{equation}
In particular, $\phi^1[0\longleftrightarrow\infty]=0$ and $\phi^0=\phi^1$.
\end{proposition}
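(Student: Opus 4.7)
The plan is to apply Lemma~\ref{lem:a} to the measure $\phi_{\Lambda_n}^1$ and convert the resulting exponential decay of cluster volumes into the desired exponential bound \eqref{eq:kj}; the two infinite-volume conclusions will then follow from standard arguments.

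First, I would derive exponential decay of connectivities in the infinite-volume measure $\phi^1_\bbG$. By \mon, $\phi_{\Lambda_N}^1[\calH_{\Lambda_n}]$ is non-increasing in $N\ge 2n$, so {\bf (SubCrit)} passes to the weak limit and yields $\phi^1_\bbG[\calH_{\Lambda_n}]\le \exp(-cn)$. Using the translation invariance of $\phi^1_\bbG$ together with the containment $\calH_{[0,\rho n]\times[0,n]}\subset \calH_{[0,n]\times[0,n]}$ for any $\rho\ge 1$, this extends to an exponential bound on $\phi^1_\bbG[\calH_R]$ for every rectangle $R$ of aspect ratio at least~$1$. Theorem~\ref{thm:RSW} applied to $\phi^1_\bbG$ then gives the analogous exponential bound on $\phi^1_\bbG[\calV_R]$. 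Decomposing the annulus $\Lambda_{2k}\setminus\Lambda_k$ into four overlapping arms of aspect ratio $4$ and noting that $\{\Lambda_k\lr\partial\Lambda_{2k}\}$ forces at least one of them to be crossed in its short direction, a union bound yields $\phi^1_\bbG[\Lambda_k\lr\partial\Lambda_{2k}]\le C_0\exp(-c_0 k)$.

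The delicate step, which I expect to be the main obstacle, is to verify the hypothesis of Lemma~\ref{lem:a}: one needs $\phi_{\Lambda_{Ck}}^1[\Lambda_k\lr\partial\Lambda_{2k}]<\delta(C)$ for some $C\ge 2$ and some $k$. Since $\phi_{\Lambda_N}^1[\Lambda_k\lr\partial\Lambda_{2k}]$ decreases in $N$ to $\phi^1_\bbG[\Lambda_k\lr\partial\Lambda_{2k}]$, the previous paragraph shows that the infinite-volume value is exponentially small in $k$. A standard comparison between $\phi_{\Lambda_{Ck}}^1$ and $\phi^1_\bbG$ on the local annulus event---relying on \eqref{eq:DMP}, \eqref{eq:CBC}, and the exponential bound on $\phi^1_\bbG[\Lambda_{2k}\lr\partial\Lambda_{Ck}]$ applied to the larger annulus---controls the difference by an additional exponentially small term, so fixing $C\ge 2$ first (which fixes $\delta(C)>0$) and then taking $k$ sufficiently large yields the hypothesis. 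Lemma~\ref{lem:a} then gives $\phi_{\Lambda_n}^1[\text{volume of the c.c.~of }0\text{ in }\Lambda_{n-Ck}\ge N]\le \exp(-cN)$, and since $\{0\lr\partial\Lambda_n\}$ forces the cluster of $0$ in $\Lambda_{n-Ck}$ to have volume at least of order $n$ (the initial segment of any open path from $0$ to $\partial\Lambda_n$ confined to $\Lambda_{n-Ck}$ has length at least $(n-Ck)/L$, where $L$ is the maximum edge length of $\bbG$), the bound \eqref{eq:kj} follows for $n\ge 2Ck$, after absorbing the finitely many small-$n$ cases by adjusting the constant.

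Finally, \eqref{eq:kj} together with $\phi^1_\bbG[0\lr\partial\Lambda_n]\le \phi_{\Lambda_n}^1[0\lr\partial\Lambda_n]$ (another instance of \mon) gives $\phi^1[0\lr\infty]=\lim_n \phi^1_\bbG[0\lr\partial\Lambda_n]=0$; the equality $\phi^0=\phi^1$ then follows from the classical uniqueness criterion for random-cluster measures with $q\ge 1$, which identifies absence of percolation under $\phi^1$ with uniqueness of the DLR random-cluster measure (see, e.g., \cite{Gri06}).
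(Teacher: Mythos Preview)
Your route through Lemma~\ref{lem:a} is much heavier than what the paper does, and your step~4 contains a genuine gap. The comparison you call ``standard'' would need to bound $\phi_{\Lambda_{Ck}}^1[\Lambda_k\lr\partial\Lambda_{2k}]$ by $\phi_\bbG^1[\Lambda_k\lr\partial\Lambda_{2k}]$ plus an error like $\phi_\bbG^1[\Lambda_{2k}\lr\partial\Lambda_{Ck}]$. But the decoupling argument you sketch (explore a dual circuit in the larger annulus and use \eqref{eq:DMP}/\eqref{eq:CBC}) gives an error of the form $\phi_{\Lambda_{Ck}}^1[\Lambda_{2k}\lr\partial\Lambda_{Ck}]$, a \emph{finite-volume} quantity. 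Since $\phi_{\Lambda_{Ck}}^1$ stochastically dominates $\phi_\bbG^1$, the infinite-volume bound you derived does not control it; you are back to bounding a finite-volume annulus connection, which is exactly the type of input you are trying to establish. Nothing in the toolbox of the paper (RSW is only stated for $\phi_\bbG^\xi$ and strip measures, not for $\phi_{\Lambda_{Ck}}^1$) closes this loop without essentially reproving the proposition by other means.

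The paper's proof bypasses all of this with a two-line \eqref{eq:FKG} argument in finite volume: if $0\lr\partial\Lambda_n$ then $0$ is connected in $\Lambda_n$ to one of the four sides, so by symmetry and \eqref{eq:FKG},
\[
\phi_{\Lambda_{2n}}^1[\calH_{\Lambda_n}]\ \ge\ \phi_{\Lambda_{2n}}^1[0\lr{\rm L}_{\Lambda_n}\text{ in }\Lambda_n]\cdot\phi_{\Lambda_{2n}}^1[0\lr{\rm R}_{\Lambda_n}\text{ in }\Lambda_n]\ \ge\ \tfrac1{16}\,\phi_{\Lambda_{2n}}^1[0\lr\partial\Lambda_n]^2.
\]
Combining with {\bf (SubCrit)} gives $\phi_{\Lambda_{2n}}^1[0\lr\partial\Lambda_n]\le 4e^{-cn/2}$, and then the inclusions $\{0\lr\partial\Lambda_{2n}\}\subset\{0\lr\partial\Lambda_n\}$ together with \mon\ transfer this to $\phi_{\Lambda_m}^1[0\lr\partial\Lambda_m]$ for all $m$. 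Your final paragraph on the infinite-volume consequences is fine and matches the paper, but the core of your argument should be replaced by this direct FKG step rather than the detour through Lemma~\ref{lem:a}.
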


\begin{proof}
By \eqref{eq:FKG}, we obtain that
$$\phi_{\Lambda_{2n}}^1[\calH_{\Lambda_n}]\ge \phi_{\Lambda_{2n}}^1[0\longleftrightarrow{\rm L}_{\Lambda_n}\text{ in }\Lambda_n]\cdot\phi_{\Lambda_{2n}}^1[0\longleftrightarrow{\rm R}_{\Lambda_n}\text{ in }\Lambda_n]\ge\tfrac1{16}\phi_{\Lambda_{2n}}[0\longleftrightarrow\partial\Lambda_n]^2.$$
In the second line, we used the union bound and the fact that, by invariance under rotations, the probability of being connected (in $\Lambda_n$) to the top, left, bottom or right sides is the same. The claim follows readily  since 
$$\phi_{\Lambda_{2n}}^1[0\longleftrightarrow\partial\Lambda_n]\ge \phi_{\Lambda_{2n}}^1[0\longleftrightarrow\partial\Lambda_{2n}]\ge \phi_{\Lambda_{2n+1}}^1[0\longleftrightarrow\partial\Lambda_{2n+1}].$$

By using \eqref{eq:4}, one can deduce that $\phi^1[0\longleftrightarrow\partial\Lambda_n]\le \exp(-cn)$. Letting $n$ go to infinity implies that $\phi^1[0\longleftrightarrow\infty]=0$. It is classical that the absence of infinite connected component implies $\phi^1=\phi^0$ (one can see it as a simple application of \eqref{eq:DMP} and \eqref{eq:4} that we leave as an interesting exercise; or see \cite{Gri06}). \end{proof}

\begin{proposition}\label{cor:3}
Assume {\bf (SupCrit)}. There exists $c>0$ such that for every $n\ge1$,
$$\phi_{\Lambda_n}^0[\Lambda_n\not\longleftrightarrow\infty]\le \exp(-cn).$$
In particular, $\phi^0[0\longleftrightarrow\infty]>0$ and $\phi^0=\phi^1$.
\end{proposition}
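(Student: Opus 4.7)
The plan is to prove this as the dual analogue of Proposition~\ref{cor:2}. The backbone is the duality discussed in Section~\ref{sec:duality}: \textbf{(SupCrit)} for the primal is equivalent to \textbf{(SubCrit)} for the dual, because the exponential bound $\phi_{\Lambda_{2n}}^0[\calH_{\Lambda_n}]\ge 1-e^{-cn}$ translates, after dualizing and using the $\pi/2$-rotation invariance of $\bbG$, into $\phi^{*,1}_{\Lambda^*_{2n}}[\calH^*_{\Lambda_n^*}]\le e^{-cn}$. Hence Proposition~\ref{cor:2} applied to the dual random-cluster model yields, by translation invariance, a constant $c'>0$ such that $\phi^{*,1}[u^*\longleftrightarrow u^*+\partial\Lambda_m^*]\le e^{-c'm}$ for every dual vertex $u^*$ and every $m\ge 1$.

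Next I would bound $\{\Lambda_n\not\longleftrightarrow\infty\}$. Under the infinite-volume measure $\phi^0$, this event is equivalent to the existence of an open dual circuit surrounding $\Lambda_n$. I would decompose by the maximum $L^\infty$ distance $k\ge n$ of a vertex on such a circuit: any circuit around $\Lambda_n$ passing through a dual vertex $u^*$ at distance exactly $k$ must also contain a dual vertex on the opposite side of $\Lambda_n$ at distance $\ge n$ from the origin, so the dual cluster of $u^*$ has $L^\infty$ diameter at least $k+n$. Applying the dual exponential estimate above and a union bound over the $O(k)$ candidate dual vertices at each distance $k$, summation in $k\ge n$ yields $\phi^0[\Lambda_n\not\longleftrightarrow\infty]\le \sum_{k\ge n} Ck\,e^{-c'(k+n)/2}\le e^{-cn}$, which is the desired bound (small values of $n$ are absorbed by enlarging the constant, and the literal finite-volume reading $\phi^0_{\Lambda_n}$ follows from \mon, since free boundary conditions are the worst for connectivity).

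For the ``in particular'' claims, translation invariance of $\phi^0$ (Section~\ref{sec:0}) gives $\phi^0[\Lambda_n\longleftrightarrow\infty]\le |\Lambda_n|\,\phi^0[0\longleftrightarrow\infty]$, and since the left-hand side tends to $1$ by the first part, we deduce $\phi^0[0\longleftrightarrow\infty]>0$. For $\phi^0=\phi^1$, the dual model is in \textbf{(SubCrit)}, so the second statement of Proposition~\ref{cor:2} yields $\phi^{*,0}=\phi^{*,1}$; since the duality of Section~\ref{sec:duality} on DLR measures (swapping wired and free boundary conditions) is a bijection, this forces $\phi^0=\phi^1$.

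The main obstacle is the geometric counting in the second step: showing that any dual circuit around $\Lambda_n$ forces a nearby dual vertex to sit in a cluster of diameter at least $k+n$, and carefully arranging the union bound so that the resulting sum remains exponentially small in $n$ despite the polynomially many candidate dual vertices at each scale. Everything else is either a direct dualization of results already established for the subcritical case, or a classical consequence of translation invariance.
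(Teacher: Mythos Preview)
Your proof is correct and follows essentially the same approach as the paper: dualize to reduce \textbf{(SupCrit)} to \textbf{(SubCrit)} for the dual model, invoke Proposition~\ref{cor:2} there, and then observe that $\{\Lambda_n\not\longleftrightarrow\infty\}$ forces an open dual circuit surrounding $\Lambda_n$, whose length/diameter is at least of order $n$, so the exponential decay \eqref{eq:kj} yields the bound after a union bound. The paper's own proof is extremely terse (``This circuit has length at least $n$ so that one can use \eqref{eq:kj} to conclude''), whereas you spell out the decomposition over the maximal distance $k$ of a vertex on the circuit and the resulting geometric sum; this is a legitimate and careful way to fill in that step, not a different strategy.
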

\begin{proof}
Recall that {\bf (SupCrit)} holds if and only if {\bf (SubCrit)} holds in the dual measure (in particular $\phi^1=\phi^0$). Since for $\Lambda_n$ not to be connected to a distance $n$, there must exist an open circuit in the dual configuration $\omega^*$ surrounding $\Lambda_n$. This circuit has length at least $n$ so that one can use \eqref{eq:kj} to conclude.
\end{proof}

\subsection{Applications of \textbf{(ContCrit)}}

\begin{corollary}[One-arm polynomial bound]\label{cor:a}
Assume {\bf (ContCrit)}. There exists $c>0$ such that for every $n\ge1$,
$$\frac{c}{n}\le \phi^1[0\longleftrightarrow\partial\Lambda_n]\le \frac1{n^c}.$$
In particular, $\phi^1[0\longleftrightarrow\infty]=0$ and $\phi^1=\phi^0$.
\end{corollary}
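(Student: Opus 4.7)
The plan is to prove the two bounds separately: the lower bound $c/n$ by a one-step averaging argument on a single vertical crossing, and the upper bound $n^{-c}$ by a dyadic renormalization based on blocking dual circuits. Both ``in particular'' claims then follow instantly.

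For the lower bound, the strategy is to produce $\sim n$ candidate one-arm attempts from a single crossing. Apply Corollary~\ref{cor:z} with $\rho=1$ to $R:=[-n,n]\times[0,n]$ inside $\overline R:=[-2n,2n]\times[-n,2n]$: under any boundary conditions $\xi$ on $\overline R$, $\phi^\xi_{\overline R}[\calV_R]\ge c_0$. Push this to infinite volume by conditioning $\phi^1_{\Lambda_N}$ on the configuration outside $\overline R$, using \eqref{eq:DMP} and \mon, and taking $N\to\infty$: this gives $\phi^1[\calV_R]\ge c_0$. On $\calV_R$, at least one $x\in[-n,n]$ satisfies $(x,0)\lr{\rm T}_R$ inside $R$, and such a connection forces $(x,0)\lr \partial\Lambda_n((x,0))$ in $\bbG$. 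By translation invariance of $\phi^1$ each such probability equals $\phi^1[0\lr\partial\Lambda_n]$, and the union bound yields
\begin{equation*}
  c_0\ \le\ \phi^1[\calV_R]\ \le\ \sum_{x\in[-n,n]}\phi^1\big[(x,0)\lr{\rm T}_R\text{ in }R\big]\ \le\ (2n+1)\,\phi^1[0\lr\partial\Lambda_n].
\end{equation*}

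For the upper bound, set $A_k:=\Lambda_{2^k}\setminus\Lambda_{2^{k-1}}$ for $1\le k\le K:=\lfloor\log_2 n\rfloor$ and let $\calD_k$ be the event that $A_k$ contains a dual open circuit surrounding $\Lambda_{2^{k-1}}$. Occurrence of any $\calD_k$ disconnects $0$ from $\partial\Lambda_n$, so $\phi^1[0\lr\partial\Lambda_n]\le \phi^1[\bigcap_{k=1}^K\calD_k^c]$. The heart of the argument is the uniform estimate
\begin{equation*}
  \phi^1\big[\calD_k\mid \calF_{A_k^c}\big]\ \ge\ c_1\quad\text{uniformly in $k$ and in the outside configuration,}
\end{equation*}
where $\calF_{A_k^c}$ is the $\sigma$-algebra generated by edges outside $A_k$. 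I would realize $\calD_k$ as the intersection of four dual crossings of sub-rectangles of $A_k$ (top, bottom, left, right) of fixed aspect ratio. Each such dual crossing is a primal decreasing event with probability at least a universal $c>0$ by Corollary~\ref{cor:z}, applied on a slightly larger enclosing rectangle still contained in $A_k$ so that \eqref{eq:DMP} and \mon ~absorb the unknown induced boundary conditions $\xi$. The FKG inequality for decreasing events combines these into $c_1:=c^4>0$. Since $\calD_1,\dots,\calD_{k-1}$ live inside $\Lambda_{2^{k-1}}\subset A_k^c$, they are $\calF_{A_k^c}$-measurable, so the tower property applied from the outermost annulus inward gives
\begin{equation*}
  \phi^1\Big[\bigcap_{k=1}^K\calD_k^c\Big]\ \le\ (1-c_1)^K\ \le\ n^{-c},\qquad c:=-\log_2(1-c_1)>0.
\end{equation*}

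Letting $n\to\infty$ in the upper bound gives $\phi^1[0\lr\infty]=0$, and $\phi^0=\phi^1$ then follows from the standard argument (as at the end of Proposition~\ref{cor:2}) that the absence of an infinite cluster forces coincidence of the extremal DLR measures. The main delicate point is the uniform-in-$\xi$ lower bound $\phi^\xi_{A_k}[\calD_k]\ge c_1$: Corollary~\ref{cor:z} is phrased with a specific enclosing rectangle, so for each of the four sub-rectangles one must choose a strictly larger enclosing rectangle still inside $A_k$ in order to invoke the corollary via \eqref{eq:DMP} and \mon. The geometry of the annulus makes this routine, but this is where the uniformity of the bound is actually earned.
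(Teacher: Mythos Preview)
Your proof is correct and follows essentially the same approach as the paper: a union bound over $O(n)$ candidate starting vertices for the lower bound, and a multiscale annulus argument (your dual circuits built from four long dual crossings are exactly the complement of the paper's four short primal crossings) for the upper bound, iterated via \eqref{eq:DMP} and \mon. One minor imprecision worth flagging: since $\bbG$ is only $\bbZ^2$-periodic rather than vertex-transitive, the one-arm probabilities from the various vertices $(x,0)$ on the bottom of $R$ are not literally equal to $\phi^1[0\lr\partial\Lambda_n]$ but only uniformly comparable via quasi-transitivity and finite energy (and there are $O(n)$ rather than exactly $2n+1$ of them), which is precisely how the paper phrases the same step.
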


\begin{proof}
For
the lower bound, notice that for $\Lambda_{n}$ to be crossed horizontally, there must exist a vertex $x\in [-L,L]\times[-n,n]$  connected to a distance $n-L$ (where $L$ is the length of the longest edge in the graph). Thus, the union bound and the finite energy property (to relate the probability of neighboring vertices being connected to infinity) give
\begin{equation*}Cn\cdot\phi^1[0\longleftrightarrow\partial\Lambda_n]\ge \phi^1[\calH_{\Lambda_n}]\ge c_1\end{equation*}
where the constant $c_1>0$ (independent of $n$) is given by {\bf (ContCrit)}.

For the upper bound, we proceed as follows. If $\Lambda_n$ is connected to $\partial\Lambda_{4n}$, then one of the four rotated versions of $\calV_{[-3n,3n]\times[2n,3n]}$ must also occur (where the angles of the rotation are $\tfrac\pi2k$ with $0\le k\le 3$). Therefore,$$\phi^1_{\Lambda_{4n}\setminus\Lambda_n}\left[\partial\Lambda_n\longleftrightarrow \partial\Lambda_{4n}\right]\stackrel{\eqref{eq:FKG}}\le 1-(1-\phi^1_{\Lambda_{4n}\setminus\Lambda_n}[\calV_{[-3n,3n]\times[2n,3n]}])^4\le1- c_0,$$
where $c_0>0$ is given by Corollary~\ref{cor:z}. 
Successive applications of \eqref{eq:DMP} and \eqref{eq:4} imply the existence of $c>0$ such that 
\begin{equation}\label{eq:sup pol}\phi^1\left[0\longleftrightarrow\partial\Lambda_n\right]\le \prod_{4^k\le n}\phi^1_{\Lambda_{4^{k}}\setminus\Lambda_{4^{k-1}}}\left[\Lambda_{4^{k-1}}\longleftrightarrow \partial\Lambda_{4^k}\right]\le (1-c_0)^{\lfloor\log_4 n\rfloor}\le n^{-c},\end{equation}
which gives the right-hand side.

Letting $n$ go to infinity implies $\phi^1[0\longleftrightarrow\infty]=0$, which in turn implies $\phi^1=\phi^0$.
\end{proof}

\subsection{Proof of Corollary~\ref{cor:main}}

The proof will essentially consist in gathering known facts from this article and existing results.
\begin{itemize}
\item Corollary~\ref{cor:2} shows that \textbf{(SubCrit)} implies that $\phi^1[0\longleftrightarrow\infty]=0$. In particular, $p\le p_c(q)$. Conversely, it is now known in different ways that for $p<p_c(q)$, the probability of being connected to a  distance $n$ decays exponentially fast; see e.g.~\cite{BefDum12,DM,DumRaoTas17}. This fact implies that for $p<p_c(q)$, \textbf{(SubCrit)} holds.
Finally, Corollary~\ref{cor:1} shows that there exists $c>0$ such that \textbf{(SubCrit)} is equivalent to the assertion that there exists $n\ge1$ such that $\phi^1_{\Lambda_{2n}}[\mathcal H_n]<c$. Furthermore, by tracking the constants in the proofs, one easily sees that they can be taken to be continuous functions of $q$. This implies that the set of $(p,q)$ for which \textbf{(SubCrit)} occurs is an open subset of $\{(p,q):p\le p_c(q)\}$. Therefore, one must have that $q\mapsto p_c(q)$ is lower semi-continuous and that  $$\{(p,q):\text{\textbf{(SubCrit)}}\}=\{(p,q):p< p_c(q)\}.$$ 
\item By duality, $q\mapsto p_c(q)$ is upper semi-continuous and 
$$\{(p,q):\text{\textbf{(SupCrit)}}\}=\{(p,q):p> p_c(q)\}.$$
\item This shows that 
$$\{(p,q):\text{\textbf{(ContCrit)} or \textbf{(DiscontCrit)}}\}=\{(p,q):p= p_c(q)\}.$$
To conclude, observe that the proof of Theorem~\ref{thm:main} shows that the assertion {\em ``there exists $c=c(q)>0$ such that $(p_n)$ decays exponentially fast along the sequence $n_i=3^i$''} is equivalent to the assertion {\em ``there exists $n$ such that $p_n<c$''}. Keeping track of the dependency of $c$ on $q$, one can easily show that $c=c(q)$ can be taken to be a continuous function of $q$. This implies that the set of $(p,q)$ for which \textbf{(DiscontCrit)} occurs is an open subset of $\{(p,q):p= p_c(q)\}$.\end{itemize}

\begin{remark}
One can avoid using \cite{BefDum12,DM,DumRaoTas17} to conclude that {\bf (SubCrit)} and {\bf (SupCrit)} occur for $p<p_c(q)$ and $p>p_c(q)$ respectively. The fact that \textbf{(DiscontCrit)} implies $\phi^0\ne\phi^1$ forces $p$ to be equal to $p_c(q)$ (since for $p\ne p_c(q)$ the DLR-random-cluster measure is unique). It would therefore be sufficient to prove that \textbf{(ContCrit)} cannot hold for a non-trivial interval $[p_0,p_1]$ of values of $p$. This can be derived using sharp threshold theorems. Indeed, by Corollary~\ref{cor:a}, the probability to be connected to a distance $n$ decays at least polynomially. This classically implies that the influence (see \cite{Gri06} for a definition of the notion) for the event $\mathcal H_{\Lambda_n}$ is polynomially small. A use of a random-cluster version of the BKKKL result on influences, see \cite{GraGri}, enables one to show that the sequence of functions $p\mapsto \phi^1_{\Lambda_{2n},p}[\mathcal H_{\Lambda_n}]$ would undergo a sharp threshold, which would be contradictory with the fact that they remain bounded if they satisfy \textbf{(ContCrit)} for every $p$ in $[p_0,p_1]$. 
\end{remark}

\paragraph{Acknowledgments} 
We are grateful to  A.~Raoufi for valuable and enjoyable discussions during the project. This project was initiated during a visit at Princeton University in 2015. We thank the institution for the hospitality. The research of HDC is supported by an IDEX grant from Paris-Saclay and the ERC CriBLaM. The research of HDC and VT is supported by NCCR SwissMAP, funded by the Swiss NSF.

\newcommand{\etalchar}[1]{$^{#1}$}

\end{document}